\documentclass[10pt,reqno]{amsart}
\usepackage{amssymb,mathrsfs,graphicx}
\usepackage{ifthen}
\usepackage{caption}
\usepackage{sidecap}
\usepackage{rotating}

\topmargin-0.1in \textwidth5.9in \textheight8.5in \oddsidemargin0.3in
\evensidemargin0.3in

\usepackage{colortbl}
\definecolor{black}{rgb}{0.0, 0.0, 0.0}
\definecolor{red}{rgb}{1.0, 0.5, 0.5}
\provideboolean{shownotes} 
\setboolean{shownotes}{true} 
%
\newcommand{\margnote}[1]{
\ifthenelse{\boolean{shownotes}}%
{\marginpar{\raggedright\tiny\texttt{#1}}}%
{}%
}
\newcommand{\hole}[1]{
\ifthenelse{\boolean{shownotes}}%
{\begin{center} \fbox{ \rule {.25cm}{0cm} \rule[-.1cm]{0cm}{.4cm}
\parbox{.85\textwidth}{\begin{center} \texttt{#1}\end{center}} \rule
{.25cm}{0cm}}\end{center}} {} }

\graphicspath{{pics/}}


\title[Quantitative error estimates for the large friction limit]{Quantitative error estimates for the large friction limit of Vlasov equation with nonlocal forces}

\author[Carrillo]{Jos\'{e} A. Carrillo}
\address[Jos\'{e} A. Carrillo]{\newline Department of Mathematics
    \newline Imperial College London, London SW7 2AZ, United Kingdom}
\email{carrillo@imperial.ac.uk}

\author[Choi]{Young-Pil Choi}
\address[Young-Pil Choi]{\newline Department of Mathematics\newline
Yonsei University, Seoul 03722, Republic of Korea}
\email{ypchoi@yonsei.ac.kr}

\numberwithin{equation}{section}

\newtheorem{theorem}{Theorem}[section]
\newtheorem{lemma}{Lemma}[section]

\newtheorem{proposition}{Proposition}[section]
\newtheorem{remark}{Remark}[section]
\newtheorem{definition}{Definition}[section]

\newcommand{\R}{\mathbb R}

\newcommand{\ls}{\lesssim}

\newcommand{\mc}{\mathcal C}

\newcommand{\bq}{\begin{equation}}
\newcommand{\eq}{\end{equation}}
\newcommand{\e}{\varepsilon}
\newcommand{\lt}{\left}
\newcommand{\rt}{\right}
\newcommand{\lal}{\langle}
\newcommand{\ral}{\rangle}
\newcommand{\pa}{\partial}

\newcommand{\mh}{\mathcal{H}}

\newcommand{\mf}{\mathcal{F}}
\newcommand{\W}{\mathcal{W}}
\newcommand{\md}{\mathcal{D}}
\newcommand{\mt}{\mathcal{T}}

\begin{document}
\allowdisplaybreaks

\date{\today}

\subjclass[2010]{primary 35Q70, 35Q83; secondary 35B25, 35Q35, 35Q92. }


\keywords{hydrodynamic limit, large friction limit, relative entropy, pressureless Euler system, Wasserstein distance, aggregation equation, kinetic swarming models.}

\begin{abstract} We study an asymptotic limit of Vlasov type equation with nonlocal interaction forces where the friction terms are dominant. We provide a quantitative estimate of this large friction limit from the kinetic equation to a continuity type equation with a nonlocal velocity field, the so-called aggregation equation, by employing $2$-Wasserstein distance. By introducing an intermediate system, given by the pressureless Euler equations with nonlocal forces, we can quantify the error between the spatial densities of the kinetic equation and the pressureless Euler system by means of relative entropy type arguments combined with the $2$-Wasserstein distance. This together with the quantitative error estimate between the pressureless Euler system and the aggregation equation in $2$-Wasserstein distance in [Commun. Math. Phys, 365, (2019), 329--361] establishes the quantitative bounds on the error between the kinetic equation and the aggregation equation.

\end{abstract}

\maketitle \centerline{\date}


%
%
%
%
\section{Introduction}
Let $f = f(x,v,t)$ be the particle distribution function at $(x,v) \in \R^d \times \R^d$ and at time $t \in \R_+$ for the following kinetic equation:
\bq\label{main_eq}
\pa_t f + v\cdot\nabla_x f -  \nabla_v \cdot \lt((\gamma v + \lambda\lt(\nabla_x V + \nabla_x W \star \rho\rt))f \rt) = \nabla_v \cdot (\beta (v - u)f) 
\eq
subject to the initial data
\[
f(x,v,t)|_{t=0} =: f_0(x,v), \quad (x,v) \in \R^d \times \R^d,
\]
where $u$ is the local particle velocity, i.e., 
$$
u = \frac{ \int_{\R^d} vf\,dv} {\rho}\quad \mbox{ with } \rho := \int_{\R^d} f\,dv\,,
$$
$V$ and $W$ are the confinement and the interaction potentials, respectively. In \eqref{main_eq}, the first two terms take into account the free transport of the particles, and the third term consists of linear damping with a strength $\gamma>0$ and the particle confinement and interaction forces in position due to the potentials with strength $\lambda>0$. The right hand side of \eqref{main_eq} is the local alignment force for particles as introduced in \cite{KMT13} for swarming models. In fact, it can also be understood as the localized version of the nonlinear damping term introduced in \cite{MT} as a suitable normalization of the Cucker-Smale model \cite{CS}.
Notice that this alignment term is also a nonlinear damping relaxation towards the local velocity used in classical kinetic theory \cite{CIP,Vilkinetic}. Throughout this paper, we assume that $f$ is a probability density, i.e., $\|f(\cdot,\cdot,t)\|_{L^1} = 1$ for $t\geq 0$, since the total mass is preserved in time. 

In the current work, we are interested in the asymptotic analysis of \eqref{main_eq} when considering singular parameters. More specifically, we deal with the large friction limit to a continuity type equation from the kinetic equation \eqref{main_eq} when the parameters $\gamma, \lambda > 0$, and $\beta > 0$ get large enough. Computing the moments on the kinetic equation \eqref{main_eq}, we find that the local density $\rho$ and local velocity  $u$ satisfy
$$\begin{aligned}
&\pa_t \rho + \nabla_x \cdot (\rho u) = 0,\cr
&\pa_t (\rho u) + \nabla_x \cdot (\rho u \otimes u) + \nabla_x \cdot \lt( \int_{\R^d} (v-u)\otimes (v-u) f(x,v,t)\,dv\rt) \cr
&\hspace{2.5cm} = - \gamma \rho u - \lambda\rho(\nabla_x V + \nabla_x W \star \rho).
\end{aligned}$$
As usual, the moment system is not closed. By letting the friction of the equation \eqref{main_eq} very strong, i.e., $\gamma, \lambda, \beta  \gg 1$, for  instance, $\gamma=\lambda=\beta=o\left(\e^{-1}\right) \to + \infty$ with $\lambda/\gamma=o(1)\to \kappa>0$ as $\e \to 0$, then at the formal level, we find
$$
 \nabla_v \cdot \lt((2v -u + \kappa \lt(\nabla_x V + \nabla_x W \star \rho\rt))f \rt) =0\,,
$$
and thus,
\[
f(x,v,t) \simeq \rho(x,t) \otimes \delta_{v - u(x,t)} \quad \mbox{and} \quad \rho u \simeq -\kappa\rho(\nabla_x V + \nabla_x W \star \rho) \quad \mbox{for} \quad \e \ll 1
\]
is the element in its kernel with the initial monokinetic distribution $\rho(x,0) \otimes \delta_{v - u(x,0)}$.

Those relations provide that the density $\rho$ satisfies the following continuity type equation with a nonlocal velocity field, the so-called {\it aggregation equation}, see for instance \cite{BCL,BLR,CCH}
and the references therein,
\bq\label{main_conti}
\pa_t \rho + \nabla_x \cdot (\rho u) = 0, \quad \rho u = -\kappa\rho\lt(\nabla_x V + \nabla_x W \star \rho\rt).
\eq

The large friction limit has been considered in \cite{Jabin00}, where the macroscopic limit of a Vlasov type equation with friction is studied by using a PDE approach, and later the restrictions on the functional spaces for the solutions and the conditions for interaction potentials are relaxed in \cite{FS15} by employing PDE analysis and the method of characteristics. More recently, these results have been extended  in \cite{FST16} for more general Vlasov type equations; Vlasov type equations with nonlocal interaction and nonlocal velocity alignment forces. However, all of these results in \cite{FS15,FST16,Jabin00} are based on compactness arguments, and to our best knowledge, quantitative estimates for the large friction limit have not yet been obtained. The large friction limit has received a lot of attention at the hydrodynamic level by the conservation laws community, see for instance \cite{CG,MM,LC,GLT,LT}, but due to their inherent difficulties, it has been elusive at the kinetic level.

The main purpose of this work is to render the above formal limit to the nonlocal aggregation equation completely rigorous with quantitative bounds. Our strategy of the proof uses an intermediate system to divide the error estimates as depicted in Figure \ref{schemeofproof}. 
\begin{figure}[!ht]
\centering
\includegraphics[width=1\textwidth,clip]{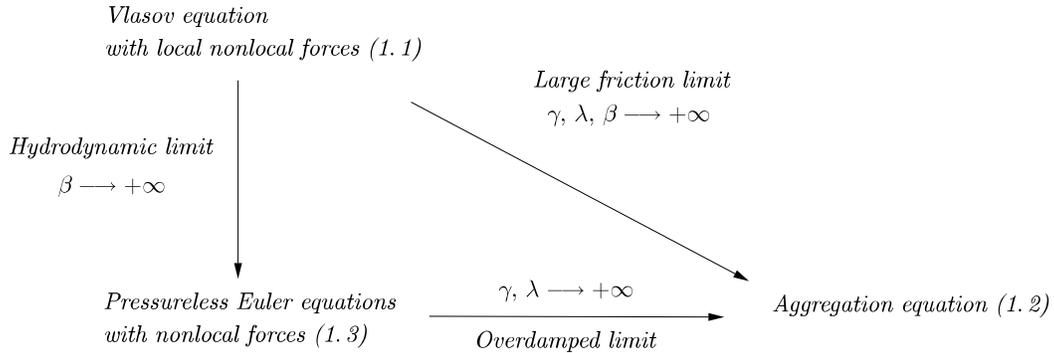}
\caption{
Schematic illustration of the strategy of the proof.
}
\label{schemeofproof}
\end{figure}
We first fix $\lambda$ and $\gamma$ with $\kappa \gamma = \lambda$ and take $\beta = 1/\e$. We denote by $f^{\gamma,\e}$ the solution to the associated kinetic equation \eqref{main_eq}. We then introduce an intermediate system, given by the pressureless Euler equations with nonlocal interactions, between the kinetic equation \eqref{main_eq} and the limiting equation \eqref{main_conti}:
\begin{align}\label{main_eq2}
\begin{aligned}
&\pa_t \rho^{\gamma} + \nabla_x \cdot (\rho^{\gamma} u^{\gamma}) = 0,\cr
&\pa_t (\rho^{\gamma} u^{\gamma}) + \nabla_x \cdot (\rho^{\gamma} u^{\gamma} \otimes u^{\gamma}) =  - \gamma \lt(\rho^{\gamma} u^{\gamma} + \kappa\rho^{\gamma}(\nabla_x V + \nabla_x W \star \rho^{\gamma})\rt).
\end{aligned}
\end{align}
In order to estimate the error between two solutions $\rho^{\gamma,\e}$ and $\rho^{\gamma}$ to \eqref{main_eq} and \eqref{main_eq2}, respectively, where 
$$
\rho^{\gamma,\e} := \int_{\R^d} f^{\gamma,\e}\,dv\,,
$$
we use the Wasserstein distance which is defined by
\[
W_p(\mu, \nu):= \inf_{\pi \in \Pi(\mu,\nu)}\lt(\int_{\R^d \times \R^d} |x - y|^p d\pi(x,y) \rt)^{1/p}
\]
for $p\geq 1$ and $\mu,\nu \in \mathcal{P}_p(\R^d)$, where $\Pi(\mu,\nu)$ is the set of all probability measures on $\R^d \times \R^d$ with first and second marginals $\mu$ and $\nu$ and bounded $p$-moments, respectively. We refer to \cite{AGS08,Vil08} for discussions of various topics related to the Wasserstein distance.

Employing the $2$-Wasserstein distance, we first obtain the quantitative estimate for $W_2^2(\rho^{\gamma,\e},\rho^{\gamma})$ with the aid of the relative entropy argument. It is worth mentioning that the entropy for the system \eqref{main_eq2} is not strictly convex with respect to $\rho$ due to the absence of pressure in the system, see Section \ref{sec_re} for more details. Thus the relative entropy estimate is not enough to provide the error estimates between the spatial density $\rho^{\gamma,\e}$ and the density $\rho^{\gamma}$. We also want to emphasize that the relative entropy estimate is even not closed due to the nonlinearity and nonlocality of the interaction term $\nabla_x W \star \rho$. We provide a new inequality which gives a remarkable relation between the $2$-Wasserstein distance and the relative entropy, see Lemma \ref{prop_rho_wa}. Using that new observation together with combining the relative entropy estimate and the $2$-Wasserstein distance between the solutions in a hypocoercivity type argument, we have the quantitative error estimate for the vertical part of the diagram in Figure \ref{schemeofproof}. Let us point out that in order to make this step rigorous, we need to work with strong solutions to the pressureless Euler system \eqref{main_eq2} for two reasons. On one hand, strong solutions are needed for making sense of the integration by parts required for the relative entropy argument. On the other hand, some regularity on the velocity field, the boundedness of the spatial derivatives of the velocity field uniformly in $\gamma$, is needed in order to control terms appearing due to the time derivatives of $W_2^2(\rho^{\gamma,\e},\rho^{\gamma})$ and the relative entropy.

We finally remark that the closest result in the literature to ours is due to Figalli and Kang in \cite{FK19}. It concerns with the vertical part of the diagram in Figure \ref{schemeofproof} for a related system without interaction forces but Cucker-Smale alignment terms. Even if they already combined the $2$-Wasserstein distance and the relative entropy between $\rho^{\gamma,\e}$ and $\rho^{\gamma}$, they did not take full advantage of the  $2$-Wasserstein distance, see Remark \ref{rmk_hydro} for more details. This is our main contribution in this step.

The final step, corresponding to the bottom part of the diagram in Figure \ref{schemeofproof}, is inspired on a recent work of part of the authors \cite{CCTpre}. Actually, we can estimate the error between the solutions $\rho^{\gamma}$ and $\rho$ to \eqref{main_eq2} and \eqref{main_conti}, respectively, in the $2$-Wasserstein distance again. Here, it is again crucial to use the boundedness of the spatial derivatives of the velocity field uniformly in $\gamma$. Combining the above arguments, we finally conclude the main result of our work: the quantitative error estimate between two solutions $\rho^{\gamma,\e}$ and $\rho$ to the equations \eqref{main_eq} and \eqref{main_conti}, respectively, in the $2$-Wasserstein distance.

Before writing our main result, we remind the reader of a well known estimate for the total energy of the kinetic equation \eqref{main_eq}. For this, we define the total energy $\mf$ and the associated dissipations $\md_1$  and $\md_2$ as follows: 
\begin{equation}\label{freen}
\mf(f) := \frac12\int_{\R^d \times \R^d} |v|^2 f\,dxdv + \frac{\lambda}{2}\int_{\R^d \times \R^d} W(x-y)\rho(x) \rho(y)\,dxdy + \lambda \int_{\R^d} V \rho\,dx, 
\end{equation}
$$\begin{aligned}
\md_1(f) &:= \int_{\R^d \times \R^d} f\lt|u-v\rt|^2 dxdv, \quad \mbox{and} \quad \md_2(f):= \int_{\R^d \times \R^d} |v|^2 f\,dxdv,
\end{aligned}$$
respectively. Suppose that $f$ is a solution of \eqref{main_eq} with sufficient integrability, then it is straightforward to check that
\begin{equation}\label{lem_energy}
\frac{d}{dt}\mf(f) + \beta D_1(f) + \gamma D_2(f) = 0.
\end{equation}
Notice that weak solutions may only satisfy an inequality in the above relation that is enough for our purposes. 

In order to control the velocity field for the intermediate pressureless Euler equations \eqref{main_eq2}, we assume that the confinement potential $V$ and the interaction potential $W$ satisfy:
\begin{itemize}
\item[{\bf (H)}] The confinement potential $V(x) = c_V|x|^2/2$, and the interaction potential $W$ satisfies $W(-x) = W(x)$, $\nabla_x W \in (\W^{1,\infty} \cap \W^{[d/2]+1,\infty})(\R^d)$, and $c_V + c_W > 0$ with 
\[
c_W := \inf_{x \neq y} \frac{\lt\lal x-y, \nabla_x W(x) - \nabla_y W(y) \rt\ral}{|x-y|^2}. 
\]
\end{itemize}
We are now in position to state the main result of this work. 

\begin{theorem}\label{thm_main} Assume that initial data $f_0^\e$ satisfy
\[
\sup_{\e > 0}\|(1+ |v|^2 + V)f_0^\e\|_{L^1} < \infty, \quad  f_0^\e \in L^\infty(\R^d \times \R^d), \quad \mbox{and} \quad \rho^\e_0(W\star \rho^\e_0) \in L^1(\R^d)
\]
for all $\e > 0$. Let $f^\e$ be a solution to the equation \eqref{main_eq} with $\beta = 1/\e$, $\kappa\gamma = \lambda =1/\e$ with $\kappa>0$ up to time $T>0$, such that $f^\e \in L^\infty(0,T; (L^1\cap L^\infty)(\R^d \times \R^d))$ satisfying the energy inequality \eqref{lem_energy} with initial data $f_0^\e$. Let $\rho$ be a solution of \eqref{main_conti} up to the time $T$, such that $\rho \in \mc([0,T];\mathcal{P}_2(\R^d))$ with initial data $\rho_0$ satisfying
\[
\rho_0 \in \mathcal{P}_2(\R^d) \quad \mbox{and} \quad \int_{\R^d} \lt(|u_0|^2 + V+W\star \rho_0\rt)\rho_0\,dx < \infty.
\] 
Suppose that ${\bf (H)}$ holds. Then, for $\e,\kappa > 0$ small enough, we have the following quantitative bound:
\[
\int_0^T W_2^2(\rho^\e(t), \rho(t))\,dt \leq \mathcal{O}(\e) + CW_2^2(\rho^\e_0,\rho_0),
\]
where $\rho^\e = \int_{\R^d} f^\e\,dv$ and $C > 0$ is independent of $\e$.
\end{theorem}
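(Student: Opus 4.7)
The plan is to use the intermediate pressureless Euler system \eqref{main_eq2} and the triangle inequality
\[
W_2(\rho^\e(t),\rho(t)) \leq W_2(\rho^\e(t),\rho^\gamma(t)) + W_2(\rho^\gamma(t),\rho(t)),
\]
so that the task splits into the vertical and horizontal legs of the diagram in Figure \ref{schemeofproof}. With the calibration $\gamma = 1/(\kappa\e)$, both legs should contribute $\mathcal{O}(\e)$ after squaring and integrating in time. Assumption {\bf (H)}, through the effective convexity $c_V + c_W > 0$, is what will allow \eqref{main_eq2} to admit a sufficiently regular solution $(\rho^\gamma,u^\gamma)$ with bounds on $\|\nabla_x u^\gamma\|_{L^\infty}$ and on the second moment of $\rho^\gamma$ uniform in $\gamma$; these uniform bounds are used in both legs.

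For the vertical step, the natural object is a modulated kinetic energy $\mh(f^\e \mid \rho^\gamma, u^\gamma)$ assembled from the kinetic energy of the fluctuation $v - u^\gamma$ and the nonlocal interaction energy associated with $\rho^\e - \rho^\gamma$. Differentiating $\mh$ along \eqref{main_eq} and \eqref{main_eq2}, exploiting the energy inequality \eqref{lem_energy}, and using the local alignment dissipation $\beta\md_1(f^\e)$ to absorb the quadratic cross-terms is the standard relative entropy recipe. Two obstructions prevent this from closing: the absence of pressure in \eqref{main_eq2} deprives $\mh$ of coercivity in $\rho^\e-\rho^\gamma$, and the nonlocal term $\nabla_x W\star(\rho^\e-\rho^\gamma)$ produces residuals not controlled by $\mh$ alone. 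The remedy, as flagged in the introduction, is to study the bundled functional $\mh(f^\e \mid \rho^\gamma, u^\gamma) + \tfrac{\gamma}{2} W_2^2(\rho^\e,\rho^\gamma)$; Lemma \ref{prop_rho_wa} then provides the bridge inequality that lets these two quantities dominate each other in opposite directions. The time derivative of $W_2^2$ is handled via an optimal coupling and the Lipschitz bound on $u^\gamma$, producing cross-terms again absorbable by $\beta\md_1$ plus a residual of size $1/\gamma = \mathcal{O}(\e)$. A Gronwall argument should then yield
\[
\int_0^T W_2^2(\rho^\e(t),\rho^\gamma(t))\,dt \leq \mathcal{O}(\e) + C\bigl(W_2^2(\rho_0^\e,\rho_0^\gamma) + \mh(f_0^\e \mid \rho_0^\gamma, u_0^\gamma)\bigr),
\]
where the initial modulated energy vanishes once we pick a well-prepared monokinetic $(\rho_0^\gamma,u_0^\gamma)$.

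For the horizontal step, I would adapt \cite{CCTpre}: using the characteristic flows of \eqref{main_eq2} and \eqref{main_conti} and the uniform Lipschitz control of $u^\gamma$, the driving discrepancy is the inertial term in the momentum equation of \eqref{main_eq2}, which carries a prefactor $\mathcal{O}(1/\gamma) = \mathcal{O}(\e)$. Gronwall then yields $W_2^2(\rho^\gamma(t),\rho(t)) \leq \mathcal{O}(\e) + C W_2^2(\rho_0^\gamma, \rho_0)$ on $[0,T]$; choosing $\rho_0^\gamma = \rho_0$ kills the initial term on this leg, and combining both legs via $(a+b)^2 \leq 2a^2 + 2b^2$ gives the stated bound. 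The main obstacle is the vertical step: beyond Lemma \ref{prop_rho_wa} itself, the subtle issue is the rigorous time differentiation of $W_2^2(\rho^\e,\rho^\gamma)$ and of $\mh$ when $f^\e$ is only a weak solution satisfying the energy inequality while $(\rho^\gamma,u^\gamma)$ is smooth, together with the integrability needed to make sense of the nonlocal term $\nabla_x W\star(\rho^\e-\rho^\gamma)$ paired against objects controlled only in energy norms. The uniform-in-$\gamma$ Lipschitz bound on $u^\gamma$, which underpins both legs, is itself nontrivial and is precisely what the smoothness requirement on $W$ and the coercivity condition $c_V+c_W>0$ in {\bf (H)} are tailored to provide.
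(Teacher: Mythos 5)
Your overall architecture coincides with the paper's: the intermediate pressureless Euler system, the triangle inequality splitting into the vertical and horizontal legs, the macroscopic relative entropy controlled by the alignment dissipation from \eqref{lem_energy}, the bridge Lemma \ref{prop_rho_wa} between $W_2^2$ and the relative entropy closing the hypocoercivity loop, and Proposition \ref{prop_od} (the adaptation of \cite{CCTpre}) for the overdamped leg. However, there are two concrete errors in how you dispose of the initial-data terms, and the first one breaks your stated vertical-leg bound.

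First, you claim the initial modulated energy $\mh(f_0^\e\mid\rho_0^\gamma,u_0^\gamma)$ ``vanishes once we pick a well-prepared monokinetic $(\rho_0^\gamma,u_0^\gamma)$.'' It does not: $f_0^\e$ is \emph{given}, not chosen, and is in general far from monokinetic, so the kinetic fluctuation part $\int\bigl(\int f_0^\e|v|^2\,dv-\rho_0^\e|u_0^\e|^2\bigr)dx$ is strictly positive and independent of any choice of $(\rho_0^\gamma,u_0^\gamma)$; likewise $\int\rho_0^\e|u_0^\e-u_0^\gamma|^2\,dx$ cannot be annihilated since $u_0^\gamma$ must be a Lipschitz field while $u_0^\e$ is the rough local velocity of $f_0^\e$. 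As written, your vertical bound $\int_0^T W_2^2(\rho^\e,\rho^\gamma)\,dt\le\mathcal{O}(\e)+C\bigl(W_2^2(\rho_0^\e,\rho_0^\gamma)+\mh_0\bigr)$ then carries a non-vanishing $C\mh_0$ and the theorem does not follow. The correct mechanism (Proposition \ref{prop_re2} and Remark \ref{rmk_v2}) is that the initial modulated energy is only required to be \emph{uniformly bounded} in $\e$, and it enters the final estimate divided by $\gamma-C\lambda-e^{C_{\bar u}}(1+\lambda)\sim 1/\e$, i.e.\ it is damped to $\mathcal{O}(\e)$ by the large friction, not killed by preparation of data. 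Your ``bundled functional'' $\mh+\tfrac{\gamma}{2}W_2^2$ contains this mechanism implicitly (dividing the Gronwall output by $\gamma$ recovers the $\e\,\mh_0$ gain), but you must actually invoke it rather than the vanishing claim. Second, you cannot take $\rho_0^\gamma=\rho_0$ on the horizontal leg: $\rho_0$ is only in $\mathcal{P}_2(\R^d)$, whereas both legs require a \emph{strong} solution of \eqref{main_eq2}, hence $H^s$ initial data. The paper instead approximates $\rho_0$ by $\bar\rho_0^\e\in H^s$ with $W_2^2(\rho_0,\bar\rho_0^\e)=\mathcal{O}(\e)$ and sets $\bar u_0^\e=-\kappa(\nabla_x V+\nabla_x W\star\bar\rho_0^\e)$; this matters quantitatively because $M_\gamma$ in Proposition \ref{prop_od} contains $(1+\gamma)W_2^2(\rho_0,\rho_0^\gamma)$, so this distance must itself be $\mathcal{O}(\e)$. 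A minor further point: the uniform-in-$\gamma$ Lipschitz bound on $u^\gamma$ comes from the strong damping $-\gamma\nabla_x u^\gamma$ in the equation for $\nabla_x u^\gamma$ (Lemma \ref{lem_u}), not from the convexity $c_V+c_W>0$; the latter is what produces the $1/\gamma$ decay in the overdamped estimate of Proposition \ref{prop_od}.
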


\begin{remark} As mentioned above, our strategy consists in using  \eqref{main_eq2} as intermediate system and compare the errors from the kinetic equation \eqref{main_eq} to the pressureless Euler equations \eqref{main_eq2} and from \eqref{main_eq2} to the aggregation equation \eqref{main_conti}. These estimates hold as long as there exist strong solutions to the system \eqref{main_eq2} up to the given time $T>0$. Strong solutions can be obtained locally in time by only assuming $\nabla_x W \in (\W^{1,1} \cap \W^{1,\infty})(\R^d)$, see Theorem \ref{thm_local}. However, in order to ensure existence on any arbitrarily large time interval $[0,T)$, the additional regularity for $\nabla_x W$ is required, see Theorem \ref{thm_glo}. Moreover, our error estimates in Section \ref{sec_quanti} and Section \ref{sec_lft} only need the regularity $\nabla_x W \in (\W^{1,1} \cap \W^{1,\infty})(\R^d)$ too.
\end{remark}

The rest of paper is organized as follows. In Section \ref{sec_quanti}, we provide a quantitative error estimate the kinetic equation \eqref{main_eq} and the intermediate pressureless Euler system with nonlocal forces \eqref{main_eq2} by means of the relative entropy argument together with $2$-Wasserstein distance. Section \ref{sec_lft} is devoted to give the details of the proof for our main result on the large friction limit, and the required global-in-time existence theories for the equations \eqref{main_eq}, \eqref{main_conti}, and \eqref{main_eq2} are presented in Section \ref{sec_ext}.

%
%
%
%
\section{Quantitative error estimate between \eqref{main_eq} and \eqref{main_eq2}}\label{sec_quanti}

In this section, we provide the quantitative error estimate between weak solutions to the kinetic equation \eqref{main_eq} and a unique strong solution to the system \eqref{main_eq2} by employing the relative entropy estimate together with $2$-Wasserstein distance. As mentioned in Introduction, we estimate the $2$-Wasserstein distance between the spatial density of \eqref{main_eq} and the density of \eqref{main_eq2}. This together with the standard relative entropy estimate gives our desired quantitative estimate. Note that in this section the result allows more general potentials $V$ and $W$; the particular choice $V = c_V|x|^2/2$ is not required, and the condition $c_V + c_W > 0$ appeared in {\bf (H)} is not needed. The assumption {\bf (H)} implies that the sum of the last two terms in \eqref{freen} related to the macroscopic density $\rho$ involving $V$ and $W$ in the total energy $\mathcal{F}$ is displacement convex with respect to $2$-Wasserstein distance. This fact will be used for the estimate of the large friction limit from \eqref{main_eq2} to \eqref{main_conti} in Section \ref{sec_lft}.

For notational simplicity, we drop the $\gamma$-dependence in solutions and denote by $f^\e := f^{\gamma,\e}, \rho:= \rho^{\gamma}, u:=u^{\gamma}$ throughout this section. In the following two subsections, we prove the proposition below on the quantitative estimate of $2$-Wasserstein distance between solutions to \eqref{main_eq} and \eqref{main_eq2}.
\begin{proposition}\label{prop_main} Let $f^\e$ be the solution to the equation \eqref{main_eq} and $(\bar\rho,\bar u)$ be the strong solution to the system \eqref{main_eq2} on the time interval $[0,T]$. 
Suppose that $\gamma > 0$ is large enough such that $\gamma -C\lambda - e^{C_{\bar u}}(1+\lambda) > 0$, where $C_{\bar u} := C\|\nabla_x \bar u\|_{L^\infty(0,T;L^\infty)}$. Furthermore, we assume that the confinement potential $V$ is bounded from below and the interaction potential $W$ is symmetric and $\nabla_x W \in \W^{1,\infty}(\R^d)$. 
Then we have
\[
W_2^2(\rho^\e(t), \bar\rho(t)) \leq e^{C_{\bar u}}\lt( W_2^2(\rho^\e_0, \bar\rho_0) + \frac{\mathcal{I}(U^\e_0, \bar U_0) + C_{\bar u}\max\{1,\lambda\}\e + e^{C_{\bar u}}\lambda W_2^2(\rho^\e_0,\bar\rho_0)}{\gamma -C\lambda - e^{C_{\bar u}}(1+\lambda)} \rt),
\]
where $\mathcal{I}(U^\e_0, U_0)$ is given by
\[
\mathcal{I}(U^\e_0, \bar U_0) := \int_{\R^d}  \rho^\e_0(x)| u^\e_0(x) - \bar u_0(x)|^2\,dx + \int_{\R^d} \lt( \int_{\R^d} f_0^\e |v|^2\,dv  - \bar \rho_0|\bar u_0|^2\rt)dx,
\]
and $C>0$ is independent of $\gamma, \lambda$ and $\e$, but depends on $T$. \end{proposition}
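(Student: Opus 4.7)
The plan is to run a hypocoercive-type estimate coupling a modulated (relative) energy for the kinetic equation with a direct time derivative of the 2-Wasserstein distance $W_2^2(\rho^\e, \bar\rho)$, using the large friction $\gamma$ as the coercivity source.

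\emph{Wasserstein evolution.} Since $\rho^\e$ and $\bar\rho$ both satisfy continuity equations with velocity fields $u^\e$ and $\bar u$, differentiating $W_2^2$ along the optimal transport plan $\pi_t$ gives $\tfrac{d}{dt}W_2^2 = 2\int(u^\e(x)-\bar u(y))\cdot(x-y)\,d\pi_t(x,y)$. Splitting $u^\e(x) - \bar u(y) = (u^\e(x) - \bar u(x)) + (\bar u(x) - \bar u(y))$, using $|\bar u(x) - \bar u(y)| \leq \|\nabla\bar u\|_\infty|x-y|$, then Cauchy–Schwarz and Young's inequality yield
\[
\tfrac{d}{dt}W_2^2(\rho^\e,\bar\rho) \leq (1+2\|\nabla\bar u\|_\infty)W_2^2(\rho^\e,\bar\rho) + \int\rho^\e|u^\e-\bar u|^2\,dx.
\]

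\emph{Modulated energy evolution.} Next I would compute $\tfrac{d}{dt}\mathcal{I}(U^\e(t), \bar U(t))$, the time-dependent extension of the modulated quantity in the statement. Using the energy identity \eqref{lem_energy} for $\tfrac{d}{dt}\int f^\e|v|^2$, testing \eqref{main_eq2} against $\bar u$ for $\tfrac{d}{dt}\int\bar\rho|\bar u|^2$, and testing the Vlasov equation against $v\cdot\bar u(x,t)$ and $|\bar u(x,t)|^2$ for the cross terms, the friction contributions combine (using crucially the calibration $\lambda=\gamma\kappa$, so that the $\lambda(\nabla V + \nabla W\star\bar\rho)$ forcing balances the $\gamma\bar u$ drag up to interaction differences) into a principal coercive block of order $-\gamma\mathcal{I} - \beta D_1(f^\e)$. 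The convective/Reynolds-stress remainders are bounded by $C\|\nabla\bar u\|_\infty(\mathcal{I} + D_1(f^\e))$, while the nonlocal interaction coupling produces
\[
-2\lambda\int\rho^\e(u^\e-\bar u)\cdot\nabla W\star(\rho^\e-\bar\rho)\,dx.
\]
Since $\nabla W \in \W^{1,\infty}$, Kantorovich–Rubinstein duality gives $|\nabla W\star(\rho^\e-\bar\rho)(x)| \leq \|\nabla^2 W\|_\infty W_1(\rho^\e,\bar\rho) \leq \|\nabla^2 W\|_\infty W_2(\rho^\e,\bar\rho)$; Cauchy–Schwarz and Young's inequality then split the coupling into $\tfrac{\gamma}{2}\int\rho^\e|u^\e-\bar u|^2$ (absorbed into the dissipation) plus $C\lambda^2\gamma^{-1}W_2^2(\rho^\e,\bar\rho)$. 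This is where Lemma \ref{prop_rho_wa}, the new inequality relating the 2-Wasserstein distance to the nonlocal relative-entropy term, is invoked.

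\emph{Closure.} For $\gamma$ large enough that $\gamma - C\lambda - e^{C_{\bar u}}(1+\lambda) > 0$, the combined estimates form a closed Grönwall system in which the $\mathcal{I}$-equation provides a decay rate proportional to $\gamma$, forced by $C_{\bar u}\lambda W_2^2$ and a $D_1(f^\e)$ remainder. Using \eqref{lem_energy} together with $\beta = 1/\e$ yields $\int_0^T D_1(f^\e)\,dt \leq \e\mf(f_0^\e)$, so the $D_1$ remainder integrates to a $\mathcal{O}(\max\{1,\lambda\}\e)$ contribution. An exponential-factor integration of the $\mathcal{I}$-equation, substitution of the resulting bound into the Grönwall integration of the Wasserstein inequality, and resolution of the ensuing self-referential bound on $W_2^2(t)$ produce the estimate of the proposition; the factor $e^{C_{\bar u}}$ comes from the Grönwall integration of the Wasserstein evolution, and the $1/(\gamma - C\lambda - e^{C_{\bar u}}(1+\lambda))$ factor from the $\mathcal{I}$-equation's coercivity. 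The main obstacle is the absence of strict convexity of the pressureless Euler free energy in $\rho$, which precludes controlling $W_2^2$ directly by a modulated entropy and forces this coupled-system approach; the strong-solution regularity of $(\bar\rho, \bar u)$, providing $\|\nabla\bar u\|_{L^\infty(0,T;L^\infty)}$ bounded uniformly in $\gamma$, is indispensable throughout for the Reynolds-stress and modulated-momentum bounds.
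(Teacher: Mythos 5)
Your overall architecture coincides with the paper's: a relative entropy (modulated energy) estimate producing the coercive term $-\gamma\int_0^t\int\rho^\e|u^\e-\bar u|^2$, a bound linking $W_2^2(\rho^\e,\bar\rho)$ to that same time-integrated dissipation, and closure of the coupled system using the largeness of $\gamma$; your treatment of the Reynolds stress via $\int_0^T D_1(f^\e)\,dt\le\e\,\mf(f_0^\e)$ and of the nonlocal coupling via $|\nabla_x W\star(\rho^\e-\bar\rho)|\le C\,W_1\le C\,W_2$ is exactly what the paper does. The one genuinely different ingredient is how you obtain the Wasserstein--entropy link: you differentiate $W_2^2$ along the optimal plan and apply Gr\"onwall, whereas the paper's Lemma \ref{prop_rho_wa} compares the Lipschitz flow of $\bar u$ with a measure-theoretic flow of $u^\e$ built from the superposition principle (Proposition \ref{prop_am}), precisely because $u^\e$ is only an $L^2(\rho^\e\,dx\,dt)$ Borel velocity field and pointwise characteristics need not exist. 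Your route is legitimate and yields the same estimate with the same constants, but the (upper) differentiability of $t\mapsto W_2^2(\rho^\e_t,\bar\rho_t)$ for such rough velocity fields is itself a nontrivial fact whose proof reintroduces the superposition principle; you should either cite it or observe that an upper Dini derivative inequality suffices for the Gr\"onwall step.

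Two inaccuracies in your modulated-energy step, neither fatal but both worth correcting. First, the calibration $\lambda=\kappa\gamma$ is neither assumed nor used in this proposition, whose conclusion treats $\gamma$ and $\lambda$ as independent: the coercive block comes solely from the relative entropy of the linear drag terms $-\gamma v$ and $-\gamma\bar u$, while the potential contributions $\lambda\nabla_x V$ and $\lambda\nabla_x W\star\rho^\e$ are disposed of by integrating by parts in time against the continuity equation and invoking the total energy dissipation \eqref{lem_energy}; there is no balance between forcing and drag. Second, the coercivity acts on $\int\rho^\e|u^\e-\bar u|^2$, not on $\mathcal{I}$ itself: the kinetic-energy defect $\int\bigl(\int f^\e|v|^2\,dv-\bar\rho|\bar u|^2\bigr)dx$ is not sign-definite and is controlled only at time zero through the energy inequality, which is why it enters the final bound solely via the initial datum $\mathcal{I}(U^\e_0,\bar U_0)$.
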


\begin{remark}Without loss of generality, we assume that $V \geq 0$ in the rest of this section.
\end{remark}

%
%
%
%
\subsection{Relative entropy estimate}\label{sec_re}
We rewrite the equations \eqref{main_eq2} in conservative form:
\[
U_t + \nabla_x \cdot A(U) = F(U),
\qquad
\mbox{where } 
m := \rho u, \quad U := \begin{pmatrix}
\rho \\
m 
\end{pmatrix},
\quad
A(U) := \begin{pmatrix}
m  \\
\frac{m \otimes m}{\rho} 
\end{pmatrix},
\]
and
\[
F(U) := -\begin{pmatrix}
0 \\
\displaystyle \gamma \rho u + \lambda \rho\lt(\nabla_x V + \nabla_x W \star \rho \rt)
\end{pmatrix}.
\]
Then the above system has the following macro entropy form $E(U) := \tfrac{|m|^2}{2\rho}$.
Note that the entropy defined above is not strictly convex with respect to $\rho$. 
We now define the relative entropy functional $\mh$ as follows.
\bq\label{def_rel}
\mh(U| \bar U) := E( U) - E(\bar U) - DE(\bar U)( U-\bar U) \quad \mbox{with} \quad \bar U := \begin{pmatrix}
        \bar\rho \\
        \bar m \\
    \end{pmatrix},
\eq
where $D E(U)$ denotes the derivation of $E$ with respect to $\rho, m$, i.e.,
\[
DE(U) = \begin{pmatrix}
\displaystyle        -\frac{|m|^2}{2\rho^2} \\[3mm]
\displaystyle        \frac{m}{\rho} 
    \end{pmatrix}.
\]
This yields 
\[
\mh(U|\bar U) = \frac{\rho|u|^2}{2} - \frac{\bar \rho|\bar u|^2}{2} - \frac{|\bar u|^2}{2}(\bar \rho - \rho) - \bar u\cdot (\rho u - \bar\rho \bar u)= \frac{\rho}{2}|u - \bar u|^2.
\]
We next derive an evolution equation for the integrand relative entropy in the lemma below.
\begin{lemma}\label{lem_rel}The relative entropy $\mh$ defined in \eqref{def_rel} satisfies the following equality:
\begin{align*}
\begin{aligned}
\frac{d}{dt}\int_{\R^d} \mh(U|\bar U)\,dx &= \int_{\R^d} \partial_t E(U)\,dx - \int_{\R^d} \nabla_x (DE(\bar U)):A(U|\bar U)\,dx\cr
&\quad - \int_{\R^d} DE(\bar U)\left[ \pa_t U + \nabla_x \cdot A(U) - F(U)\right]dx\cr
&\quad -\gamma \int_{\R^d} \rho| \bar u - u|^2 - \rho |u|^2\,dx + \lambda \int_{\R^d} \nabla_x V \cdot \rho u\,dx\cr
&\quad + \lambda \int_{\R^d} \rho (u - \bar u) \cdot \nabla_x W \star (\bar \rho - \rho) + \rho u \cdot \nabla_x W \star \rho\,dx,
\end{aligned}
\end{align*}
where $A(U|\bar U) := A(U) - A(\bar U) - DA(\bar U)(U-\bar U)$ is the relative flux functional.
\end{lemma}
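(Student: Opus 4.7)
The plan is a direct relative-entropy computation, using only the PDE for $\bar U$ so that the defect of $U$ from solving \eqref{main_eq2} is preserved as an abstract term on the right-hand side; this keeps the identity valid for the kinetic moment $U$ as well. I would begin from the pointwise identity
\[
\pa_t \mh(U|\bar U) = \pa_t E(U) - DE(\bar U)\,\pa_t U - D^2 E(\bar U)\,\pa_t \bar U \cdot (U-\bar U),
\]
obtained after cancelling the cross term $\pa_t E(\bar U) = DE(\bar U)\,\pa_t \bar U$ and applying the chain rule $\pa_t DE(\bar U) = D^2 E(\bar U)\,\pa_t \bar U$. Substituting $\pa_t \bar U = -\nabla_x \cdot A(\bar U) + F(\bar U)$ from \eqref{main_eq2} into the last term, and in the middle one inserting and subtracting $-\nabla_x\cdot A(U) + F(U)$, I would isolate the abstract defect $\pa_t U + \nabla_x\cdot A(U) - F(U)$.

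After integrating in $x$, the flux contribution yields, by integration by parts and the add-subtract trick,
\[
\int_{\R^d} DE(\bar U)\,\nabla_x\cdot A(U)\,dx = -\int_{\R^d} \nabla_x DE(\bar U):A(U|\bar U)\,dx - \int_{\R^d} \nabla_x DE(\bar U):\bigl[A(\bar U) + DA(\bar U)(U-\bar U)\bigr]\,dx.
\]
The two residuals on the right cancel exactly against the transport piece $\int D^2 E(\bar U)\,\nabla_x\cdot A(\bar U)\cdot(U-\bar U)\,dx$ by the entropy-compatibility identity $\nabla_x DE(\bar U)\cdot DA(\bar U) = D^2E(\bar U)\,\nabla_x A(\bar U)$ for the pressureless Euler flux $A$, together with the observation that $\nabla_x DE(\bar U):A(\bar U)$ is itself a pure divergence in $x$.

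What remains on the source side are $-\int DE(\bar U)\cdot F(U)\,dx$ and $-\int D^2 E(\bar U)\,F(\bar U)\cdot(U-\bar U)\,dx$. Plugging in the explicit forms $DE(\bar U) = (-|\bar u|^2/2,\,\bar u)$, the matrix $D^2 E(\bar U)$, and $F(U), F(\bar U)$ as read from \eqref{main_eq2}, and using the symmetry $W(-x) = W(x)$ to transfer the convolution between the two factors where needed, these integrals regroup into exactly the friction term $-\gamma \int (\rho|u-\bar u|^2 - \rho|u|^2)\,dx$, the confinement term $\lambda \int \nabla_x V \cdot \rho u\,dx$, and the two interaction terms involving $\nabla_x W \star \rho$ and $\nabla_x W \star (\bar\rho - \rho)$ stated in the lemma.

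The main obstacle is the algebraic bookkeeping in the cancellation step: verifying that the flux residuals produced by the add-subtract trick cancel cleanly against the transport part of the chain-rule term, and assembling the friction and interaction pieces into precisely the asymmetric stated form (the appearance of $\rho|u|^2$ beside $\rho|u-\bar u|^2$ is a remnant of keeping $\pa_t E(U)$ unevaluated rather than using the kinetic equation for $f^\e$ at this stage). All integrations by parts and chain-rule manipulations are formal and are justified by the regularity of the strong solution $\bar U$ of \eqref{main_eq2} and the integrability of $U$.
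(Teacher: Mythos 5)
Your proposal is correct and follows essentially the same route as the paper: differentiate $\mh(U|\bar U)$ using only the equation for $\bar U$, add and subtract $\nabla_x\cdot A(U)-F(U)$ to isolate the abstract defect of $U$, cancel the flux residuals via the entropy--flux compatibility identities (the paper cites \cite[Lemma 4.1]{KMT15} for these), and evaluate the source terms explicitly. The only cosmetic differences are that $\nabla_x DE(\bar U):A(\bar U)$ actually vanishes pointwise (not merely as a divergence) and that the regrouping of the interaction terms is pure algebra, not requiring the symmetry of $W$ at this stage.
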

\begin{proof}It follows from \eqref{def_rel} that
\begin{align*}
\begin{aligned}
\frac{d}{dt}\int_{\R^d} \mh(U|\bar U)\,dx & = \int_{\R^d} \partial_t E(U)\,dx - \int_{\R^d} DE(\bar U)(\pa_t U + \nabla_x \cdot A(U)- F(U))\,dx \cr
&\quad +\int_{\R^d} D^2 E(\bar U) \nabla_x \cdot A(\bar U)(U-\bar U) + DE(\bar U) \nabla_x \cdot A(U)\,dx\cr
&\quad -\int_{\R^d} D^2 E(\bar U)F(\bar U)(U-\bar U) + DE(\bar U)F(U)\,dx\cr
&=: \sum_{i=1}^4 I_i.
\end{aligned}
\end{align*}
Integrating by parts, the following identity holds
\[
\int_{\R^d} D^2 E(\bar U) : \nabla_x \cdot A(\bar U) (U - \bar U)\,dx = \int_{\R^d} \nabla_x DE(\bar U) : DA(\bar U)(U - \bar U)\,dx,
\]
see \cite[Lemma 4.1]{KMT15} for details of proof. Moreover, we also find from \cite{KMT15} that
\[
\int_{\R^d} \nabla_x DE(\bar U): A(\bar U)\,dx = 0.
\]
Thus we obtain
\begin{align*}
\begin{aligned}
I_3 &= \int_{\R^d} \left(\nabla_x DE(\bar U)\right):\left( DA(\bar U)(U-\bar U) - A(U)\right)dx  \cr
&= -\int_{\R^d} \left(\nabla_x DE(\bar U)\right):\left(A(U|\bar U) + A(\bar U)\right)dx\cr
&=-\int_{\R^d} \left(\nabla_x DE(\bar U)\right):A(U|\bar U)\,dx.
\end{aligned}
\end{align*}
For the estimate $I_4$, we notice that
\begin{equation*}
    DE(\bar U) = \begin{pmatrix}
\displaystyle       -\frac{|\bar m|^2}{2\bar\rho^2}  \\[4mm]
\displaystyle        \frac{\bar m}{\bar\rho}
    \end{pmatrix}
    \quad \mbox{and} \quad
    D^2E(\bar U) = \begin{pmatrix}
    * & \displaystyle - \frac{\bar m}{\bar \rho^2}  \\[4mm]
        * & \displaystyle \frac{1}{\bar \rho} 
    \end{pmatrix}.
\end{equation*}
Then, by a direct calculation, we find
\[
D^2 E(\bar U)F(\bar U)(U-\bar U) = -\rho(x) (u(x) - \bar u(x))\cdot \lt(\gamma \bar u + \lambda \nabla_x V + \lambda \nabla_x W \star \bar\rho \rt) 
\]
and
\[
DE(\bar U)F(U) = - \rho \bar u \cdot \lt(\gamma u + \lambda \nabla_x V + \lambda \nabla_x W \star \rho \rt).
\]
Thus we obtain
$$\begin{aligned}
-I_4 &= -\int_{\R^d}  \rho(x) (u(x) - \bar u(x))\cdot\lt(\gamma \bar u(x) + \lambda(\nabla_x V(x) + (\nabla_x W \star \bar\rho)(x)) \rt) \,dx \cr
&\quad - \int_{\R^d}  \rho(x) \bar u(x)\cdot\lt(\gamma u(x)+ \lambda \nabla_x V(x) + \lambda (\nabla_x W \star \rho)(x) \rt) \,dx \cr
&= \gamma \int_{\R^d} \rho| \bar u - u|^2 - \rho |u|^2\,dx - \lambda \int_{\R^d} \nabla_x V \cdot  \rho  u\,dx\cr
&\quad - \lambda \int_{\R^d} \rho (u - \bar u) \cdot \nabla_x W \star (\bar \rho - \rho) + \rho u \cdot \nabla_x W \star \rho\,dx.
\end{aligned}$$
Combining the above estimates concludes the desired result.
\end{proof}

In the light of the previous lemma, we provide the following proposition.
\begin{proposition}\label{prop_re}Let $f^\e$ be the solution to the equation \eqref{main_eq} and $(\bar \rho,\bar u)$ be the strong solution to the system \eqref{main_eq2} on the time interval $[0,T]$. Then we have
\begin{align}\label{eqn_rel}
\begin{aligned}
&\int_{\R^d} \mh(U^\e(t)|\bar U(t))\,dx + \gamma\int_0^t \int_{\R^d} \rho^\e(x)| u^\e(x) - \bar u(x)|^2\,dxds\cr
&\qquad \leq \int_{\R^d} \mh(U^\e_0|\bar U_0)\,dx + \int_{\R^d} \lt( \int_{\R^d} f_0^\e |v|^2\,dv  - \bar \rho_0|\bar u_0|^2\rt)dx + C\|\nabla_x \bar u\|_{L^\infty}\max\{1,\lambda\} \e \cr
&\qquad \quad + C\|\nabla_x \bar u\|_{L^\infty}\int_0^t \int_{\R^d} \mh(U^\e(s)|\bar U(s))\,dxds  \cr
&\qquad \quad + \lambda\int_0^t \int_{\R^d} \rho^\e(x) ( u^\e(x) - \bar u(x)) \cdot (\nabla_x W \star (\bar \rho - \rho^\e))(x)\,dxds.
\end{aligned}
\end{align}
\end{proposition}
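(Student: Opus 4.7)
The plan is to derive an evolution inequality for the \emph{modulated kinetic energy}
\[
\mathcal{K}^\e(t) := \int_{\R^d \times \R^d} f^\e(x,v,t)\frac{|v - \bar u(x,t)|^2}{2}\,dxdv,
\]
and then convert it into the stated relative entropy inequality via the Pythagorean identity
\[
\mathcal{K}^\e(t) = \int_{\R^d}\mh(U^\e|\bar U)(t)\,dx + \tfrac{1}{2}D_1(f^\e(t)),
\]
which follows from $u^\e = m^\e/\rho^\e$ and the orthogonality $\int f^\e(v-u^\e)\,dv = 0$.

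\textbf{Step 1 (modulated kinetic identity).} First I would test \eqref{main_eq} against $|v-\bar u(x,t)|^2/2$. Integration by parts in $v$ handles the three forcing terms: the $\gamma v$-piece yields $-\gamma\int f^\e v\cdot(v-\bar u)$; the $\lambda$-potential piece yields $-\lambda\int f^\e(\nabla_x V + \nabla_x W\star\rho^\e)\cdot(v-\bar u)$; and the alignment piece yields exactly $-\beta D_1(f^\e)$ since $\int f^\e(v-u^\e)\cdot(u^\e - \bar u)\,dv = 0$. The time derivative of the test function produces $-\int f^\e(v-\bar u)\cdot\pa_t\bar u\,dxdv$, and integration by parts in $x$ on the transport term plus the splitting $v = (v-\bar u)+\bar u$ produces a quadratic convection form plus an $\bar u\cdot\nabla_x\bar u$ piece. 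Crucially, I would now invoke the non-conservative form
\[
\pa_t\bar u + (\bar u\cdot\nabla_x)\bar u = -\gamma\bar u - \lambda(\nabla_x V + \nabla_x W\star\bar\rho),
\]
obtained from \eqref{main_eq2} combined with the continuity equation for $\bar\rho$. The $\gamma\bar u$ then cancels the $\bar u$-contribution from friction, the $V$-terms cancel exactly, and the $W$-terms combine into a convolution residual. The net identity is
\[
\frac{d}{dt}\mathcal{K}^\e + \beta D_1(f^\e) + \gamma\!\int f^\e|v-\bar u|^2\,dxdv = -\!\int f^\e (v-\bar u)^T\nabla_x\bar u(v-\bar u)\,dxdv + \lambda\!\int\rho^\e(u^\e-\bar u)\cdot\nabla_x W\star(\bar\rho-\rho^\e)\,dx.
\]

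\textbf{Step 2 (splitting and convection bound).} I would apply the Pythagorean identity again to write $\gamma\int f^\e|v-\bar u|^2 = \gamma\int\rho^\e|u^\e-\bar u|^2\,dx + \gamma D_1$, collecting all $D_1$ contributions on the left with coefficient $\beta+\gamma$. The quadratic convection integrand is pointwise bounded by $\|\nabla_x\bar u\|_{L^\infty}|v-\bar u|^2$, and the same decomposition gives
\[
\Bigl|\int f^\e(v-\bar u)^T\nabla_x\bar u(v-\bar u)\,dxdv\Bigr| \leq \|\nabla_x\bar u\|_{L^\infty}\Bigl(2\!\int\mh(U^\e|\bar U)\,dx + D_1(f^\e)\Bigr).
\]

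\textbf{Step 3 (integration and absorption).} Integrating over $[0,t]$ and using $\mathcal{K}^\e = \int\mh + \tfrac12 D_1$ at both endpoints, the nonnegative quantities $\tfrac12 D_1(f^\e(t))$ and $(\beta+\gamma-\|\nabla_x\bar u\|_{L^\infty})\int_0^t D_1\,ds$ (positive for $\beta=1/\e$ small) can be dropped from the LHS, yielding the desired inequality up to replacing $\tfrac12 D_1(f^\e_0)$ by the proposition's initial-data functional. For this last step I would use $\int\mh(U^\e_0|\bar U_0)\,dx + \tfrac12 D_1(f^\e_0) = \int f_0^\e|v-\bar u_0|^2/2\,dxdv$ and expand it so that the resulting expression is dominated by $\int \mh(U^\e_0|\bar U_0)\,dx + \int(\int f_0^\e|v|^2\,dv - \bar\rho_0|\bar u_0|^2)\,dx$, with any leftover cross terms absorbed into the $\e$-correction. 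The $\mathcal O(\max\{1,\lambda\}\e)$ error arises because the global energy inequality \eqref{lem_energy} (with $\beta=1/\e$) gives $\int_0^t D_1\,ds \leq \e\mf(f^\e_0) = \mathcal O(\e)$; propagating this through the convection bound produces the factor $\|\nabla_x\bar u\|_{L^\infty}\e$, while similar estimates for $\lambda$-weighted potential residuals (e.g.\ those coming from the interaction contribution to $\mf$) provide the additional factor of $\lambda$.

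\textbf{Main obstacle.} The principal technical hurdle is the structural gap between the natural kinetic quantity $\mathcal{K}^\e = \int f^\e|v-\bar u|^2/2$ and the hydrodynamic quantity $\int\mh(U^\e|\bar U) = \int\rho^\e|u^\e-\bar u|^2/2$: they differ precisely by $\tfrac12 D_1(f^\e)$, which represents the unresolved kinetic variance. Tracking this discrepancy requires careful bookkeeping — gaining $\beta D_1$ as dissipation from the alignment term, losing $\|\nabla_x\bar u\|_{L^\infty}D_1$ through the convection estimate, and converting $\tfrac12 D_1(f^\e_0)$ to the proposition's form — in such a way that for $\beta \gg \|\nabla_x\bar u\|_{L^\infty}$ the residue becomes $\mathcal O(\e)$. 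A secondary obstacle is that the nonlocal $W$-convolution term is not controlled by $\int\mh(U^\e|\bar U)$ alone (which is blind to $\rho^\e - \bar\rho$), so it must be carried through as a separate residual to be closed in a later step via the $2$-Wasserstein distance.
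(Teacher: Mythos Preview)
Your route via the modulated kinetic energy $\mathcal{K}^\e(t)=\tfrac12\int f^\e|v-\bar u|^2$ is genuinely different from the paper's and your Steps~1--2 are correct: the paper works instead at the hydrodynamic level, starting from the abstract relative-entropy identity of Lemma~\ref{lem_rel} for $\int\mh(U^\e|\bar U)$, and then uses the \emph{total} energy $\mf(f^\e)$ to absorb the closure-defect term $J_4^\e=\int(u^\e\otimes u^\e-v\otimes v)f^\e$ (this is where the factor $C\|\nabla_x\bar u\|_{L^\infty}\max\{1,\lambda\}\e$ originates, via $\int_0^tD_1\le\e\,\mf(f_0^\e)$). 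Your kinetic computation bypasses this defect entirely --- the quadratic convection term $\int f^\e(v-\bar u)^T\nabla_x\bar u(v-\bar u)$ is exact, not a moment approximation --- and its $D_1$-contribution is swallowed by the $(\beta+\gamma)\int_0^tD_1$ you already placed on the left. So your derived inequality actually carries \emph{no} $\mathcal{O}(\e)$ error at all, and its natural initial datum is $\mathcal{K}^\e(0)=\tfrac12\int f_0^\e|v-\bar u_0|^2$.

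The gap is in Step~3, where you try to force your clean inequality into the proposition's stated form. The difference between your initial term and the proposition's is
\[
\Bigl[\int\mh(U_0^\e|\bar U_0)+\int\!\Bigl(\int f_0^\e|v|^2\,dv-\bar\rho_0|\bar u_0|^2\Bigr)dx\Bigr]-\mathcal{K}^\e(0)
=\tfrac12\int f_0^\e|v|^2\,dxdv+\tfrac12\int\rho_0^\e|u_0^\e|^2\,dx-\int\bar\rho_0|\bar u_0|^2\,dx,
\]
which involves the bar-quantities $(\bar\rho_0,\bar u_0)$ and is not sign-definite, nor $\mathcal{O}(\e)$, for generic data (take e.g.\ $f_0^\e=\rho_0^\e\delta_{v=0}$ with $\bar u_0\not\equiv0$). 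There are no ``leftover cross terms'' to absorb into an $\e$-correction, because your approach produces no $\e$-correction in the first place; the discussion of ``$\lambda$-weighted potential residuals'' is likewise misplaced, since in your identity the $V$-terms cancel exactly and the $W$-residual is already isolated. Your inequality with $\mathcal{K}^\e(0)$ on the right is perfectly valid and would serve equally well for the downstream Proposition~\ref{prop_main}, but it does not imply the proposition exactly as stated; the paper's form of the initial data arises specifically from routing the argument through $\mf(f^\e)-\mf(f_0^\e)$ and the bound $E(U^\e)\le K(f^\e)$.
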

\begin{proof}It follows from Lemma \ref{lem_rel} that
\begin{align*}
\begin{aligned}
&\int_{\R^d} \mh(U^\e(t)|\bar U(t))\,dx + \gamma\int_0^t \int_{\R^d} \rho^\e(x)| u^\e(x) - \bar u(x)|^2\,dxds\cr 
&\quad = \int_{\R^d} \mh(U_0^\e|\bar U_0)\,dx + \int_{\R^d} E(U^\e) - E(\bar U_0)\,dx - \int_0^t \int_{\R^d} \nabla_x (DE(\bar U)):A(U^\e|\bar U)\,dxds \cr
&\qquad - \int_0^t \int_{\R^d} DE(\bar U)\left[ \pa_s U^\e + \nabla_x \cdot A(U^\e) - F(U^\e)\right]dxds \cr
&\qquad + \gamma\int_0^t \int_{\R^d} \rho^\e(x) |u^\e(x)|^2\,dxds + \lambda \int_0^t \int_{\R^d} \nabla_x V(x) \cdot \rho^\e(x) u^\e(x)\,dxds\cr
&\qquad + \lambda\int_0^t \int_{\R^d} \rho^\e(x) ( u^\e(x) - \bar u(x)) \cdot (\nabla_x W \star (\bar \rho - \rho^\e))(x) +  \rho^\e(x) u^\e(x) \cdot (\nabla_x W \star \rho^\e)(x)\,dxds\cr
&\quad =: \sum_{i=1}^7 J_i^\e.
\end{aligned}
\end{align*}
Here $J_i^\e, i =2,\cdots,7$ can be estimated as follows. \newline

\noindent {\bf Estimate of $J_2^\e$}: Note that
\bq\label{est_uf}
|u^\e|^2 =  \lt|\frac{\displaystyle \int_{\R^d} vf^\e\,dv}{\displaystyle\int_{\R^d} f^\e\,dv} \rt|^2 \leq \frac{\displaystyle \int_{\R^d} |v|^2 f^\e\,dv}{\rho^\e}, \quad \mbox{i.e.,} \quad \rho^\e|u^\e|^2 \leq \int_{\R^d} |v|^2 f^\e\,dv.
\eq
This gives
\[
E(U^\e) = \frac12 \rho^\e|u^\e|^2\,  \leq \frac12\int_{\R^d} |v|^2 f^\e\,dv =: K(f^\e).
\]
Thus, by adding and subtracting the functional $K(f^\e)$, we find
$$\begin{aligned}
J_2^\e &= \int_{\R^d} E(U^\e)\,dx - \int_{\R^d} K(f^\e)\,dx + \int_{\R^d} K(f^\e)\,dx - \int_{\R^d} K(f^\e_0)\,dx\cr
&\quad + \int_{\R^d} K(f^\e_0)\,dx - \int_{\R^d} E(\bar U_0)\,dx\cr
&\leq 0 + \int_{\R^d} K(f^\e)\,dx - \int_{\R^d} K(f^\e_0)\,dx + \int_{\R^d} K(f^\e_0)\,dx - \int_{\R^d} E(\bar U_0)\,dx.
\end{aligned}$$
\noindent {\bf Estimate of $J_3^\e$}: It follows from \cite[Lemma 4.3]{KMT15} that
\[
A(U^\e|\bar U) = \begin{pmatrix}
    0   \\[4mm]
       \rho^\e (u^\e - \bar u) \otimes (u^\e - \bar u) 
\end{pmatrix}.
\]
This together with the fact $DE(\bar U) = \binom{-|\bar u|^2/2}{\bar u}$ yields
\[
J_3^\e \leq C\|\nabla_x \bar u\|_{L^\infty}\int_0^t\int_{\R^d} \rho^\e|u^\e-\bar u|^2\,dxds = C\|\nabla_x \bar u\|_{L^\infty}\int_0^t \int_{\R^d} \mh(U^\e(s)|\bar U(s))\,dxds.
\]
\noindent {\bf Estimate of $J_4^\e$}: A direct computation asserts 
\[
|J_4^\e| \leq \|\nabla_x \bar u\|_{L^\infty}\int_0^t \int_{\R^d} \lt|\int_{\R^d} (u^\e \otimes u^\e - v \otimes v)f^\e\,dv \rt|dxds.
\]
On the other hand, we get
\[
\int_{\R^d} (u^\e \otimes u^\e - v \otimes v)f^\e\,dv = \int_{\R^d} (u^\e - v) \otimes (v - u^\e)\,f^\e\,dv. 
\]
This together with \eqref{lem_energy} gives
\[
|J_4^\e| \leq C\|\nabla_x \bar u\|_{L^\infty}\max\{1,\lambda\} \e,
\]
where $C > 0$. \newline

\noindent {\bf Estimate of $J_5^\e + J_6^\e$}: Integrating by parts gives
$$\begin{aligned}
\lambda \int_0^t \int_{\R^d} \nabla_x V(x) \cdot \rho^\e(x) u^\e(x)\,dxds &= -\lambda\int_0^t \int_{\R^d} V(x) \nabla_x \cdot (\rho^\e(x,s) u^\e(x,s))\,dxds \cr
&= \lambda\int_0^t \int_{\R^d} V(x) \pa_s \rho^\e(x,s)\,dxds \cr
&= \lambda\int_{\R^d} V(x)\rho^\e(x,t)\,dx - \lambda\int_{\R^d} V(x)\rho^\e_0(x)\,dx.
\end{aligned}$$
Thus we get
\[
J_5^\e + J_6^\e = \gamma\int_0^t \int_{\R^d} \rho^\e(x) |u^\e(x)|^2\,dxds + \lambda\int_{\R^d} V(x)\rho^\e(x,t)\,dx - \lambda\int_{\R^d} V(x)\rho^\e_0(x)\,dx.
\]
\noindent {\bf Estimate of $J_7^\e$}: Note that
$$\begin{aligned}
 &\lambda\int_0^t \int_{\R^d} \rho^\e(x,s) u^\e(x,s) \cdot (\nabla_x W \star \rho^\e)(x,s)\,dxds \cr
 &\quad =  \lambda\int_0^t \int_{\R^d} \pa_s (\rho^\e(x,s) )  (W \star \rho^\e)(x,s)\,dxds\cr
 &\quad =\frac\lambda2 \int_0^t \frac{\pa}{\pa s}\lt(\int_{\R^d \times \R^d}  W(x-y) \rho^\e(x,s)   \rho^\e(y,s)\,dxdy\rt)ds\cr
 &\quad =\frac\lambda2\lt(\int_{\R^d \times \R^d}  W(x-y) \rho^\e(x,t)\rho^\e(y,t)\,dxdy - \int_{\R^d \times \R^d}  W(x-y) \rho^\e_0(x)   \rho^\e_0(y)\,dxdy \rt).
\end{aligned}$$
This yields
$$\begin{aligned}
J_7^\e &= \lambda\int_0^t \int_{\R^d} \rho^\e(x) ( u^\e(x) - \bar u(x)) \cdot (\nabla_x W \star (\bar \rho - \rho^\e))(x)\,dxds \cr
&\quad + \frac\lambda2\lt(\int_{\R^d \times \R^d}  W(x-y) \rho^\e(x,t)\rho^\e(y,t)\,dxdy - \int_{\R^d \times \R^d}  W(x-y) \rho^\e_0(x)   \rho^\e_0(y)\,dxdy \rt).
\end{aligned}$$
We now combine the estimates $J_i^\e, i=2, 5, 6,7$ to get
$$\begin{aligned}
\sum_{i \in \{2,5,6,7\}} J_i^\e &= \int_{\R^d} K(f^\e_0)\,dx - \int_{\R^d} E(\bar U_0)\,dx + \mf(f^\e) - \mf(f_0^\e) \cr
&\quad + \lambda\int_0^t \int_{\R^d} \rho^\e(x) ( u^\e(x) - \bar u(x)) \cdot (\nabla_x W \star (\bar \rho - \rho^\e))(x)\,dxds\cr
&\quad + \gamma\int_0^t \int_{\R^d} \rho^\e(x) |u^\e(x)|^2\,dxds.
\end{aligned}$$
We then use \eqref{lem_energy} and \eqref{est_uf} to find
$$\begin{aligned}
\sum_{i \in \{2,5,6,7\}} J_i^\e &\leq \int_{\R^d} K(f^\e_0)\,dx - \int_{\R^d} E(\bar U_0)\,dx \cr
&\quad + \lambda\int_0^t \int_{\R^d} \rho^\e(x) ( u^\e(x) - \bar u(x)) \cdot (\nabla_x W \star (\bar \rho - \rho^\e))(x)\,dxds.
\end{aligned}$$
We finally combine all the above estimates to conclude the proof.
\end{proof}

\begin{remark}
Note that we proved $J_4^\e = \mathcal{O}(\e)$, if $\lambda$ is a fixed constant, in contrast with \cite[Lemma 4.4]{KMT15}, where they only proved $J_4^\e = \mathcal{O}(\sqrt\e)$ due to the pressure term in the Euler equations.
\end{remark}

%
%
%
%
\subsection{Relative entropy combined with $2$-Wasserstein distance}

In this part, we show that the $2$-Wasserstein distance can be bounded by the relative entropy. 

Note that the local densities $\bar \rho$ and $\rho^\e$ satisfy
\[
\pa_t \bar \rho + \nabla_x \cdot (\bar \rho \bar u) = 0 \quad \mbox{and}  \quad \pa_t \rho^\e + \nabla_x \cdot (\rho^\e u^\e) = 0,
\]
respectively. Let us define forward characteristics $X(t) := X(t;0,x)$ and $X^\e(t) := X^\e(t;0,x)$, $t \in [0,T]$ which solve the following ODEs:
\bq\label{eq_char2}
\pa_t X(t) = \bar u(X(t),t) \quad \mbox{and}  \quad \pa_t X^\e(t) = u^\e(X^\e(t),t)
\eq
with $X(0) = X^\e(0) = x \in \R^d$, respectively. Since we assumed that $\bar u$ is bounded and Lipschitz continuous on the time interval $[0,T]$, there exists a unique solution $\bar \rho$, which is determined as the push-forward of the its initial densities through the flow maps $X$, i.e.,  $\bar\rho(t) = X(t;0,\cdot) \# \bar\rho_0$. Here $\cdot \,\# \,\cdot $ stands for the push-forward of a probability measure by a measurable map, more precisely, $\nu = \mt \# \mu$ for probability measure $\mu$ and measurable map $\mt$ implies
\[
\int_{\R^d} \varphi(y) \,d\nu(y) = \int_{\R^d} \varphi(\mt(x)) \,d\mu(x),
\]
for all $\varphi \in \mc_b(\R^d)$. Note that the solution $X(t;0,x)$ is Lipschitz in $x$ with the Lipschitz constant $e^{\|\nabla_x \bar u\|_{L^\infty}}$. Indeed, we estimate
\begin{align*}
|X(t;0,x) - X(t;0,y)| &\leq |x-y| + \int_0^t |\bar u(X(s;0,x)) - \bar u(X(s;0,y))|\,ds\cr
&\leq |x-y| + \|\nabla_x \bar u\|_{L^\infty}\int_0^t |X(s;0,x) - X(s;0,y)|\,ds.
\end{align*}
Apply Gr\"onwall's lemma to the above gives
\bq\label{est_xlip}
|X(t;0,x) - X(t;0,y)| \leq  e^{\|\nabla_x \bar u\|_{L^\infty}t}|x-y|.
\eq
On the other hand, the regularity of $u^\e$ is not enough to have the existence of solutions $X^\e(t)$ to the second differential equation in \eqref{eq_char2}. Thus, inspired by the following proposition from \cite[Theorem 8.2.1]{AGS08}, see also \cite[Proposition 3.3]{FK19}, we overcome this difficulty.
\begin{proposition}\label{prop_am}Let $T>0$ and $\rho : [0,T] \to \mathcal{P}(\R^d)$ be a narrowly continuous solution of \eqref{eq_char2}, that is, $\rho$ is continuous in the duality with continuous bounded functions, for a Borel vector field $u$ satisfying
\bq\label{est_p1}
\int_0^T\int_{\R^d} |u(x,t)|^p\rho(x,t)\,dx dt < \infty,
\eq
for some $p > 1$. Let $\Xi_T: [0,T] \to \R^d$ denote the space of continuous curves. Then there exists a probability measure $\eta$ on $\Xi_T \times \R^d$ satisfying the following properties:
\begin{itemize}
\item[(i)] $\eta$ is concentrated on the set of pairs $(\xi,x)$ such that $\xi$ is an absolutely continuous curve satisfying
\[
\dot\xi(t) = u(\xi(t),t)
\]
for almost everywhere $t \in (0,T)$ with $\xi(0) = x \in \R^d$.
\item[(ii)] $\rho$ satisfies
\[
\int_{\R^d} \varphi(x)\rho\,dx = \int_{\Xi_T \times \R^d}\varphi(\xi(t))\,d\eta(\xi,x)
\]
for all $\varphi \in \mc_b(\R^d)$, $t \in [0,T]$.
\end{itemize}
\end{proposition}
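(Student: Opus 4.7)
The plan is to construct $\eta$ as a narrow limit of measures built from classical characteristic flows for smooth approximations of $u$, and then identify the support of the limit via a Fatou-type argument that exploits the uniform $L^p$-bound \eqref{est_p1}. First, I would regularize by spatial mollification: fix a standard mollifier $\zeta_n$, set $\rho_n := \rho *_x \zeta_n$ and $u_n := ((u\rho) *_x \zeta_n)/\rho_n$ on $\{\rho_n > 0\}$. Then $(\rho_n, u_n)$ still solves the continuity equation, $u_n$ is smooth in $x$ for each fixed $t$, and Jensen's inequality applied to the jointly convex map $(m,r)\mapsto |m|^p/r^{p-1}$ yields
\[
\int_0^T\!\!\int_{\R^d} |u_n|^p \rho_n\,dxdt \leq \int_0^T\!\!\int_{\R^d} |u|^p \rho\,dxdt < \infty
\]
uniformly in $n$. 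Since $u_n$ is locally Lipschitz in space, the ODE $\dot\xi = u_n(\xi,t)$ with $\xi(0)=x$ admits a unique classical solution $X_n(\cdot;x) \in \Xi_T$, and we set $\eta_n := (X_n(\cdot;\cdot),\mathrm{id})_\#\rho_0$. By construction the push-forward identity (ii) holds at level $n$ with $\rho_n$ in place of $\rho$.

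Next, I would establish tightness of $\{\eta_n\}$ in $\mathcal{P}(\Xi_T \times \R^d)$. The $\R^d$-marginal equals $\rho_0$ and is therefore tight. For the path marginal, Hölder's inequality gives
\[
|\xi(t)-\xi(s)| \leq (t-s)^{1-1/p}\lt(\int_s^t |\dot\xi(\tau)|^p\,d\tau\rt)^{1/p},
\]
and the uniform bound $\int \int_0^T |\dot\xi|^p\,d\tau\,d\eta_n \leq C$ coming from Step 1 combined with Markov's inequality yields a uniform modulus of continuity off sets of arbitrarily small $\eta_n$-mass; Ascoli-Arzel\`a then gives tightness of the path marginals. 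Prokhorov's theorem supplies a narrowly convergent subsequence $\eta_n \rightharpoonup \eta$. Assertion (ii) then follows by testing against $\varphi \circ e_t$ where $e_t(\xi,x) := \xi(t)$ is continuous and bounded (for bounded $\varphi$), using narrow convergence of $\eta_n$ and the standard fact that $\rho_n(t) \rightharpoonup \rho(t)$ narrowly for each fixed $t$, which follows from the narrow continuity of $\rho$ and properties of the mollification.

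The main obstacle is proving (i): that $\eta$-almost every $\xi$ is absolutely continuous and satisfies $\dot\xi(t) = u(\xi(t),t)$ for a.e.\ $t$. The key device is the nonnegative functional
\[
\Phi(\xi,x) := \int_0^T \lt| \xi(t) - x - \int_0^t u(\xi(s),s)\,ds \rt| dt,
\]
whose zero set is exactly the set of integral curves of $u$ starting at $x$. Its regularized analogue $\Phi_n$ (with $u_n$ replacing $u$) vanishes $\eta_n$-almost surely by construction. The delicate point is to pass to the limit in $\Phi_n$: decomposing
$\int_0^t u_n(\xi(s),s)\,ds = \int_0^t u(\xi(s),s)\,ds + \int_0^t (u_n-u)(\xi(s),s)\,ds$,
one controls the error term via $(e_s)_\# \eta_n = \rho_n(s)$ and the $L^p(\rho\,dxdt)$-convergence $u_n \to u$ which is standard for the given mollification and is precisely where hypothesis \eqref{est_p1} enters. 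Lower semicontinuity of the remaining part of $\Phi$ under narrow convergence, combined with Fatou's lemma, gives $\int \Phi\,d\eta \leq \liminf_n \int \Phi_n\,d\eta_n = 0$, so $\Phi = 0$ $\eta$-a.e., which yields (i) and completes the proof.
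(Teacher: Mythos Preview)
The paper does not give its own proof of this proposition: it is quoted verbatim as a known result from \cite[Theorem~8.2.1]{AGS08} (see also \cite[Proposition~3.3]{FK19}), so there is no argument to compare against. Your sketch is essentially the strategy of the Ambrosio--Gigli--Savar\'e proof (mollify in space, build classical characteristic flows, extract a tight limit, identify the limit), so in that sense you are reproducing the reference the paper cites.

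That said, the step you yourself flag as ``the delicate point'' is handled too loosely. Two issues. First, the functional $\Phi(\xi,x)=\int_0^T\bigl|\xi(t)-x-\int_0^t u(\xi(s),s)\,ds\bigr|\,dt$ involves the merely Borel field $u$, so $\Phi$ is \emph{not} lower semicontinuous on $\Xi_T\times\R^d$ and you cannot directly invoke ``lower semicontinuity under narrow convergence plus Fatou''. Second, $u$ is only defined $\rho$-a.e., while the error term you isolate is integrated against $\eta_n$, whose time-marginals are $\rho_n$ rather than $\rho$; so the expression $(u_n-u)(\xi(s),s)$ under $d\eta_n$ and the claimed $L^p(\rho\,dxdt)$-convergence $u_n\to u$ do not match up without further work. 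The way AGS actually close this gap is different from your $\Phi$-argument: they use that the action functional $\eta\mapsto\int\!\!\int_0^T|\dot\xi|^p\,dt\,d\eta$ \emph{is} narrowly lower semicontinuous (no reference to $u$), and then exploit the already-established identity $(e_t)_\#\eta=\rho_t$ from part (ii) to rewrite all integrals involving $u(\xi(t),t)$ as $\int u\,d\rho_t$, which requires no continuity of $u$ whatsoever. Combining these two ingredients yields $\int\!\!\int_0^T|\dot\xi(t)-u(\xi(t),t)|\,dt\,d\eta=0$. Your outline is on the right track, but as written the limit passage for (i) has a genuine gap.
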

Note that it follows from \eqref{lem_energy}, see also \eqref{est_uf},  that
\[
\int_{\R^d} |u^\e|^2 \rho^\e\,dx \leq \int_{\R^d \times \R^d} |v|^2 f^\e\,dxdv  < \infty,
\]
i.e., \eqref{est_p1} holds for $p=2$, and thus by Proposition \ref{prop_am}, we have the existence of a probability measure $\eta^\e$ in $\Xi_T \times \R^d$, which is concentrated on the set of pairs $(\xi,x)$ such that $\xi$ is a solution of
\bq\label{eq_gam}
\dot{\xi}(t) = u^\e(\xi(t),t)
\eq
with $\xi(0) = x \in \R^d$. Moreover, we have
\bq\label{eq_gam2}
\int_{\R^d} \varphi(x) \rho^\e(x,t)\,dx = \int_{\Xi_T \times \R^d}\varphi(\xi(t))\,d\eta^\e(\xi,x)
\eq
for all $\varphi \in \mc_b(\R^d)$, $t \in [0,T]$.

\begin{lemma}\label{prop_rho_wa} Let $f^\e$ be the solution to the equation \eqref{main_eq} and $(\bar \rho,\bar u)$ be the strong solution to the system \eqref{main_eq2} on the time interval $[0,T]$. Then we have
\[
W_2^2(\rho^\e(t), \bar\rho(t)) \leq C\exp\lt(C\|\nabla_x \bar u\|_{L^\infty(0,T;L^\infty)}\rt)\lt(W_2^2(\rho^\e_0, \bar \rho_0) + \int_0^t \int_{\R^d} \mh(U^\e(s)|\bar U(s))\,dx\,ds\rt),
\]
for $0 \leq t \leq T$, where $\rho^\e = \int_{\R^d} f^\e\,dv$ and $C > 0$ depends only on $T$.
\end{lemma}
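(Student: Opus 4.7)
My strategy is to construct an explicit coupling between $\rho^\e(t)$ and $\bar\rho(t)$ by transporting an optimal initial coupling $\pi_0 \in \Pi(\rho_0^\e, \bar\rho_0)$ along the two flows: the superposition measure $\eta^\e$ from Proposition~\ref{prop_am} (which plays the role of a generalized flow for the non-smooth $u^\e$), and the classical ODE flow $X(t;0,\cdot)$ associated with the Lipschitz $\bar u$. I first disintegrate $\eta^\e$ over its marginal at time zero, which equals $\rho_0^\e$ since property (i) forces $\xi(0)=x$ while \eqref{eq_gam2} at $t=0$ pins down the corresponding projection. Writing $\eta^\e = \int \eta^\e_x \, d\rho_0^\e(x)$ with $\eta^\e_x$ supported on curves starting at $x$ that solve \eqref{eq_gam}, and disintegrating the optimal coupling as $\pi_0(dx,dy) = \pi_{0,x}(dy)\,\rho_0^\e(dx)$, I glue them into a probability measure
\[
d\mu(\xi,x,y) := d\eta^\e_x(\xi)\,d\pi_{0,x}(y)\,d\rho_0^\e(x)
\]
on $\Xi_T \times \R^d \times \R^d$. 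Setting $\pi_t := \bigl((\xi,x,y)\mapsto (\xi(t),X(t;0,y))\bigr)_\# \mu$, a direct check of the two marginals via \eqref{eq_gam2} and the push-forward representation $\bar\rho(t) = X(t;0,\cdot)\#\bar\rho_0$ shows $\pi_t \in \Pi(\rho^\e(t), \bar\rho(t))$, so
\[
W_2^2(\rho^\e(t),\bar\rho(t)) \leq \int_{\Xi_T\times\R^d\times\R^d} |\xi(t) - X(t;0,y)|^2 \, d\mu(\xi,x,y).
\]

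Next I split the integrand by the elementary inequality $|\xi(t)-X(t;0,y)|^2 \leq 2|\xi(t)-X(t;0,x)|^2 + 2|X(t;0,x)-X(t;0,y)|^2$. The second piece is immediately controlled by \eqref{est_xlip} and the construction of $\mu$, yielding a contribution bounded by $e^{2\|\nabla_x\bar u\|_{L^\infty(0,T;L^\infty)}t}\,W_2^2(\rho_0^\e,\bar\rho_0)$. For the first piece I define $\Phi(t) := \int_{\Xi_T\times\R^d} |\xi(t) - X(t;0,x)|^2\, d\eta^\e(\xi,x)$ and observe that $\Phi(0)=0$ because $\xi(0)=x$ for $\eta^\e$-a.e.\ $(\xi,x)$. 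Differentiating in $t$ using $\dot\xi(t)=u^\e(\xi(t),t)$ and $\partial_tX(t;0,x)=\bar u(X(t;0,x),t)$, then inserting $\pm\bar u(\xi(t),t)$ and applying Young's inequality together with the Lipschitz bound on $\bar u$, I obtain
\[
\frac{d}{dt}\Phi(t) \leq (1+2\|\nabla_x\bar u\|_{L^\infty})\Phi(t) + \int_{\Xi_T\times\R^d} |u^\e(\xi(t),t)-\bar u(\xi(t),t)|^2\, d\eta^\e(\xi,x).
\]
Invoking \eqref{eq_gam2} with test function $|u^\e(\cdot,t)-\bar u(\cdot,t)|^2$, the remainder equals $\int_{\R^d} |u^\e-\bar u|^2\rho^\e\,dx = 2\int_{\R^d}\mh(U^\e(t)|\bar U(t))\,dx$. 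Gr\"onwall's lemma together with $\Phi(0)=0$ then controls $\Phi(t)$, and combining with the bound for the second piece produces the claimed estimate after absorbing constants into a factor of $C\exp(C\|\nabla_x\bar u\|_{L^\infty(0,T;L^\infty)})$.

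The main obstacle is the measure-theoretic bookkeeping required to verify that $\pi_t$ has the correct marginals: one must carefully combine properties (i)--(ii) of Proposition~\ref{prop_am} with the disintegrations above, including the non-trivial fact that the $t=0$ marginal of $\eta^\e$ is $\rho_0^\e$. A second delicate point is the use of \eqref{eq_gam2} with the unbounded and a priori merely Borel test function $|u^\e(\cdot,t)-\bar u(\cdot,t)|^2$, which I handle by truncation and approximation by continuous bounded functions, relying on the uniform bound $\int |u^\e|^2\rho^\e\,dx \leq \int |v|^2 f^\e\,dxdv$ inherited from \eqref{est_uf} and the energy inequality \eqref{lem_energy}, together with the global $L^\infty$ bound on $\bar u$, to dominate the tails.
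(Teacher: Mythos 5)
Your proof is correct and follows essentially the same route as the paper: the paper introduces the intermediate density $\hat\rho^\e = X\#\rho_0^\e$ and splits $W_2(\rho^\e,\bar\rho)$ by the triangle inequality, which amounts to exactly your decomposition of $|\xi(t)-X(t;0,y)|^2$, and it likewise relies on the superposition measure $\eta^\e$, its disintegration over $\rho_0^\e$, the Lipschitz bound \eqref{est_xlip}, and the identity $\rho^\e|u^\e-\bar u|^2 = 2\mh(U^\e|\bar U)$. The only cosmetic difference is that the paper runs Gr\"onwall pointwise along each pair of curves and then applies Cauchy--Schwarz in time, whereas you apply Gr\"onwall directly to the integrated quantity $\Phi(t)$; both yield the same estimate.
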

\begin{proof}
Let us introduce a density $\hat \rho^\e$ which is determined by the push-forward of $\rho_0^\e$ through the flow map $X$, i.e., $\hat\rho^\e = X \# \rho_0^\e$. For the proof, we estimate $W_2(\bar\rho, \hat\rho^\e)$ and $W_2(\rho^\e, \hat \rho^\e)$ to have the error estimate between $\bar\rho$ and $\rho^\e$ in $2$-Wasserstein distance.  Let us first show the estimate of $W_2(\bar\rho, \hat\rho^\e)$. We choose an optimal transport map $\mt_0^\e(x)$ between $\rho_0^\e$ and $\bar\rho_0$ such that $\rho_0^\e = \mt_0^\e \# \bar\rho_0$. Then since $\bar\rho = X \# \bar\rho_0$ and $\hat \rho^\e = X \# \rho_0^\e$, we find
\[
W_2^2(\bar\rho(t), \hat\rho^\e(t)) \leq \int_{\R^d} |X(t;0,x) - X(t;0,\mt_0^\e(x))|^2  \rho_0(x)\,dx.
\]
Then this together with the Lipschitz estimate of $X$ appeared in \eqref{est_xlip} yields
 \[
W_2^2(\bar\rho(t), \hat\rho^\e(t)) \leq  e^{2\|\nabla_x \bar u\|_{L^\infty} t}\int_{\R^d} |x - \mt_0^\e(x)|^2  \rho_0(x)\,dx = e^{2\|\nabla_x \bar u\|_{L^\infty} t}W_2^2(\bar\rho_0, \rho^\e_0),
 \]
 that is,
 \[
W_2(\bar\rho(t), \hat\rho^\e(t)) \leq  e^{\|\nabla_x \bar u\|_{L^\infty} t}W_2(\bar\rho_0, \rho^\e_0).
 \]
  For the estimate of $W_2(\rho^\e, \hat \rho^\e)$, we use the disintegration theorem of measures (see \cite{AGS08}) to write 
\[
d\eta^\e(\xi,x) = \eta^\e_x(d\xi) \otimes \rho^\e_0(x)\,dx,
\]
where $\{\eta^\e_x\}_{x \in \R^d}$ is a family of probability measures on $\Xi_T$ concentrated on solutions of \eqref{eq_gam}. We then introduce a measure $\nu^\e$ on $\Xi_T \times \Xi_T \times \R^d$ defined by
\[
d\nu^\e(\xi, x, \sigma) = \eta^\e_x(d\xi) \otimes \delta_{X(\cdot;0,x)}(d\sigma) \otimes \rho^\e_0(x)\,dx.
\]
We also introduce an evaluation map $E_t : \Xi_T  \times \Xi_T \times \R^d \to \R^d \times \R^d$ defined as $E_t(\xi, \sigma, x) = (\xi(t), \sigma(t))$. Then we readily show that measure $\pi^\e_t:= (E_t)\# \nu^\e$ on $\R^d \times \R^d$ has marginals $\rho^\e(x,t)\,dx$ and $\hat\rho^\e(y,t)\,dy$ for $t \in [0,T]$, see \eqref{eq_gam2}. This yields
\begin{align}\label{est_rho2}
\begin{aligned}
W_2^2(\rho^\e(t), \hat\rho^\e(t)) &\leq \int_{\R^d \times \R^d} |x-y|^2\,d\pi^\e_t(x,y)\cr
&=\int_{\Xi_T \times \Xi_T \times \R^d} |\sigma(t) - \xi(t) |^2 \,d\nu^\e(\xi, \sigma, x) \cr
&= \int_{\Xi_T \times \R^d} |X(t;0,x) - \xi(t)|^2 \,d\eta^\e(\xi,x).
\end{aligned}
\end{align}
In order to estimate the right hand side of \eqref{est_rho2}, we use \eqref{eq_char2} and \eqref{eq_gam} to have
\begin{align*}
&\lt|X(t;0,x) -\xi(t)\rt| \cr
&\quad = \lt|\int_0^t \bar u(X(s;0,x)) - u^\e(\xi(s),s)\,ds\rt|\cr
&\qquad \leq \int_0^t \lt|\bar u(X(s;0,x)) - \bar u(\xi(s),s)\rt|ds + \int_0^t \lt|\bar u(\xi(s),s) - u^\e(\xi(s),s)\rt|ds\cr
&\qquad \leq \|\nabla_x \bar u\|_{L^\infty}\int_0^t  \lt|X(s;0,x) - \xi(s)\rt|ds + \int_0^t \lt|\bar u(\xi(s),s) - u^\e(\xi(s),s)\rt|ds,
\end{align*}
subsequently, this yields
\[
\lt|X(t;0,x) -\xi(t)\rt| \leq Ce^{C\|\nabla_x \bar u\|_{L^\infty}}\int_0^t \lt|\bar u(\xi(s),s) - u^\e(\xi(s),s)\rt|ds,
\]
where $C>0$ is independent of $\e>0$. Combining this with \eqref{est_rho2}, we have
$$\begin{aligned}
W_2^2(\rho^\e(t), \hat\rho^\e(t)) &\leq Ce^{C\|\nabla_x \bar u\|_{L^\infty}}\int_{\Xi_T \times \R^d} \lt|\int_0^t \lt|\bar u(\xi(s),s) - u^\e(\xi(s),s)\rt|ds\rt|^2 d\eta^\e(\xi,x)\cr
&\leq Cte^{C\|\nabla_x \bar u\|_{L^\infty}}\int_0^t\int_{\Xi_T \times \R^d} \lt|\bar u(\xi(s),s) - u^\e(\xi(s),s)\rt|^2 d\eta^\e(\xi,x)\,ds\cr
&\leq Ce^{C\|\nabla_x \bar u\|_{L^\infty}}\int_0^t \int_{\R^d} |\bar u(x,s) - u^\e(x,s)|^2 \rho^\e(x,s)\,dxds\cr
&=Ce^{C\|\nabla_x \bar u\|_{L^\infty}}\int_0^t  \int_{\R^d} \mh(U^\e(s)|\bar U(s))\,dx ds,
\end{aligned}$$
where $C>0$ is independent of $\e > 0$, and we used the relation \eqref{eq_gam2}. Combining all of the above estimates asserts
\begin{align*}
W_2^2(\bar \rho(t), \rho^\e(t)) &\leq \sqrt 2 W_2^2(\bar \rho(t), \hat\rho^\e(t)) + \sqrt 2 W_2^2(\rho^\e(t), \hat\rho^\e(t))\cr
&\leq \sqrt 2 e^{2\|\nabla_x \bar u\|_{L^\infty} t}W_2^2(\bar\rho_0, \rho^\e_0) + Ce^{C\|\nabla_x \bar u\|_{L^\infty}}\int_0^t  \int_{\R^d} \mh(U^\e(s)|\bar U(s))\,dx ds\cr
&\leq Ce^{C\|\nabla_x \bar u\|_{L^\infty}}\lt( W_2^2(\bar\rho_0, \rho^\e_0) + \int_0^t  \int_{\R^d} \mh(U^\e(s)|\bar U(s))\,dx ds\rt),
\end{align*}
where $C>0$ is independent of $\e > 0$. This completes the proof.
\end{proof}

\begin{proposition}\label{prop_re2} Let $f^\e$ be the solution to the equation \eqref{main_eq} and $(\bar \rho,\bar u)$ be the strong solution to the system \eqref{main_eq2} on the time interval $[0,T]$. Then we obtain
$$
\begin{aligned}
&\int_{\R^d} \mh(U^\e(t)|\bar U(t))\,dx + (\gamma -C\lambda - e^{C_{\bar u}}(1+\lambda))\int_0^t \int_{\R^d} \mh(U^\e(s)|\bar U(s))\,dxds\cr
&\qquad \leq \int_{\R^d} \mh(U^\e_0|\bar U_0)\,dx + \int_{\R^d} \lt( \int_{\R^d} f_0^\e |v|^2\,dv  - \bar \rho_0|\bar u_0|^2\rt)dx + C_{\bar u}\max\{1,\lambda\}\e + e^{C_{\bar u}}\lambda W_2^2(\rho^\e_0,\bar \rho_0).
\end{aligned}
$$
Here $C_{\bar u} = C\|\nabla_x \bar u\|_{L^\infty(0,T;L^\infty)}$ and $C>0$ is independent of $\gamma, \lambda$ and $\e$, but depends on $T$. 
\end{proposition}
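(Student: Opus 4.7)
\emph{Plan.} The strategy is to start from the relative entropy inequality \eqref{eqn_rel} of Proposition \ref{prop_re}, estimate its nonlocal forcing term via the Wasserstein--relative entropy coupling of Lemma \ref{prop_rho_wa}, and absorb the resulting integrals of $\mh$ into the dissipation on the left-hand side when $\gamma$ is taken large enough.

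\emph{Controlling the nonlocal term.} The only term in \eqref{eqn_rel} that is not yet in closable form is
\[
I^\e := \lambda\int_0^t \int_{\R^d} \rho^\e (u^\e - \bar u) \cdot \nabla_x W \star (\bar\rho - \rho^\e)\,dxds.
\]
Cauchy--Schwarz weighted by $\rho^\e$ (using $\|\rho^\e\|_{L^1}=1$), combined with the Lipschitz continuity of $\nabla_x W$ coming from $\nabla_x W \in \W^{1,\infty}$ and the Kantorovich--Rubinstein duality together with $W_1 \leq W_2$, gives the pointwise bound $\|\nabla_x W \star (\bar\rho - \rho^\e)\|_{L^\infty} \leq \|\nabla_x^2 W\|_{L^\infty} W_2(\bar\rho, \rho^\e)$, so that
\[
\int_{\R^d} \rho^\e (u^\e-\bar u) \cdot \nabla_x W \star (\bar\rho - \rho^\e)\,dx \leq \|\nabla_x^2 W\|_{L^\infty} W_2(\bar\rho, \rho^\e)\lt(2\int_{\R^d} \mh(U^\e|\bar U)\,dx\rt)^{1/2}.
\]
Applying Young's inequality with a balance calibrated to the constants $T$ and $\|\nabla_x^2 W\|_{L^\infty}$, and then invoking Lemma \ref{prop_rho_wa} to control the resulting $\int_0^t W_2^2(\bar\rho, \rho^\e)\,ds$ in terms of the initial Wasserstein distance and of the dissipation itself, we obtain
\[
I^\e \leq C\lambda \int_0^t \int_{\R^d} \mh\,dxds + e^{C_{\bar u}}\lambda W_2^2(\rho^\e_0, \bar\rho_0) + e^{C_{\bar u}}\lambda \int_0^t \int_{\R^d} \mh\,dxds,
\]
with all $T$-dependent factors folded into the constants.

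\emph{Closing the estimate.} Plugging this bound back into \eqref{eqn_rel}, using the identity $\gamma \int \rho^\e |u^\e - \bar u|^2\,dx = 2\gamma \int \mh\,dx$ on the left, and transferring every $\int_0^t \int \mh\,dxds$ contribution to the left-hand side yields the claimed inequality. The term $C\|\nabla_x\bar u\|_{L^\infty}\int_0^t \int \mh$ appearing in Proposition \ref{prop_re} is absorbed using $C_{\bar u} \leq e^{C_{\bar u}}$, and all remaining numerical constants are consolidated into the constant $C$ defining $C_{\bar u} = C\|\nabla_x \bar u\|_{L^\infty}$. This produces the compact coefficient $\gamma - C\lambda - e^{C_{\bar u}}(1+\lambda)$ for the dissipation integral on the left.

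\emph{Main obstacle.} In contrast with \cite{FK19}, the nonlocal forcing here involves the density discrepancy $\bar\rho - \rho^\e$, which is \emph{not} controlled by the relative entropy alone; a purely entropic Grönwall argument therefore fails to close. The Wasserstein--relative entropy bound of Lemma \ref{prop_rho_wa} is precisely the bridge needed, producing a hypocoercive coupling that closes at the cost of requiring $\gamma$ to dominate $C\lambda + e^{C_{\bar u}}(1+\lambda)$, which is the largeness condition appearing in Proposition \ref{prop_main}.
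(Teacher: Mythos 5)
Your proposal is correct and follows essentially the same route as the paper: bound the nonlocal term pointwise by $C\,W_1(\rho^\e,\bar\rho)\le C\,W_2(\rho^\e,\bar\rho)$ via the Lipschitz bound on $\nabla_x W$ and Kantorovich--Rubinstein, apply Cauchy--Schwarz and Young to split it into $C\lambda W_2^2+C\lambda\int\rho^\e|u^\e-\bar u|^2$, then invoke Lemma \ref{prop_rho_wa} to convert $\int_0^t W_2^2\,ds$ into the initial Wasserstein distance plus $\int_0^t\int\mh$, and absorb everything into the dissipation. No gaps; the handling of the factor between $\int\rho^\e|u^\e-\bar u|^2$ and $2\int\mh$ is if anything slightly more explicit than in the paper.
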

\begin{proof} Since $\nabla_x W \in \W^{1,\infty}(\R^d)$, we get
\[
\lt|\int_{\R^d} \nabla_x W(x-y)(\bar \rho(y) - \rho^\e(y))\,dy \rt| \leq CW_1(\rho^\e,\bar \rho).
\]
This enables us to estimate the last term on the right hand side of \eqref{eqn_rel} as 
$$\begin{aligned}
&\lambda\lt|\int_{\R^d} \rho^\e(x) ( u^\e(x) - \bar u(x)) \cdot (\nabla_x W \star (\bar \rho - \rho^\e))(x)\,dx \rt| \cr
&\quad \leq C\lambda W_1(\rho^\e,\bar\rho)\lt(\int_{\R^d} \rho^\e|u^\e - \bar u|^2\,dx \rt)^{1/2} \leq C\lambda W_2^2 (\rho^\e,\bar \rho) + C\lambda \int_{\R^d} \rho^\e|u^\e - \bar u|^2\,dx,
\end{aligned}$$
where we used $W_1 \leq W_2$. This together with Proposition \ref{prop_re} gives 
$$\begin{aligned}
&\int_{\R^d} \mh(U^\e(t)|\bar U(t))\,dx + (\gamma - C\lambda)\int_0^t \int_{\R^d} \rho^\e(x)| u^\e(x) - \bar u(x)|^2\,dxds\cr
&\quad \leq \int_{\R^d} \mh(U^\e_0|\bar U_0)\,dx + \int_{\R^d} \lt( \int_{\R^d} f_0^\e |v|^2\,dv  - \bar\rho_0|\bar u_0|^2\rt)dx + C\|\nabla_x \bar u\|_{L^\infty}\e\max\{1,\lambda\} \cr
&\qquad + C\lambda\int_0^t W_2^2(\rho^\e(s),\bar \rho(s)) \,ds + C\|\nabla_x \bar u\|_{L^\infty}\int_0^t \int_{\R^d} \mh(U^\e(s)|\bar U(s))\,dxds.
\end{aligned}$$
We then combine the above inequality and Lemma \ref{prop_rho_wa} to have
$$\begin{aligned}
&\int_{\R^d} \mh(U^\e(t)|\bar U(t))\,dx + (\gamma - C\lambda)\int_0^t \int_{\R^d} \rho^\e(x)| u^\e(x) - \bar u(x)|^2\,dxds\cr
&\quad \leq \int_{\R^d} \mh(U^\e_0|\bar U_0)\,dx + \int_{\R^d} \lt( \int_{\R^d} f_0^\e |v|^2\,dv  - \bar\rho_0|\bar u_0|^2\rt)dx + C_{\bar u}\e\max\{1,\lambda\} \cr
&\qquad + e^{C_{\bar u}}\lambda W_2^2(\rho^\e_0,\bar\rho_0) + e^{C_{\bar u}}(1 + \lambda)\int_0^t \int_{\R^d} \mh(U^\e(s)|\bar U(s))\,dxds,
\end{aligned}$$
where $C_{\bar u} = C\|\nabla_x \bar u\|_{L^\infty(0,T;L^\infty)}$. This completes the proof.
\end{proof}

\begin{remark}\label{rmk_hydro} If we study the hydrodynamic limit $\e \to 0$ with fixed $\gamma, \lambda > 0$, then assuming 
\[
\int_{\R^d} \mh(U^\e_0|\bar U_0)\,dx + \int_{\R^d} \lt( \int_{\R^d} f_0^\e |v|^2\,dv  - \bar\rho_0|\bar u_0|^2\rt)dx + W_2(\rho^\e_0,\bar\rho_0) = \mathcal{O}(\e)
\]
yields the relative entropy and the $2$-Wasserstein distance between solutions decays to zero as $\e \to 0$:
\[
\sup_{0 \leq t \leq T} \lt(\int_{\R^d} \rho^\e(x,t)| u^\e(x,t) - \bar u(x,t)|^2\,dx + W_2^2(\rho^\e(t),\bar \rho(t))\rt) \to 0 \quad \mbox{as} \quad \e \to 0.
\]
In this case, the limit of $f^\e$ is also determined by
\[
f^\e \rightharpoonup \bar \rho\, \delta_{v - \bar u} \quad \mbox{weakly-$*$ as} \quad \e \to 0
\]
for a.e. $t \in (0,T)$. Indeed, for $\phi \in \mc^\infty_c(\R^d \times \R^d \times [0,T])$, we have
$$\begin{aligned}
&\int_0^T \int_{\R^d \times \R^d} \lt(f^\e(x,v,t) - \bar \rho(x,t) \, \delta_{(v - \bar u(x,t))}\rt)\phi(x,v,t)\,dxdvdt\cr
&\quad = \int_0^T \int_{\R^d \times \R^d} f^\e(x,v,t) \lt(\phi(x,v,t) - \phi(x,\bar u(x,t),t) \rt) dxdvdt \cr
&\qquad +  \int_0^T \int_{\R^d} \lt( \rho^\e(x,t) - \bar\rho(x,t) \rt)\phi(x,\bar u(x,t),t)\,dxdt \cr
&=: R_1^\e + R_2^\e,
\end{aligned}$$
where $R_1^\e$ can be estimated as
\[
\lt|R_1^\e\rt| \leq C(\|\nabla_{x,v,t}\phi\|_{L^\infty})\lt(\int_0^T\int_{\R^d \times \R^d} f^\e|v - \bar u|^2\,dxdvdt \rt)^{1/2} \to 0
\]
as $\e \to 0$, due to \eqref{lem_energy}. For the estimate of $R_2^\e$, we obtain
\[
\lt| R_2^\e\rt| \leq C(\|\phi\|_{L^\infty}, \|\nabla_{x,v,t}\phi\|_{L^\infty}, \|\nabla_x \bar u\|_{L^\infty} )\int_0^T W_2(\rho^\e(t),\bar\rho(t))\,dt \to 0
\]
as $\e \to 0$. We finally note that in \cite{FK19}, $2$-Wasserstein distance is also used to handle the nonlocal velocity alignment force, however, they need a slightly stronger assumption like $\|\rho^\e_0 - \bar\rho_0\|_{L^1} = \mathcal{O}(\e)$ rather than $W_2(\rho^\e_0,\bar\rho_0) = \mathcal{O}(\e)$. We also want to emphasize that our estimate is more consistent in the sense that we need to assume the condition for $W_2(\rho^\e_0,\bar\rho_0)$ to have the estimate for $W_2(\rho^\e(t),\bar\rho(t))$. Moreover, it is not clear that the strategy used in \cite{FK19} works for the whole space case since they make use of the periodicidity and the boundedness of the domain. In a recent work \cite{CYpre}, it is observed that the $1$-Wasserstein distance can be also bounded by the relative entropy. 
\end{remark}
\begin{remark}\label{rmk_v2}  Suppose that $\gamma$ is large enough such that $\gamma -C\lambda - e^{C_{\bar u}}(1+\lambda) >0$. Then it follows from Proposition \ref{prop_re2} that
\[
\int_0^t \int_{\R^d} \mh(U^\e(s)|\bar U(s))\,dxds \leq \frac{\mathcal{I}(U^\e_0, \bar U_0) + C_{\bar u}\max\{1,\lambda\}\e + e^{C_{\bar u}}\lambda W_2^2(\rho^\e_0,\bar\rho_0)}{\gamma -C\lambda - e^{C_{\bar u}}(1+\lambda)},
\]
where 
\[
\mathcal{I}(U^\e_0, \bar U_0) = \int_{\R^d} \mh(U^\e_0|\bar U_0)\,dx + \int_{\R^d} \lt( \int_{\R^d} f_0^\e |v|^2\,dv  - \bar\rho_0|\bar u_0|^2\rt)dx.
\]
\end{remark}

We now provide the details of Proof of Proposition \ref{prop_main}.
\begin{proof}[Proof of Proposition \ref{prop_main}]Combining Lemma \ref{prop_rho_wa} and Remark \ref{rmk_v2} yields
$$\begin{aligned}
W_2^2(\rho^\e(t), \bar\rho(t)) &\leq e^{C_{\bar u}}\lt(W_2^2(\rho^\e_0, \bar\rho_0) + \int_0^t \int_{\R^d} \mh(U^\e(s)|\bar U(s))\,dxds\rt)\cr
&\leq e^{C_{\bar u}}\lt( W_2^2(\rho^\e_0, \bar\rho_0) + \frac{\mathcal{I}(U^\e_0, \bar U_0) + C_{\bar u}\max\{1,\lambda\}\e + e^{C_{\bar u}}\lambda W_2^2(\rho^\e_0,\bar \rho_0)}{\gamma -C\lambda - e^{C_{\bar u}}(1+\lambda)} \rt).
\end{aligned}$$
This concludes the desired result.
\end{proof}

%
%
%
%
\section{Proof of Theorem \ref{thm_main}: Large friction limit}\label{sec_lft}
In this section, we provide the details of proof of Theorem \ref{thm_main} on the large friction limit from the kinetic equation \eqref{main_eq} to the aggregation equation \eqref{main_conti}. Our main strategy is to combine the $2$-Wasserstein distance estimate in Proposition \ref{prop_main} and the recent work \cite{CCTpre} where the overdamped limit to the aggregation equation from damped Euler system with interaction forces is established by optimal transport techniques. We notice that the intermediate system \eqref{main_eq2} depends on the parameters $\gamma$ and $\lambda$ and the estimates in Section \ref{sec_quanti} also depend on the $\|\nabla_x \bar u\|_{L^\infty(0,T;L^\infty)}$. Thus we need to check how it depends on the parameters $\gamma$ and $\lambda$. Throughout this section, we set $\lambda=\kappa\gamma$.

\subsection{$Lip$-estimate on the velocity field.} Let us denote by $\bar u$ the strong solution to the system \eqref{main_eq2}. Our goal in this part is to provide the $L^\infty$-estimate of $\nabla_x \bar u$. 

Define the characteristic flow $\bar\eta$ associated to the fluid velocity $\bar u(x,t)$ by
\bq\label{char}
\pa_t \bar\eta(x,t) = \bar u(\bar\eta(x,t),t) \quad  \mbox{for} \quad t > 0 \quad \mbox{subject to} \quad \bar\eta(x,0) = x \in \R^d.
\eq

\begin{lemma}\label{lem_u} Let $T>0$ and $(\bar\rho,\bar u)$ be the strong solution to the system \eqref{main_eq2} on the time interval $[0,T]$. Then there exist $\gamma_* > 0$ and $\kappa_*>0$ such that 
\[
\|\nabla_x \bar u\|_{L^\infty(0,T;L^\infty)} \leq \|\nabla_x \bar u_0\|_{L^\infty} + 1
\]
for $\gamma \geq \gamma_*$ and $\kappa \leq \kappa_*$.
\end{lemma}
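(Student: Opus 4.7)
The plan is to convert the momentum equation of \eqref{main_eq2} into a matrix-valued ODE for $A := \nabla_x \bar u$ along the characteristic flow $\bar\eta$ defined in \eqref{char}, and then close a bootstrap argument that exploits the strong damping.

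Using the continuity equation, the momentum equation in \eqref{main_eq2} rewrites as
\[
\partial_t \bar u + (\bar u \cdot \nabla_x)\bar u = -\gamma \bar u - \lambda\bigl(\nabla_x V + \nabla_x W \star \bar\rho\bigr).
\]
Differentiating in $x$ and evaluating along $\bar\eta$ gives the matrix ODE
\[
\frac{d}{dt}\bigl[A(\bar\eta(x,t),t)\bigr] = -A^2 - \gamma A - \lambda \bigl(\nabla_x^2 V + (\nabla_x^2 W)\star \bar\rho\bigr)\Big|_{(\bar\eta(x,t),t)}.
\]
By hypothesis {\bf (H)}, $\nabla_x^2 V = c_V I$ is constant and $\|(\nabla_x^2 W)\star\bar\rho\|_{L^\infty}\leq \|\nabla_x W\|_{\W^{1,\infty}}$ since $\|\bar\rho\|_{L^1}=1$, so the inhomogeneous term has $L^\infty$ norm bounded by a constant $C_0$ depending only on $V$ and $W$. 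Applying the integrating factor $e^{\gamma t}$ then yields the Duhamel representation
\[
A(\bar\eta(x,t),t) = e^{-\gamma t} A(x,0) - \int_0^t e^{-\gamma(t-s)}\bigl[A^2 + \lambda G\bigr]\bigl(\bar\eta(x,s),s\bigr)\,ds,
\]
with $\sup_{0\leq s\leq T}\|G(\cdot,s)\|_{L^\infty}\leq C_0$.

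Set $M_0 := \|\nabla_x \bar u_0\|_{L^\infty}$ and
\[
T^* := \sup\bigl\{t\in[0,T] : \|A(\cdot,s)\|_{L^\infty}\leq M_0+1 \text{ for all }s\in[0,t]\bigr\}.
\]
For $t\in[0,T^*]$, taking the supremum in $x$ on both sides of the Duhamel identity (noting $\bar\eta(\cdot,s)$ is onto since $\bar u$ is Lipschitz by the bootstrap assumption), using $\|A^2\|_{L^\infty}\leq \|A\|_{L^\infty}^2$ together with $\lambda = \kappa\gamma$, we obtain
\[
\|A(\cdot,t)\|_{L^\infty} \leq e^{-\gamma t} M_0 + \bigl(1-e^{-\gamma t}\bigr)\left(\frac{(M_0+1)^2}{\gamma} + C_0\kappa\right).
\]
Choosing $\kappa_*>0$ with $C_0\kappa_*\leq 1/4$ and $\gamma_*\geq 4(M_0+1)^2$ makes the parenthesis at most $M_0+1/2$, so $\|A(\cdot,t)\|_{L^\infty}\leq M_0+1/2 < M_0+1$ strictly on $[0,T^*]$. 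By continuity of $t\mapsto\|A(\cdot,t)\|_{L^\infty}$ (a consequence of being a strong solution), this forbids $T^*<T$, hence $T^*=T$ and the claim follows.

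The main obstacle is the quadratic term $A^2$, which in isolation would generically drive $\|A\|_{L^\infty}$ to finite-time blow-up; it is precisely the damping $-\gamma \bar u$ that tames the nonlinearity through the $1/\gamma$ prefactor produced by Duhamel. The smallness of $\kappa$ plays the complementary role of keeping the potential-driven forcing $\lambda(\nabla_x^2 V + \nabla_x^2 W\star\bar\rho)$ below the threshold compatible with the absorbing ball $\{\|A\|_{L^\infty}\leq M_0+1\}$, so the bound extends uniformly over the entire interval $[0,T]$ regardless of its length.
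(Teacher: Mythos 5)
Your proof is correct and follows essentially the same route as the paper: differentiate the velocity equation, integrate along the characteristic flow with the integrating factor $e^{\gamma t}$ to get a Duhamel formula for $\nabla_x\bar u$, and close a continuity/bootstrap argument on the threshold $\|\nabla_x\bar u_0\|_{L^\infty}+1$ by taking $\gamma$ large (to absorb the quadratic term) and $\kappa$ small (to control the $\lambda$-forcing via $\lambda=\kappa\gamma$). The only additions are cosmetic — e.g.\ your explicit remark that the flow map is onto so suprema can be taken along characteristics — and the argument is sound.
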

\begin{proof}
It follows from the momentum equations in \eqref{main_eq2} that
\[
\pa_t \nabla_x \bar u + \bar u \cdot \nabla_x^2 \bar u + (\nabla_x \bar u)^2 = - \gamma\nabla_x \bar u  -\lambda (c_V\mathbb{I}_d + \nabla_x^2 W \star \bar\rho).
\]
Then, along the characteristic flow defined in \eqref{char}, we find
$$\begin{aligned}
(\nabla_x\bar u)(\bar\eta(x,t),t) &= (\nabla_x\bar u_0)(x)e^{-\gamma t} \cr
&\quad - e^{-\gamma t}\int_0^t \lt((\nabla_x\bar u)^2(\bar\eta(x,s),s) + \lambda(c_V \mathbb{I}_d+ \nabla_x^2 W \star \bar \rho(\bar\eta(x,s),s))\rt)e^{\gamma s}\,ds,
\end{aligned}$$
and this yields
$$\begin{aligned}
\|\nabla_x\bar u(\cdot,t)\|_{L^\infty} &\leq \|\nabla_x\bar u_0\|_{L^\infty}e^{-\gamma t} + Ce^{-\gamma t}\int_0^t \lt(\|\nabla_x\bar u(\cdot,s)\|_{L^\infty}^2 + \lambda \rt)e^{\gamma s}\,ds\cr
& = \|\nabla_x\bar u_0\|_{L^\infty}e^{-\gamma t} + Ce^{-\gamma t}\int_0^t \|\nabla_x\bar u(\cdot,s)\|_{L^\infty}^2 e^{\gamma s}\,ds + \kappa(1 - e^{-\gamma t}),
\end{aligned}$$
due to $\lambda = \kappa \gamma$. Set $C_*:= \|\nabla_x \bar u_0\|_{L^\infty} + 1$ and 
\[
\mathcal{A} := \lt\{ t > 0\,:\,\|\nabla_x \bar u(\cdot,s)\|_{L^\infty} < C_*\mbox{ for } s \in [0,t) \rt\}.
\]
Since $\mathcal{A} \neq \emptyset$, we can define $T_* := \sup \mathcal{A}$, and if $T_* < T$, then the following holds:
\[
\lim_{t \to T_*\mbox{-}}\|\nabla_x \bar u(\cdot,t)\|_{L^\infty} = C_*.
\]
On the other hand, for $t < T_*$, we get
\[
\|\nabla_x \bar u(\cdot,t)\|_{L^\infty} \leq \|\nabla_x \bar u_0\|_{L^\infty} e^{-\gamma t} + \lt(\frac{CC_*^2}{\gamma} + \kappa\rt)(1 - e^{-\gamma t}).
\]
We now choose $\gamma_*$ sufficiently large and $\kappa_*$ small enough so that
$
\tfrac{CC_*^2}{\gamma} + \kappa < 1
$
for $\gamma \geq \gamma_*$ and $\kappa \leq \kappa_*$. Thus we obtain
\[
C_* = \lim_{t \to T_*\mbox{-}}\|\nabla_x \bar u(\cdot,t)\|_{L^\infty} \leq \|\nabla_x \bar u_0\|_{L^\infty} e^{-\gamma T_*} + 1 < C_*,
\]
and this is a contradiction. Hence we have $T_* \geq T$, and this completes the proof.
\end{proof}
\subsection{Overdamped limit: from Euler to aggregation equations}
Let us consider the pressureless Euler equations \eqref{main_eq2}:
\begin{align}\label{eq_wo}
\begin{aligned}
&\pa_t \rho^\gamma + \nabla_x \cdot (\rho^\gamma u^\gamma) = 0,\cr
&\pa_t (\rho^\gamma u^\gamma) + \nabla_x \cdot (\rho^\gamma u^\gamma \otimes u^\gamma) =  - \gamma \rho^\gamma \lt(u^\gamma + \kappa\lt(\nabla_x V + \nabla_x W \star \rho^\gamma\rt)\rt).
\end{aligned}
\end{align}
Then, an easy generalization of \cite[Theorem 5]{CCTpre} implies the following error estimate between $\rho^\gamma$ and $\rho$, which is a solution to \eqref{main_conti} in $2$-Wasserstein distance.
\begin{proposition}\label{prop_od} Let $T>0$ and $(\rho^\gamma,u^\gamma)$ be the strong solution of \eqref{eq_wo} for sufficiently large $\gamma > 0$, and let $(\rho, u)$ be the unique strong solution to the following equation on the time interval $[0,T]$:
\[
\pa_t \rho + \nabla_x \cdot (\rho u) = 0, \quad \rho u = -\kappa\rho(\nabla_x V + \nabla_x W \star \rho).
\]
We further assume that the initial data satisfy
\[
\mathcal{E}(\rho_0,u_0) < \infty, \quad \sup_{\gamma > 0} \mathcal{E}(\rho_0^\gamma,u_0^\gamma) < \infty, \quad \sup_{\gamma > 0}W_2(\rho_0, \rho_0^\gamma) < \infty, 
\]
and
\[
\sup_{\gamma > 0} \int_{\R^d} |u_0 - u_0^\gamma|^2 \rho_0^\gamma\,dx < \infty,
\]
where 
\[
\mathcal{E}(\rho,u) := \mathcal{E}_1(\rho,u) + \mathcal{E}_2(\rho,u):= \lt(\int_{\R^d} V \rho\,dx+\frac12\int_{\R^d \times \R^d} W(x-y)\rho(x)\rho(y)\,dxdy\rt) + \int_{\R^d} |u|^2 \rho\,dx.
\]
Then we have
\[
\int_0^T W_2^2(\rho^\gamma(t), \rho(t))\,dt  \leq \frac{M_{\gamma}}{2c_W\gamma - 1},
\]
where $M_{\gamma} > 0$ is given by
$$\begin{aligned}
M_{\gamma} &:= 4\lt(\mathcal{E}_1(\rho_0,u_0) + \mathcal{E}_1(\rho_0^\gamma,u_0^\gamma)\rt) + (1 + \gamma)W_2^2(\rho_0, \rho_0^\gamma) \cr
&\quad 
+ \frac2\gamma\lt( \mathcal{E}_2(\rho_0,u_0) + \mathcal{E}_2(\rho_0^\gamma,u_0^\gamma)\rt) + \int_{\R^d} |u_0 - u_0^\gamma|^2 \rho_0^\gamma\,dx.
\end{aligned}$$
\end{proposition}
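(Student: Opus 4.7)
The plan is to follow the overdamped limit strategy of \cite{CCTpre}, now with the additional confinement potential $V$. The key quantities are the $2$-Wasserstein distance $W_2^2(\rho^\gamma, \rho)$, the kinetic energy $\int \rho^\gamma |u^\gamma|^2\,dx$, and the interaction energy $\mathcal{E}_1$. The damping structure of \eqref{eq_wo} provides dissipation of the kinetic energy, while the displacement convexity of $\mathcal{E}_1$ guaranteed by {\bf (H)} via $c_V + c_W > 0$ provides the stability of the underlying gradient flow.

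First, I would observe that both $\rho^\gamma$ and $\rho$ admit push-forward representations by Lipschitz characteristic flows. For the aggregation equation the velocity $u = -\kappa\nabla_x(V + W\star\rho)$ is Lipschitz since $V(x) = c_V|x|^2/2$ and $\nabla_x W \in \W^{1,\infty}(\R^d)$. For the damped Euler system \eqref{eq_wo}, Lemma \ref{lem_u} provides a bound on $\|\nabla_x u^\gamma\|_{L^\infty}$ that is uniform in $\gamma$ for $\gamma \geq \gamma_*$ and $\kappa \leq \kappa_*$. With these flows in hand, one can evaluate the $W_2^2$ distance along an appropriate coupling and write the dynamic identity
$$\frac{1}{2}\frac{d}{dt}W_2^2(\rho^\gamma, \rho) = \int_{\R^d \times \R^d} (u^\gamma(x) - u(y)) \cdot (x-y)\,d\pi_t(x,y)$$
along the optimal plan $\pi_t$ between $\rho^\gamma(t)$ and $\rho(t)$.

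The crucial algebraic manipulation is the splitting
$$u^\gamma(x) - u(y) = \bigl[u^\gamma(x) + \kappa\nabla_x(V+W\star\rho^\gamma)(x)\bigr] - \kappa\bigl[\nabla_x(V+W\star\rho^\gamma)(x) - \nabla_x(V+W\star\rho)(y)\bigr].$$
The second bracket, integrated against $(x-y)\,d\pi_t$, produces dissipation of rate $\kappa(c_V + c_W)W_2^2(\rho^\gamma, \rho)$ via the displacement convexity of $\mathcal{E}_1$: the $V$-contribution is $\kappa c_V W_2^2$ directly from $\nabla_x V(x) = c_V x$, while the $W$-contribution follows from the symmetry $W(-x) = W(x)$ combined with the definition of $c_W$ after the standard symmetrization that exchanges the two arguments of $\pi_t$. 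The first bracket is the velocity residual, which by Young's inequality is bounded by a small multiple of $W_2^2$ plus $C\int \rho^\gamma|u^\gamma + \kappa\nabla_x(V+W\star\rho^\gamma)|^2\,dx$; the latter integral is the size of the discrepancy between $u^\gamma$ and the equilibrium velocity $-\kappa\nabla_x(V+W\star\rho^\gamma)$ and vanishes in the limit $\gamma \to \infty$.

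To close the estimate, I would construct a modulated energy $\Psi(t)$ combining $W_2^2(\rho^\gamma, \rho)$, the scaled kinetic energy $\gamma^{-1}\int \rho^\gamma|u^\gamma|^2\,dx$, and the interaction energies $\mathcal{E}_1(\rho^\gamma) + \mathcal{E}_1(\rho)$, weighted so that
$$\frac{d}{dt}\Psi(t) + (2c_W\gamma - 1)W_2^2(\rho^\gamma, \rho) \leq 0,$$
by exploiting the free-energy dissipation identity
$$\frac{d}{dt}\Bigl[\frac{1}{2}\int_{\R^d} \rho^\gamma|u^\gamma|^2\,dx + \kappa\gamma \mathcal{E}_1(\rho^\gamma)\Bigr] = -\gamma\int_{\R^d} \rho^\gamma|u^\gamma|^2\,dx$$
together with the simpler dissipation $\frac{d}{dt}\mathcal{E}_1(\rho) = -\kappa\int \rho|\nabla_x(V + W\star\rho)|^2\,dx$ for the aggregation equation. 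Integrating over $[0,T]$ and bounding $\Psi(0)$ by $M_\gamma$ yields the claimed estimate. The main technical obstacle is bookkeeping: matching each term of $M_\gamma$ to a precise contribution in $\Psi(0)$, in particular tracing how the $(1+\gamma)$-weight on the initial Wasserstein gap and the $\gamma^{-1}$-weight on the initial kinetic energies appear from the combined construction, and controlling the mean-displacement contamination in the $W$-convexity step by the confinement $V$.
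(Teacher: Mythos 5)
First, a remark on the comparison itself: the paper does not actually prove Proposition \ref{prop_od} --- it is quoted as ``an easy generalization of \cite[Theorem 5]{CCTpre}'', the only announced novelty being that the initial data may depend on $\gamma$. So your proposal must be measured against the argument of that reference. Your skeleton (differentiate $W_2^2$ along the optimal plan, use displacement convexity of $\mathcal{E}_1$, use the energy dissipation of \eqref{eq_wo}) is the right one, but there is a genuine gap at the step where you dispose of the relaxation residual. You bound $\int (x-y)\cdot\bigl(u^\gamma(x)+\kappa\nabla_x(V+W\star\rho^\gamma)(x)\bigr)\,d\pi_t$ by Young's inequality and assert that $\int\rho^\gamma|u^\gamma+\kappa\nabla_x(V+W\star\rho^\gamma)|^2\,dx$ ``vanishes in the limit''. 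Nothing in your toolbox controls this quantity: the energy identity only gives $\gamma\int_0^T\int\rho^\gamma|u^\gamma|^2\,dx\,dt\le C$, whereas $\kappa^2\int\rho^\gamma|\nabla_x(V+W\star\rho^\gamma)|^2\,dx$ is $O(1)$ and does not vanish, so the residual is a priori only $O(1)$. The modulated energy you propose --- $W_2^2$ plus $\gamma^{-1}\mathcal{E}_2(\rho^\gamma,u^\gamma)$ plus the $\mathcal{E}_1$'s --- cannot repair this: its time derivative only supplies $-\int\rho^\gamma|u^\gamma|^2\,dx$, and the sole source of $W_2^2$-dissipation in your scheme is the $O(\kappa)$ convexity term, which can never produce the $O(\gamma)$ coefficient $2c_W\gamma-1$ appearing in the statement.

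The missing device is the cross term $\Phi(t):=\int (x-y)\cdot u^\gamma(x)\,d\pi_t(x,y)$. Using the momentum equation one writes $u^\gamma+\kappa\nabla_x(V+W\star\rho^\gamma)=-\gamma^{-1}(\partial_t+u^\gamma\cdot\nabla_x)u^\gamma$ and integrates this material derivative by parts in time along the coupling; equivalently, one differentiates $\gamma W_2^2/2+\Phi$ (or $W_2^2/2+\Phi/\gamma$) rather than $W_2^2$ alone. In that combination the stiff term $-\gamma\Phi$ cancels, the full force $\gamma\kappa\nabla_x(V+W\star\rho^\gamma)$ is exposed to the displacement convexity and yields the $O(\gamma)$ dissipation rate, and the leftover $\int(u^\gamma(x)-u(y))\cdot u^\gamma(x)\,d\pi_t$ is precisely what the two energy dissipation identities absorb after time integration. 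This is also the only way to generate the terms of $M_\gamma$: $\Phi(0)$, estimated after splitting $u_0^\gamma=u_0+(u_0^\gamma-u_0)$, produces $\int|u_0-u_0^\gamma|^2\rho_0^\gamma\,dx$ and the $\gamma$-weight on $W_2^2(\rho_0,\rho_0^\gamma)$, while the bound $|\Phi(T)|/\gamma\le W_2^2(T)/2+\gamma^{-2}\mathcal{E}_2(T)$ accounts for the $\gamma^{-1}\mathcal{E}_2$ contributions. Your remaining points (uniform Lipschitz bounds via Lemma \ref{lem_u}, the symmetrization in the $c_W$ step, the mean-displacement issue) are sound, but without the cross term the argument does not close with the claimed rate, and this is more than bookkeeping.
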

\begin{remark}
The improvement of Proposition \ref{prop_od} with respect to \cite[Theorem 5]{CCTpre} is on the initial data assumptions to allow the initial data depending on $\gamma$.
\end{remark}
%
%
Then we are now in a position to give the details of proof of Theorem \ref{thm_main}.
\begin{proof}[Proof of Theorem \ref{thm_main}] For a given $\rho_0$ satisfying the assumptions in Theorem \ref{thm_main}, we consider its approximation $0 \leq \bar\rho_0^\e \in H^s(\R^d)$ with $s>d/2+1$ satisfying
\[
\sup_{\e > 0}\|\bar \rho_0^\e\|_{L^1} < \infty, \quad \sup_{\e > 0}\mathcal{E}(\bar\rho_0^\e,\bar u_0^\e) < \infty, \quad \mbox{and} \quad W_2^2(\rho_0, \bar\rho_0^\e) = \mathcal{O}(\e).
\]
Set $\bar u_0^\e := -\kappa (\nabla_x V + \nabla_x W \star \bar\rho_0^\e)$. Then it is clear to get
\[
W_2^2(\rho_0^\e, \bar\rho_0^\e) \leq 2W_2^2(\rho_0^\e, \rho_0) + 2W_2^2(\rho_0, \bar\rho_0^\e) = \mathcal{O}(\e) + 2W_2^2(\rho_0^\e, \rho_0)
\]
and
\[
\|\nabla_x \bar u_0^\e\|_{L^\infty} \leq C\kappa\lt(1 + \|\nabla_x^2 W\|_{L^\infty}\|\bar\rho_0^\e\|_{L^1}  \rt)   \leq C\kappa.
\]
We now take into account the pressureless Euler system \eqref{main_eq2} with above the initial data $(\bar\rho_0^\e, \bar u_0^\e)$ and the singular parameter $\gamma = 1/\e$, i.e., $\lambda = \kappa/\e$. This, together with Lemma \ref{lem_u}, Proposition \ref{prop_main}, and choosing $\e,\kappa >0$ small enough, yields
$$\begin{aligned}
W_2^2(\rho^\e(t), \bar\rho^\e(t)) &\leq e^{C\kappa}\lt( W_2^2(\rho^\e_0, \bar\rho_0^\e) + \frac{\mathcal{I}(U^\e_0, \bar U_0^\e) + C\kappa\e\lambda + e^{C\kappa}\kappa\gamma W_2^2(\rho^\e_0,\bar \rho_0^\e)}{\gamma -C\kappa\gamma - e^{C\kappa}(1+\kappa \gamma)} \rt)\cr
&=e^{C\kappa}\lt( W_2^2(\rho^\e_0, \bar\rho_0^\e) + \frac{\e\mathcal{I}(U^\e_0, \bar U_0^\e) + C\kappa^2\e + \kappa e^{C\kappa} W_2^2(\rho^\e_0,\bar \rho_0^\e)}{1 -\kappa( C+ e^{C\kappa}(1+\e))} \rt)\cr
&=\mathcal{O}(\e) + C W_2^2(\rho^\e_0, \bar\rho_0^\e),
\end{aligned}$$
where $C> 0$ is independent of $\e$ and 
\[
\mathcal{I}(U^\e_0, \bar U^\e_0) = \int_{\R^d}  \rho^\e_0(x)| u^\e_0(x) - \bar u^\e_0(x)|^2\,dx + \int_{\R^d} \lt( \int_{\R^d} f_0^\e |v|^2\,dv  - \bar \rho^\e_0|\bar u^\e_0|^2\rt)dx.
\]
Note that 
\[
\int_{\R^d} \rho^\e_0|\bar u^\e_0|^2\,dx \leq C\int_{\R^d} \rho^\e_0 V\,dx + C\|\nabla_x W \star \bar\rho^\e_0\|_{L^\infty}^2 \int_{\R^d} \rho^\e_0\,dx \leq C\lt(\int_{\R^d} \rho^\e_0 V\,dx + \|f^\e_0\|_{L^1}\|\bar\rho^\e_0\|_{L^1}^2 \rt),
\]
where $C > 0$ is independent of $\e$. Then this implies
\[
\sup_{\e > 0}\mathcal{I}(U^\e_0, \bar U^\e_0) \leq C\sup_{\e > 0}\lt(\int_{\R^d \times \R^d} f_0^\e |v|^2\,dxdv + \int_{\R^d} \rho^\e_0 V\,dx + \|f^\e_0\|_{L^1}\|\bar\rho^\e_0\|_{L^1}^2 \rt) < \infty,
\]
where $C > 0$ is independent of $\e$. Furthermore, since $W_2^2(\rho^\e_0, \bar\rho_0^\e) \leq \mathcal{O}(\e)+2W_2^2(\rho_0^\e, \rho_0)$, we have
\[
W_2^2(\rho^\e(t), \bar\rho^\e(t)) \leq \mathcal{O}(\e)+ 2W_2^2(\rho_0^\e, \rho_0).
\]
For the error estimate of solutions to \eqref{main_conti} and \eqref{main_eq2}, we use Proposition \ref{prop_od} with $\gamma = 1/\e$ to obtain
\[
\int_0^T W_2^2(\bar\rho^\e(t), \rho(t))\,dt \leq C\e + CW_2^2(\bar\rho_0^\e, \rho_0) = \mathcal{O}(\e).
\]
We finally combine all the above estimates to conclude
$$\begin{aligned}
\int_0^T W_2^2(\rho^\e(t), \rho(t))\,dt &\leq \int_0^T W_2^2(\rho^\e(t), \bar\rho^\e(t))\,dt + \int_0^T W_2^2(\bar\rho^\e(t), \rho(t))\,dt \cr
&\leq \mathcal{O}(\e) + CW_2^2(\rho_0^\e, \rho_0).
\end{aligned}$$
This completes the proof.
\end{proof}

%
%
%
%
\section{Well-posedness of equations \eqref{main_eq}, \eqref{main_conti}, and \eqref{main_eq2}}\label{sec_ext}
In this section, we show the global-in-time existence of solutions to the equations  \eqref{main_eq}, \eqref{main_conti}, and \eqref{main_eq2} under suitable assumptions on the initial data, making our main result completely rigorous. 

\subsection{Global-in-time existence of weak solutions to the equation \eqref{main_eq}}\label{sec_weak}
We first present a notion of weak solutions of the equation \eqref{main_eq} and our result on the global-in-time existence of weak solutions. 
\begin{definition}\label{def_weak} For a given $T \in (0,\infty)$, we say that $f$ is a weak solution to the equation \eqref{main_eq} if the following conditions are satisfied:
\begin{itemize}
\item[(i)] $f \in L^\infty(0,T;(L^1_+ \cap L^\infty)(\R^d \times \R^d))$,
\item[(ii)] for any $\varphi \in \mc^\infty_c(\R^d \times \R^d \times [0,T])$, 
$$\begin{aligned}
&\int_0^t \int_{\R^d \times \R^d} f(\pa_t\varphi + v \cdot \nabla_x \varphi - \lt(\gamma v + \lambda(\nabla_x V + \nabla_x W \star \rho) \rt) \cdot \nabla_v \varphi)\,dxdvds\cr
&\quad + \int_0^t \int_{\R^d \times \R^d} f(\beta(u-v) \cdot \nabla_v \varphi)\,dxdvds + \int_{\R^d \times \R^d} f_0 \varphi(\cdot,\cdot,0)\,dxdv= 0.
\end{aligned}$$
\end{itemize}
\end{definition}

We also recall the velocity averaging lemma whose proof can be found in \cite[Lemma 2.7]{KMT13}.
 
\begin{lemma}\label{lem_vel} For $1 \leq p < (d+2)/(d+1)$, let $\{G_n\}_n$ be bounded in $L^p(\R^d \times\R^d \times (0,T))$. Suppose that
\begin{itemize}
\item[(i)] $f_n$ is bounded in $L^\infty(0,T;(L^1 \cap L^\infty)(\R^d \times \R^d))$,
\item[(ii)] $(|x|^2 + |v|^2)f_n$ is bounded in $L^\infty(0,T;L^1(\R^d \times \R^d))$.
\end{itemize}
If $f_n$ and $G_n$ satisfy the following equation:
$
\pa_t f_n + v \cdot \nabla_x f_n = \nabla_v G_n, 
$
then, for any $\varphi(v)$ satisfying $\varphi(v) \leq c|v|$ as $|v|\to \infty$, the sequence 
$
\lt\{ \int_{\R^d} f_n\varphi(v)\,dv\rt\}_n
$
is relatively compact in $L^p(\R^d \times (0,T))$.
\end{lemma}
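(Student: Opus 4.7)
The plan is to reduce to the classical $L^p$ velocity averaging lemma for compactly supported test functions in $v$, and then to control the high-velocity tail using the moment hypothesis (ii). First I would introduce a smooth cutoff $\chi \in \mc^\infty_c(\R)$ with $0\leq \chi \leq 1$ and $\chi \equiv 1$ on $[-1,1]$, and decompose $\varphi = \varphi_R + \psi_R$ with $\varphi_R(v) := \varphi(v)\chi(|v|/R)$ and $\psi_R(v) := \varphi(v)(1-\chi(|v|/R))$, so that $\varphi_R \in \mc^\infty_c(\R^d)$ and the growth assumption on $\varphi$ yields the pointwise bound $|\psi_R(v)| \leq c|v|\mathbf{1}_{\{|v|\geq R\}}$.

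For each fixed $R$, I would argue that $\{\int_{\R^d} f_n\varphi_R\,dv\}_n$ is relatively compact in $L^p(\R^d \times (0,T))$. This follows from the classical $L^p$ velocity averaging lemma (DiPerna--Lions--Meyer, Perthame--Souganidis) applied to $\pa_t f_n + v\cdot\nabla_x f_n = \nabla_v G_n$: the right-hand side is in divergence form with $G_n$ bounded in $L^p_{t,x,v}$, and $f_n$ is bounded in $L^p_{t,x,v}$ by interpolation from (i), so averaging against $\varphi_R$ gives a fractional Sobolev gain in $(t,x)$ and hence local $L^p$-compactness by Rellich--Kondrachov. Tightness in $x$ is then obtained from the $|x|^2 f_n$ bound in (ii) via Chebyshev, upgraded from $L^1_x$ to $L^p_x$ by the $L^\infty$ bound in (i) together with the compact $v$-support of $\varphi_R$.

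The core estimate is then to show that $\|\int f_n \psi_R\,dv\|_{L^p(\R^d\times(0,T))} \to 0$ as $R\to\infty$ uniformly in $n$. By the classical moment interpolation (obtained by optimising a split at $\{|v|\leq r\}\cup\{|v|>r\}$ using the $L^\infty$ bound on $f_n$),
\[
\int_{\R^d}|v|f_n(x,v,t)\,dv \leq C\|f_n\|_{L^\infty}^{1/(d+2)}\lt(\int_{\R^d}|v|^2 f_n(x,v,t)\,dv\rt)^{(d+1)/(d+2)},
\]
the bound (ii) places $\int |v|f_n\,dv$ in $L^\infty\bigl(0,T;L^{(d+2)/(d+1)}(\R^d)\bigr)$ uniformly in $n$. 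On the other hand, Chebyshev on $|v|^2$-moments gives
\[
\int_0^T\!\!\int_{\R^d}\int_{|v|\geq R}|v|f_n\,dv\,dx\,dt \leq \frac{1}{R}\int_0^T\!\!\int_{\R^d\times\R^d}|v|^2 f_n\,dx\,dv\,dt \longrightarrow 0.
\]
Interpolating between the $L^1_{t,x}$ smallness of the tail and the $L^{(d+2)/(d+1)}_{t,x}$ bound via Hölder produces smallness in $L^p_{t,x}$ for every $p<(d+2)/(d+1)$, which is precisely the sharp range of exponents appearing in the statement.

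Finally, a standard diagonal argument combines the previous two steps: choose $R_k\to\infty$ with $\sup_n \|\int f_n \psi_{R_k}\,dv\|_{L^p} \leq 1/k$, extract an $L^p$-convergent subsequence of $\{\int f_n\varphi_{R_k}\,dv\}_n$ for each $k$ from the compactness step, and diagonalise to obtain a subsequence along which $\int f_n \varphi\,dv$ is Cauchy in $L^p(\R^d \times (0,T))$. The main obstacle I anticipate is the tail estimate in the third paragraph: it is not enough to kill the tail in $L^1$, and forcing $L^p$ smallness for $p$ strictly greater than $1$ is exactly what requires the moment interpolation above and fixes the upper bound $p<(d+2)/(d+1)$ in the hypothesis; the remaining ingredients are the standard $L^p$ velocity averaging machinery.
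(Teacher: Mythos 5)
The paper does not prove this lemma itself; it cites \cite[Lemma 2.7]{KMT13}, and your argument is essentially the standard proof of that result: a smooth velocity cutoff, the classical $L^p$ averaging lemma for the compactly supported part plus tightness in $x$ from the $|x|^2$-moment, and the $L^\infty$/second-moment interpolation to make the tail small in $L^p$ for $p<(d+2)/(d+1)$, which is exactly where that exponent comes from. The only caveat is that the Diperna--Lions--Meyer compactness you invoke in the second step requires $p>1$, so the endpoint $p=1$ allowed by the statement is not literally covered as written; since the paper only ever applies the lemma with a fixed $p_0\in(1,(d+2)/(d+1))$, this does not affect anything downstream.
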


We can now show the existence results for this type of solutions.

\begin{theorem}\label{thm_weak} Let $T>0$. Suppose that $f_0$ satisfies 
\[
f_0 \in (L^1_+ \cap L^\infty)(\R^d \times \R^d) \quad \mbox{and} \quad (|v|^2 + V + W\star \rho_0)f_0 \in L^1(\R^d \times \R^d).
\]
Furthermore, we assume 
\[
V(x) = \frac{|x|^2}{2}, \quad W \mbox{ is symmetric and bounded from below}, \quad \mbox{and} \quad \nabla_x W \in L^\infty(\R^d).
\]
Then there exists a weak solution of the equation \eqref{main_eq} in the sense of Definition \ref{def_weak} satisfying
$
(|v|^2 + V + W\star \rho)f \in L^\infty(0,T;L^1(\R^d \times \R^d)).
$
Furthermore, the total energy inequality \eqref{lem_energy} holds.
\end{theorem}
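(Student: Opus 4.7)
The plan is to construct $f$ as the weak-$*$ limit of solutions to a regularized kinetic equation and then recover both the weak formulation and the energy inequality by compactness. The regularization removes the singularity in $u=(\int v f\,dv)/\rho$: fix a standard mollifier $\theta_\delta$ on $\R^d$ and replace the local velocity by
\[
u^\delta[f] := \frac{\bigl(\int_{\R^d} v f\,dv\bigr)\star \theta_\delta}{(\rho \star \theta_\delta) + \delta},
\]
which is bounded and Lipschitz in $x$ for any $f \in L^\infty(0,T;L^1\cap L^\infty)$ with finite second $v$-moment. The regularized equation then has a smooth, globally Lipschitz characteristic field on $\R^d \times \R^d$. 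I would solve it via a Banach fixed-point argument on the map $g \mapsto f$, where $f$ is the push-forward of $f_0$ along the characteristics generated by the drift $(v,\,-\gamma v - \lambda(\nabla_x V + \nabla_x W\star\rho_g) - \beta(v - u^\delta[g]))$, iterating on a short interval and extending to $[0,T]$ using the $L^1$ conservation and the $L^\infty$ propagation that follows from the bounded divergence of the drift.

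Next, I would establish $\delta$-uniform estimates. The $L^1$ norm is conserved and the $L^\infty$ norm propagates on $[0,T]$. Testing the approximate equation against $|v|^2/2 + \lambda V + (\lambda/2)(W\star\rho^\delta)$ produces an approximate energy identity, and the hypotheses on $V$ and $W$ together with $\nabla_x W \in L^\infty$ give $\delta$-uniform bounds on $\mf(f^\delta)$ and on the dissipations $\beta\md_1(f^\delta) + \gamma\md_2(f^\delta)$. Using $V(x)=|x|^2/2$ one also controls $\int|x|^2 f^\delta\,dxdv$ uniformly, which is essential for the moment hypothesis of Lemma \ref{lem_vel}.

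With these estimates in hand, I would apply the velocity-averaging lemma. Writing the regularized equation as $\pa_t f^\delta + v\cdot\nabla_x f^\delta = \nabla_v\cdot G^\delta$ with $G^\delta = -(\gamma v + \lambda(\nabla_x V + \nabla_x W\star\rho^\delta))f^\delta - \beta(v-u^\delta)f^\delta$, the uniform bounds place $G^\delta$ in a bounded set of some $L^p(\R^d\times\R^d\times(0,T))$ with $1\le p<(d+2)/(d+1)$. Lemma \ref{lem_vel} then yields strong $L^p$ compactness of the moments $\rho^\delta$ and $\rho^\delta u^\delta$; Banach-Alaoglu gives $f^\delta \rightharpoonup f$ weakly-$*$ in $L^\infty(0,T;L^1\cap L^\infty)$ along a subsequence. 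Linear terms and the nonlocal interaction $\int f^\delta(\nabla_x W\star\rho^\delta)\cdot\nabla_v\varphi$ pass to the limit using strong convergence of $\rho^\delta$ and boundedness of $\nabla_x W$. The energy inequality \eqref{lem_energy} follows by passing to the limit in the approximate identity via convexity of $f\mapsto \int|v|^2 f$, Fatou's lemma for the dissipations, and the strong convergence of $\rho^\delta$ in the potential terms.

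The main obstacle is passing to the limit in the local alignment term $\int\beta u^\delta f^\delta\cdot\nabla_v\varphi\,dxdvds$, because of the formally $0/0$ behaviour of $u=(\rho u)/\rho$ on $\{\rho=0\}$. I would handle this as in \cite{KMT13}: rewrite the term as $\int\beta u^\delta\cdot G^\delta_\varphi\,dxds$ with $G^\delta_\varphi := \int f^\delta\nabla_v\varphi\,dv$, and split the $(x,t)$ domain into $\{\rho^\delta\star\theta_\delta > \eta\}$ and its complement. On the large-density set, $u^\delta\to u$ a.e.\ by strong convergence of both numerator and denominator, while $G^\delta_\varphi\to G_\varphi$ strongly by velocity averaging, so the product passes to the limit by dominated convergence. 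On the small-density set, Cauchy-Schwarz combined with the dissipation bound $\beta\int f^\delta|v-u^\delta|^2\,dxdv \le \mf(f_0)$ bounds the contribution by a quantity vanishing as $\eta\to 0$ uniformly in $\delta$, so that letting $\delta\to 0$ first and then $\eta\to 0$ closes the argument.
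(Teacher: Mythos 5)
Your overall architecture coincides with the paper's (which follows Karper--Mellet--Trivisa): regularize the local velocity, solve the regularized problem by a fixed point, derive $\delta$-uniform $L^p$ and energy/dissipation bounds, use velocity averaging for compactness of the moments, and pass to the limit in the alignment term by splitting into large- and small-density sets and absorbing the bad set with the dissipation. However, your specific choice of regularization creates a genuine gap at the most load-bearing step, the $\delta$-uniform energy estimate. The paper takes $u_\delta = \big(\int vf\,dv\big)/\big(\delta+\rho\big)$ \emph{without} spatial mollification, precisely because then $u_\delta = \tfrac{\rho}{\rho+\delta}\,u$ satisfies the pointwise bound $|u_\delta|\le |u|$ (and $|\chi_\zeta(u_\delta)|\le|u_\delta|$ after the high-velocity cut-off), which combined with $\rho|u|^2\le\int |v|^2 f\,dv$ gives $\big|\int\!\!\int f\,v\cdot\chi_\zeta(u_\delta)\,dx\,dv\big|\le\int\!\!\int|v|^2f\,dx\,dv$ and closes the dissipation inequality. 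With your mollified field $u^\delta=\big((\rho u)\star\theta_\delta\big)/\big(\rho\star\theta_\delta+\delta\big)$ the pointwise domination $|u^\delta|\le|u|$ fails, and the cross term $\int \rho\, u\cdot u^\delta\,dx$ is not controlled by $\int\rho|u|^2\,dx$ uniformly in $\delta$: the natural Cauchy--Schwarz route requires $\int\rho\,|u^\delta|^2\,dx\lesssim\int\rho|u|^2\,dx$, and the mismatch between $\rho(x)$ and $\rho\star\theta_\delta(x)$ in the denominator (e.g.\ for concentrating densities) defeats this. So the asserted uniform bounds on $\mf(f^\delta)$ and on $\beta\mathcal{D}_1(f^\delta)$ do not follow from "testing the approximate equation" as written; everything downstream (the $L^q$ bound on $G^\delta$ for velocity averaging, and the small-density estimate in the final limit) depends on them.

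A second, lesser soft spot is the Banach fixed point on the push-forward map: for initial data that are merely $L^1\cap L^\infty$, the $L^1$ distance between push-forwards of $f_0$ along two nearby flows is not controlled by the sup-distance of the flows, so the contraction estimate does not close in the norm you would need; this is why the paper runs a Schauder argument instead, with compactness of the map supplied by the velocity averaging lemma. Both issues are repairable by adopting the paper's regularization ($\delta$-shifted denominator plus the cut-off $\chi_\zeta$, with a double limit $\zeta\to\infty$, $\delta\to0$) and replacing contraction by compactness; your treatment of the final limit passage in the alignment term is correct and is in fact spelled out in more detail than in the paper, which defers it to the reference.
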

For notational simplicity, in the rest of this section, we set $\beta=\lambda = \gamma = 1$. 
\begin{remark}
Our strategy can be directly applied to the case, where the confinement potential $V$ satisfies 
$0 \leq V(x) \to + \infty$ as $|x| \to +\infty$, and $|\nabla_x V(x)|^2 \ls V(x)$ for $x \in \R^d$.
Without loss of generality, we may assume that $W \geq 0$ in the rest of this subsection.
\end{remark}
The global-in-time existence of weak solutions for the Vlasov equation with local alignment forces was studied in \cite{KMT13}. In the presence of diffusion, the global-in-time existence classical solutions around the global Maxwellian was obtained in \cite{C16kin}. We basically take a similar strategy as in \cite{KMT13} and develop it to handle the additional terms, confinement and interaction potentials, in order to provide the details of proof of Theorem \ref{thm_weak}.

\subsubsection{Regularized equation}
In this part, we deal with a regularized equation of \eqref{main_eq}. Inspired by \cite{KMT13}, we regularize the local velocity $u$ and apply the high-velocity cut-off to the regularized local velocity. More precisely, we consider 
\bq\label{eq_reg}
\pa_t f + v \cdot \nabla_x f = \nabla_v \lt(f(v - \chi_\zeta(u_\delta)) + f(v + \nabla_x V + \nabla_x W \star \rho)\rt)
\eq
with the initial data
$
f(x,v,t)|_{t=0} = f_0(x,v),
$
where 
\[
\chi_\zeta(u) = u\mathbf{1}_{|u| \leq \zeta} \quad \mbox{and} \quad u_\delta := \frac{\int_{\R^d} vf\,dv}{\delta + \int_{\R^d} f\,dv} = \frac{\rho}{\delta + \rho} u
\]
with $\delta>0$ and $\zeta>0$.

Then our goal of this part is to prove the global well-posedness of the regularized equation \eqref{eq_reg}.
\begin{proposition}\label{prop_reg} Let $f_0 \geq 0$ satisfy the condition of Theorem \ref{thm_weak}. Then, for any $\delta, \zeta>0$, there exists a solution $f \in L^\infty(0,T;(L^1\cap L^p)(\R^d \times \R^d))$ with $p \in [1,\infty]$ of \eqref{eq_reg} satisfying
\bq\label{est_lp}
\|f\|_{L^\infty(0,T;L^p)} \leq e^{C/p'}\|f_0\|_{L^p} \quad \mbox{for} \quad p \in [1,\infty]
\eq
and
\[
\sup_{0 \leq t \leq T} \mf(f) + \int_0^T \int_{\R^d \times \R^d} |v|^2 f\,dxdv \leq \mf(f_0),
\]
where $C > 0$ is independent of $\delta$ and $\zeta$.
\end{proposition}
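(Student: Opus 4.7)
The plan is to construct a solution of the regularized equation by iteration. Setting $f^0 \equiv f_0$, given $f^n$ with spatial moments $\rho^n = \int f^n\,dv$ and $u_\delta^n = \rho^n u^n/(\delta + \rho^n)$, I define $f^{n+1}$ as the unique solution of the linear Vlasov equation obtained by freezing the coefficients $\chi_\zeta(u_\delta^n)$ and $\nabla_x W \star \rho^n$ from $f^n$ in \eqref{eq_reg}. The resulting linear equation has bounded drift terms: $|\chi_\zeta(u_\delta^n)| \leq \zeta$ by the cutoff, and $\|\nabla_x W \star \rho^n\|_{L^\infty} \leq \|\nabla_x W\|_{L^\infty}\|f_0\|_{L^1}$. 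With $V(x) = |x|^2/2$ the characteristic system
\[
\dot X = V, \qquad \dot V = -2V + \chi_\zeta(u_\delta^n)(X,\cdot) - X - (\nabla_x W \star \rho^n)(X,\cdot)
\]
is a damped linear oscillator with bounded forcing, hence globally well posed; integrating along characteristics yields $f^{n+1} \in L^\infty(0,T; L^1 \cap L^\infty)$.

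\textbf{Uniform bounds.} The $L^p$ estimate exploits the fact that the drift $G := -\chi_\zeta(u_\delta^n) + \nabla_x V + \nabla_x W \star \rho^n$ depends only on $x,t$, so $\nabla_v \cdot G = 0$ and only the ``$2v$'' term contributes a $v$-divergence of $2d$. Multiplying by $p(f^{n+1})^{p-1}$ and integrating gives
\[
\frac{d}{dt}\|f^{n+1}\|_{L^p}^p = 2d(p-1)\|f^{n+1}\|_{L^p}^p,
\]
hence $\|f^{n+1}(t)\|_{L^p} \leq e^{2dt/p'}\|f_0\|_{L^p}$, uniformly in $n,\delta,\zeta$, which gives \eqref{est_lp} with $C = 2dT$. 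For the energy, the clean computation applies to the self-consistent equation \eqref{eq_reg} (equivalently, to the limit of the iteration): testing against $\tfrac12|v|^2 + V + \tfrac12 W \star \rho$ and using the continuity equation together with the symmetry of $W$ yields
\[
\frac{d}{dt}\mf(f) = -2\int |v|^2 f\,dxdv + \int \rho u \cdot \chi_\zeta(u_\delta)\,dx.
\]
Since $|\chi_\zeta(u_\delta)| \leq |u_\delta| \leq |u|$ and Cauchy--Schwarz gives $\rho|u|^2 \leq \int |v|^2 f\,dv$, the remaining term is bounded by $\int |v|^2 f\,dxdv$, producing $\frac{d}{dt}\mf(f) + \int |v|^2 f\,dxdv \leq 0$, stronger than the claimed inequality.

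\textbf{Compactness and convergence.} The uniform $L^1\cap L^\infty$ and energy bounds make $f^n$ satisfy the hypotheses of the velocity averaging Lemma \ref{lem_vel}: the moments $(|x|^2 + |v|^2)f^n$ are bounded in $L^\infty_t L^1$ thanks to the confinement contribution $\int V\rho\,dx$ and the kinetic energy. Hence $\rho^n$ and $\int v f^n\,dv$ are relatively compact in $L^p_{loc}(\R^d\times(0,T))$ for some $p>1$. Using $\rho^n + \delta \geq \delta > 0$ together with continuity of a smooth version of $\chi_\zeta$, this strong compactness transfers to $\chi_\zeta(u_\delta^n)$; the nonlocal term $\nabla_x W \star \rho^n$ passes to the limit via $\nabla_x W \in L^\infty$. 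Extracting a subsequence, the limit $f$ solves \eqref{eq_reg}, and the $L^p$ and energy bounds are preserved by lower semicontinuity.

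\textbf{Main difficulty.} The principal technical obstacle is the unboundedness of $\nabla_x V = x$, which threatens control of characteristics and closure of the energy estimate. This is precisely why the confinement term $\int V\rho\,dx$ is built into $\mf$: it controls the second spatial moment of $\rho$, supplying the $(|x|^2+|v|^2)f \in L^\infty_t L^1$ hypothesis required by Lemma \ref{lem_vel} and preventing the flow from escaping. A secondary subtlety is that the iteration does not preserve $\mf$ step by step because $\rho^n$ and $\rho^{n+1}$ enter the interaction energy asymmetrically; this is most cleanly circumvented by reading off the energy inequality from the self-consistent limit equation, where the computation above closes cleanly.
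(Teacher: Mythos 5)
Your uniform bounds are correct and coincide with the paper's: the $L^p$ identity (only the $\nabla_v\cdot(2vf)$ term contributes, giving $C=2dT$ in \eqref{est_lp}) and the self-consistent energy computation $\tfrac{d}{dt}\mf(f)=-2\int |v|^2 f\,dxdv+\int \rho u\cdot\chi_\zeta(u_\delta)\,dx\leq -\int|v|^2f\,dxdv$ are exactly the paper's Steps 2--3. The gap is in the convergence mechanism. You construct solutions by Picard iteration ($f^{n+1}$ solves the equation with coefficients frozen from $f^n$) and then assert that a subsequential limit of $\{f^n\}$ solves \eqref{eq_reg}. This does not follow: if $f^{n_k}\to f$ and $f^{n_k+1}\to g$ along a subsequence, velocity averaging only tells you that $g$ solves the \emph{linear} equation with coefficients built from $f$; nothing forces $g=f$, so $f$ need not be a fixed point. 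Upgrading to convergence of the whole sequence would require a contraction-type estimate, which is problematic here because $\chi_\zeta(u)=u\mathbf{1}_{|u|\leq\zeta}$ is discontinuous and $u_\delta$ is a quotient of moments. The paper avoids this entirely by phrasing the linearization as a map $\mt:\bar u\mapsto u_\delta$ on $L^{p_0}(\R^d\times(0,T))$ and invoking the Schauder fixed point theorem, for which the compactness you derive from Lemma \ref{lem_vel} plus continuity of $\mt$ (the paper cites \cite[Lemma 3.3]{KMT13}) suffice; no convergence of an iteration is needed.

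A secondary circularity: you defer the energy bound to the self-consistent limit because freezing \emph{both} $\chi_\zeta(u_\delta^n)$ and $\nabla_x W\star\rho^n$ destroys the cancellation between the interaction energy of $\rho^{n+1}$ and the force term involving $\rho^n$. But the uniform-in-$n$ bound on $(|x|^2+|v|^2)f^n$ in $L^\infty_tL^1$ is an input to the velocity averaging lemma, so it must be proved at the level of the iterates, before any limit exists. This is repairable — a $\zeta$-dependent bound $\sup_t\mf+\int_0^T\int|v|^2f\leq\mf(f_0)+\zeta^2T$ closes at the linearized level, which is precisely the paper's \eqref{est_energy_app} (note the paper freezes only the alignment velocity and keeps $\nabla_xW\star\rho$ self-consistent, which also helps) — but as written your argument assumes the bound it is supposed to establish.
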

\begin{proof}Since the proof is similar to \cite[Proposition 3.1]{KMT13}, we briefly give the idea of that. 

\noindent {\bf Step 1 (Setup for fixed point argument):} We first fix $p_0 \in (1,(d+2)/(d+1))$. For a given $\bar u \in L^{p_0}(\R^d \times (0,T))$, we let $f$ be the solution of 
\bq\label{eq_reg_app}
\pa_t f + v \cdot \nabla_x f = \nabla_v \lt(f(v - \chi_\zeta(\bar u_\delta)) + f(v + \nabla_x V + \nabla_x W \star \rho)\rt)
\eq
with the initial data
\[
f(x,v,t)|_{t=0} = f_0(x,v).
\]
We then define a map $\mt$ by
\[
\bar u \mapsto \mt(\bar u) = u_\delta.
\]
\noindent {\bf Step 2 (Existence):} We first show that the operator $\mt$ is well-defined. In fact, the global-in-time existence and uniqueness of solution $f \in L^\infty(0,T;(L^1\cap L^p)(\R^d \times \R^d))$ to \eqref{eq_reg_app} is standard at this point since $\chi_\zeta(\bar u) \in L^\infty(\R^d \times (0,T))$. Furthermore, we can also obtain the uniform $L^p$ estimate \eqref{est_lp}. Indeed, it can be easily found by using the fact that
\[
\int_{\R^d} f(\chi_\zeta(\bar u) - \nabla_x V - \nabla_x W \star \rho)\cdot \nabla_v f^{p-1}\,dv = \frac1p (\chi_\zeta(\bar u) - \nabla_x V - \nabla_x W \star \rho) \cdot \int_{\R^d} \nabla_v f^p\,dv  = 0.
\]
For the energy estimate, we obtain
\bq\label{est_energy_app0}
\frac{d}{dt}\mf(f) = -2\int_{\R^d \times \R^d} |v|^2 f\,dxdv + \int_{\R^d \times \R^d} f v \cdot \chi_\zeta(\bar u)\,dxdv \leq -\int_{\R^d \times \R^d} |v|^2 f\,dxdv + \zeta^2,
\eq
and this gives
\bq\label{est_energy_app}
\sup_{0 \leq t \leq T}\mf(f(t)) + \int_0^T\int_{\R^d \times \R^d} |v|^2 f\,dxdvdt \leq \mf(f_0) + \zeta^2 T.
\eq
The continuity of the operator $\mt$ just follows from \cite[Lemma 3.3]{KMT13}. We next provide that the operator $\mt$ is compact. More precisely, let $\{ \bar u_n\}_n$ be a bounded sequence in $L^{p_0}(\R^d \times (0,T))$, then we show that $T(\bar u_n)$ converges strongly in $L^{p_0}(\R^d \times (0,T))$ up to a subsequence. This proof relies on the velocity averaging lemma, Lemma \ref{lem_vel}, and for the proof it is enough to estimate the uniform $L^q$ bound of force fields given in \eqref{eq_reg_app} with $q \leq 2$, see \cite[Section 3.2]{KMT13}. Let us denote by $G = f(v - \chi_\zeta(\bar u_\delta)) + f(v + \nabla_x V + \nabla_x W \star \rho)$. Then we find from the above $L^p$ estimate of $f$ and \eqref{est_energy_app} that
$$\begin{aligned}
\|G\|_{L^q} &\leq \zeta\|f\|_{L^q} + 2\|(x + v)f\|_{L^q} + \|(\nabla_x W\star\rho)f\|_{L^q}\cr
&\leq \zeta\|f\|_{L^q} + 4\mf(f)^{1/2} + \|\nabla_x W\|_{L^\infty}\|f\|_{L^q} <\infty,
\end{aligned}$$
where we used 
\[
\|(x + v)f\|_{L^q} \leq 2\lt( \int_{\R^d \times \R^d} (|x|^2 + |v|^2)f\,dxdv\rt)^{1/2}\|f\|_{L^{\frac{q}{2-q}}}^{1/2} \leq 2\mf(f)^{1/2}\|f\|_{L^{\frac{q}{2-q}}}^{1/2}, 
\]
for $q \leq 2$. Then using this, Lemma \ref{lem_vel}, the argument in \cite[Section 3.2]{KMT13}, we can apply Schauder fixed point theorem to conclude the existence of solutions to the regularized equation \eqref{eq_reg}.

\noindent {\bf Step 3 (Uniform energy estimate):} Similarly to \eqref{est_energy_app0}, we find
\[
\frac{d}{dt}\mf(f) = -2\int_{\R^d \times \R^d} |v|^2 f\,dxdv + \int_{\R^d \times \R^d} f v \cdot \chi_\zeta(u_\delta)\,dxdv.
\]
We then use the following facts
\[
|\chi_\zeta(u_\delta)| \leq |u_\delta| \leq |u| \quad \mbox{and} \quad \rho|u|^2 \leq \int_{\R^d} |v|^2f\,dv
\]
to get
$$\begin{aligned}
\lt|\int_{\R^d \times \R^d} f v \cdot \chi_\zeta(u_\delta)\,dxdv\rt| &\leq \lt(\int_{\R^d \times \R^d}|v|^2 f\,dxdv \rt)^{1/2}\lt(\int_{\R^d}|\chi_\zeta(u_\delta)|^2 \rho\,dx \rt)^{1/2}\cr
&\leq \int_{\R^d \times \R^d} |v|^2 f\,dxdv.
\end{aligned}$$
Hence we have
\[
\frac{d}{dt}\mf(f) \leq -\int_{\R^d \times \R^d} |v|^2 f\,dxdv.
\]
This completes the proof.
\end{proof}

\subsubsection{Proof of Theorem \ref{thm_weak}}
In order to conclude the proof of Theorem \ref{thm_weak}, we need to pass to the limits $\zeta \to +\infty$ and $\delta \to 0$. Note that we obtain the uniform $L^p$ estimate and the energy estimate in Proposition \ref{prop_reg}, and the uniform-in-$\zeta$ bound estimate of $G$ in $L^\infty(0,T;L^q(\R^d \times \R^d))$ with $q \leq 2$ can be obtained by using the similar argument as before. Those observations together with the argument in \cite[Section 4]{KMT13} conclude the proof of Theorem \ref{thm_weak}.

%
%
%
%
\subsection{Global-in-time existence of weak solutions to the equation \eqref{main_conti}}\label{sec_conti}
In this subsection, we discuss the global-in-time existence and uniqueness of weak solutions to the continuity type equation \eqref{main_conti}. We just refer to \cite{BLR,CR,BCLR,CCH, NPS01,Pou02} for related results. We adapt some of these ideas for our particular purposes. We first introduce a definition of weak solutions to the equation \eqref{main_conti} and state the our main theorem in this part.

\begin{definition}\label{def_strong3} For a given $T \in (0,\infty)$, we say that $\rho$ is a weak solution to the equation \eqref{main_conti} if the following conditions are satisfied:
\begin{itemize}
\item[(i)] $\rho \in \mc([0,T];\mathcal{P}_2(\R^d))$, 
\item[(ii)] $\rho$ satisfies the system \eqref{main_conti} in the sense of distributions.
\end{itemize}
\end{definition}

\begin{theorem}\label{thm_conti} Let $T>0$. Suppose that the confinement potential $V$ is given by $V = |x|^2/2$ and the interaction potential $W$ is symmetric and $\nabla_x W \in \W^{1,\infty}(\R^d)$. If $\rho_0 \in \mathcal{P}_2(\R^d)$, then there exists a unique global solution $\rho$ to the equation \eqref{main_conti} on the time interval $[0,T]$ in the sense of Definition \ref{def_strong3}. In particular, we have $\sqrt{\rho}(\pa_t u + u \cdot \nabla_x u)\in L^2(0,T;L^2(\R^d))$.
\end{theorem}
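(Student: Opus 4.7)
My plan is to construct $\rho$ via the characteristic flow of the velocity field $u[\rho](x,t):=-\kappa(x+(\nabla_xW\star\rho)(x,t))$, which under the assumption $\nabla_xW\in\W^{1,\infty}$ is globally Lipschitz in $x$ with constant $L:=\kappa(1+\|\nabla_x^2W\|_{L^\infty})$ uniform in $t$, as soon as $\rho(t)$ is a probability measure. Existence will follow from a Banach fixed-point argument on $(\mc([0,T^\ast];\mathcal{P}_2(\R^d)),d^\ast)$, with $d^\ast(\mu,\nu):=\sup_{t\in[0,T^\ast]}W_2(\mu(t),\nu(t))$, for $T^\ast$ small, followed by iteration up to the arbitrary given $T>0$.

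\textbf{Step 1 (Flow and push-forward).} Fix $\rho\in\mc([0,T^\ast];\mathcal{P}_2(\R^d))$. Since $u[\rho]$ is Lipschitz in $x$ uniformly in $t$, the ODE $\pa_tX=u[\rho](X,t)$, $X(0,x)=x$, has a unique global flow. Set $\Phi(\rho)(t):=X(t,\cdot)\#\rho_0$. The estimate $\frac{d}{dt}|X|^2\le-\kappa|X|^2+2\kappa\|\nabla_xW\|_{L^\infty}|X|$ propagates $\sup_{t\le T^\ast}\int|x|^2\Phi(\rho)(t)\,dx<\infty$ purely in terms of $\int|x|^2\rho_0\,dx$ and $\|\nabla_xW\|_{L^\infty}$, so $\Phi$ maps $\mc([0,T^\ast];\mathcal{P}_2)$ into itself. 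Standard push-forward computations confirm $\Phi(\rho)$ is a distributional solution of the continuity equation with velocity $u[\rho]$.

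\textbf{Step 2 (Contraction and existence).} For $\rho_1,\rho_2\in\mc([0,T^\ast];\mathcal{P}_2)$, Kantorovich--Rubinstein duality and $\nabla_x^2W\in L^\infty$ yield
\[
|u[\rho_1](x,t)-u[\rho_2](x,t)|\le\kappa\|\nabla_x^2W\|_{L^\infty}W_1(\rho_1(t),\rho_2(t))\le\kappa\|\nabla_x^2W\|_{L^\infty}W_2(\rho_1(t),\rho_2(t)).
\]
Comparing the two flows through Gr\"onwall gives $|X^{\rho_1}(t,x)-X^{\rho_2}(t,x)|\le Ce^{LT^\ast}\int_0^tW_2(\rho_1(s),\rho_2(s))\,ds$ pointwise in $x$, whence
\[
W_2^2(\Phi(\rho_1)(t),\Phi(\rho_2)(t))\le\int_{\R^d}|X^{\rho_1}(t,x)-X^{\rho_2}(t,x)|^2\rho_0(dx)\le CT^\ast\int_0^tW_2^2(\rho_1(s),\rho_2(s))\,ds,
\]
so $\Phi$ is a strict contraction on $[0,T^\ast]$ for $T^\ast$ small depending only on $\kappa$ and $\|\nabla_x^2W\|_{L^\infty}$. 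The Banach fixed-point theorem produces a unique $\rho\in\mc([0,T^\ast];\mathcal{P}_2)$ solving \eqref{main_conti}; since $T^\ast$ is independent of $\rho_0$, iterating on $[T^\ast,2T^\ast],\dots$ extends the solution to $[0,T]$. Uniqueness over $[0,T]$ follows by the same argument applied to two weak solutions with the same initial datum.

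\textbf{Step 3 (Regularity $\sqrt{\rho}(\pa_tu+u\cdot\nabla_xu)\in L^2$).} Using the continuity equation and integration by parts in the convolution one gets
\[
\pa_tu=\kappa\,(\nabla_x^2W)\star(\rho u),\qquad u\cdot\nabla_xu=-\kappa u-\kappa\bigl((\nabla_x^2W)\star\rho\bigr)u,
\]
hence $\pa_tu+u\cdot\nabla_xu=\kappa(\nabla_x^2W)\star(\rho u)-\kappa u-\kappa\bigl((\nabla_x^2W)\star\rho\bigr)u$. From $u=-\kappa(x+\nabla_xW\star\rho)$ and the propagated second-moment bound, $\int\rho|u|^2\,dx\in L^\infty(0,T)$; combined with $\|(\nabla_x^2W)\star(\rho u)\|_{L^\infty}\le\|\nabla_x^2W\|_{L^\infty}(\int\rho|u|^2\,dx)^{1/2}$ and $\|(\nabla_x^2W)\star\rho\|_{L^\infty}\le\|\nabla_x^2W\|_{L^\infty}$, each of the three summands has its $L^2(\rho\,dx)$ norm uniformly bounded on $[0,T]$, which proves the last claim.

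The main obstacle is ensuring in Step 2 that the contraction constant depends only on quantities preserved under the iteration, namely on $T^\ast$, $\kappa$ and $\|\nabla_xW\|_{\W^{1,\infty}}$ (and not on the second moment, which grows under iteration), so that the same length $T^\ast$ can be used at every step and any arbitrary horizon $T>0$ reached in finitely many steps; the Lipschitz constant of $u[\rho]$ being intrinsic to the kernel $W$ is precisely what delivers this.
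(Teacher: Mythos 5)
Your proposal is correct and follows the same Lagrangian strategy as the paper: both construct $\rho$ as the push-forward of $\rho_0$ along the flow of the velocity field $-\kappa(\nabla_x V+\nabla_x W\star\rho)$, which is globally Lipschitz in $x$ uniformly in $t$ because $\nabla_x W\in\W^{1,\infty}$ and $\nabla_x V=x$. The difference is mainly one of self-containedness: the paper asserts the flow is globally well defined and delegates existence/uniqueness to the cited literature (\cite{CCH,CR,BCLR}), whereas you spell out the Banach fixed-point argument for $\sup_t W_2$ via the Kantorovich--Rubinstein bound $|u[\rho_1]-u[\rho_2]|\le \kappa\|\nabla_x^2W\|_{L^\infty}W_1(\rho_1,\rho_2)$, which is exactly the scheme of those references; your observation that the contraction constant depends only on $\kappa$ and $\|\nabla_x^2W\|_{L^\infty}$, and not on moments of $\rho_0$, is the right point for iterating up to an arbitrary $T$. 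For the final integrability claim the two arguments genuinely diverge: the paper invokes the energy-dissipation inequality $\int\rho|u|^2\,dx\le\int\rho_0|u_0|^2\,dx$ from \cite{CMV03,CCH,BCLR}, while you bound $\int\rho|u|^2\,dx$ directly by the propagated second moment of $\rho$ plus $\|\nabla_x W\|_{L^\infty}^2$; both yield the needed $L^\infty(0,T)$ control of $\int\rho(|\pa_t u|^2+|u|^2|\nabla_x u|^2)\,dx$, and your route avoids the dissipation estimate altogether. The one step I would tighten is uniqueness: to run your contraction argument on an arbitrary weak solution in the sense of Definition \ref{def_strong3} you must first identify it with the push-forward of $\rho_0$ under the flow of its own velocity field, i.e.\ invoke uniqueness for the linear continuity equation with a Lipschitz, time-continuous vector field (see \cite{AGS08}); with that one line added, the argument closes.
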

\begin{proof}
We first introduce the flow $\Psi: \R_+ \times \R_+ \times \R^d \to \R^d$, generated by the velocity field $u = -\nabla_x V - \nabla_x W \star \rho$:
\[
\frac{d}{dt}\Psi(t;s,x) = u(\Psi(t;s,x),t), \quad \Psi(s;s,x) = x
\]
for all $s,t \in [0,T]$. Note that the above flow is well-defined globally in time due to the regularity of $\nabla_x W \in \W^{1,\infty}$ and $\nabla_x V = x$. Concerning the integrability $\sqrt{\rho}(\pa_t u + u \cdot \nabla_x u)\in L^2(0,T;L^2(\R^d))$, we first find
\[
\|\pa_t u\|_{L^\infty} \leq \|\nabla_x W\|_{\W^{1,\infty}}\|\sqrt{\rho}u\|_{L^2} \quad \mbox{and} \quad \|\nabla_x u\|_{L^\infty} \leq C + \|\nabla_x W\|_{\W^{1,\infty}}.
\]
This yields
$$\begin{aligned}
\int_{\R^d} \rho\lt(|\pa_t u|^2 + |u|^2|\nabla_x u|^2\rt)dx &\leq \int_{\R^d} \rho |\pa_t u|^2\, dx +\|\nabla_x u\|_{L^\infty}^2\int_{\R^d} \rho |u|^2\,dx\cr
&\leq \lt(C + \|\nabla_x W\|_{\W^{1,\infty}}^2\rt)\int_{\R^d}\rho |u|^2\,dx.
\end{aligned}$$
On the other hand, it follows from \cite{CMV03, CCH, BCLR} that
\[
\int_{\R^d}\rho |u|^2\,dx \leq \int_{\R^d}\rho_0 |u_0|^2\,dx.
\]
Hence we have
\[
\int_{\R^d} \rho\lt(|\pa_t u|^2 + |u|^2|\nabla_x u|^2\rt)dx \leq C\int_{\R^d}\rho_0 |u_0|^2\,dx \leq C\lt(\int_{\R^d} \rho_0|x|^2\,dx + 1\rt).
\]
This completes the proof.
\end{proof}
%
%
%
%
\subsection{Global-in-time existence of strong solutions to the system \eqref{main_eq2}}\label{sec_strong}
In this part, we study the global-in-time existence of strong solutions to the following system:
\begin{align}\label{main_eq3}
\begin{aligned}
&\pa_t \rho + \nabla_x \cdot (\rho u) = 0, \quad (x,t) \in \R^d \times \R_+,\cr
&\pa_t (\rho u) + \nabla_x \cdot (\rho u \otimes u) = -\gamma \rho u - \lambda \rho(\nabla_x V + \nabla_x W \star \rho)
\end{aligned}
\end{align}
with the initial data
\[
(\rho(x,t),u(x,t))|_{t=0} =: (\rho_0(x), u_0(x)), \quad x \in \R^d.
\]

We now introduce a notion of strong solution to the system \eqref{main_eq3}.
\begin{definition}\label{def_strong2} Let $s > d/2+1$. For given $T\in(0,\infty)$, the pair $(\rho,u)$ is a strong solution of \eqref{main_eq3} on the time interval $[0,T]$ if and only if the following conditions are satisfied:
\begin{itemize}
\item[(i)] $\rho \in \mc([0,T];H^s(\R^d))$, $u \in \mc([0,T];Lip(\R^d)\cap L^2_{loc}(\R^d))$, and $\nabla_x^2 u \in \mc([0,T];H^{s-1}(\R^d))$,
\item[(ii)] $(\rho, u)$ satisfy the system \eqref{main_eq3} in the sense of distributions.
\end{itemize}
\end{definition}

We first present the local-in-time existence and uniqueness results for the systems \eqref{main_eq3}.

\begin{theorem}\label{thm_local}Let $s > d/2+1$ and $R>0$. Suppose that the confinement potential $V$ is given by $V = |x|^2/2$ and the interaction potential $W$ is symmetric and $\nabla_x W \in (\W^{1,1} \cap \W^{1,\infty})(\R^d)$. For any $N<M$, there is a positive constant $T^*$ depending only on $R$, $N$, and $M$ such that if 
\[
\|\rho_0\|_{H^s} + \|u_0\|_{L^2(B(0,R))}+\|\nabla_x u_0\|_{L^\infty} +  \|\nabla_x^2 u_0\|_{H^{s-1}} < N,
\]
then the Cauchy problem \eqref{main_eq3} has a unique strong solution $(\rho,u)$, in the sense of Definition \ref{def_strong2}, satisfying
\[
\sup_{0 \leq t \leq T^*}\lt(\|\rho(\cdot,t)\|_{H^s} + \|u(\cdot,t)\|_{L^2(B(0,R))} +\|\nabla_x u(\cdot,t)\|_{L^\infty} + \|\nabla_x^2 u(\cdot,t)\|_{H^{s-1}}\rt) \leq M,
\]
where $B(0,R)$ denotes a ball of radius $R$ centered at the origin.
\end{theorem}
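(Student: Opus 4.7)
My plan is to construct the unique local strong solution via a linear iteration scheme combined with Sobolev energy estimates adapted to the degenerate pressureless structure. I first rewrite the momentum equation in the nonconservative form
\[
\pa_t u + (u\cdot \nabla_x)u = -\gamma u - \lambda\bigl(\nabla_x V + \nabla_x W \star \rho\bigr),
\]
which, together with the continuity equation, is equivalent to the conservative form on the support of $\rho$. I then define the iterates $(\rho^{n+1}, u^{n+1})$ starting from $(\rho_0,u_0)$ by solving the linear continuity equation $\pa_t \rho^{n+1}+\nabla_x\cdot(\rho^{n+1}u^n)=0$ and the linear transport equation $\pa_t u^{n+1}+(u^n\cdot\nabla_x)u^{n+1}=-\gamma u^{n+1}-\lambda(\nabla_x V+\nabla_x W\star \rho^n)$. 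Existence of each iterate is standard by the method of characteristics since the coefficients are regular by the inductive hypothesis.

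The heart of the argument is propagating the functional
\[
\mathcal{N}(\rho,u):=\|\rho\|_{H^s}+\|u\|_{L^2(B(0,R))}+\|\nabla_x u\|_{L^\infty}+\|\nabla_x^2 u\|_{H^{s-1}}
\]
uniformly on a common interval $[0,T^*]$. For $\|\rho^{n+1}\|_{H^s}$, I use Kato--Ponce commutator estimates applied to the linear continuity equation, closing via Gr\"onwall using $\|\nabla u^n\|_{L^\infty}+\|\nabla^2 u^n\|_{H^{s-1}}$. For $\|\nabla u^{n+1}\|_{L^\infty}$, I differentiate the velocity equation once and integrate along the characteristics of $u^n$ to obtain the Riccati-type estimate
\[
\tfrac{d}{dt}\|\nabla u^{n+1}\|_{L^\infty}\leq \|\nabla u^n\|_{L^\infty}\|\nabla u^{n+1}\|_{L^\infty}+\gamma\|\nabla u^{n+1}\|_{L^\infty}+\lambda\bigl(c_V+\|\nabla^2 W\|_{L^\infty}\|\rho^n\|_{L^1}\bigr).
\]
For $\|\nabla^2 u^{n+1}\|_{H^{s-1}}$, I perform an $L^2$ energy estimate on $\nabla^k u^{n+1}$ for $2\leq k\leq s+1$: the forcing $\nabla^{k+1}V$ vanishes for $k\geq 2$, and the nonlocal term is handled by writing $\nabla^{k+1}(\nabla_x W\star \rho^n)=\nabla^2 W\star \nabla^{k-1}\rho^n$ and invoking Young's inequality with $\nabla W\in \W^{1,1}\cap \W^{1,\infty}$. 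Finally, for $\|u^{n+1}\|_{L^2(B(0,R))}$, I test the velocity equation against a smooth cutoff supported slightly outside $B(0,R)$, using the already-controlled Lipschitz norm of $u^n$ together with the local boundedness of $\nabla V = x$ and $\nabla W\star \rho^n$. Choosing $T^*$ small enough depending only on $R,N,M$ closes all four estimates within the target ball of radius $M$.

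To pass to the limit, the differences $(\rho^{n+1}-\rho^n, u^{n+1}-u^n)$ satisfy linearized equations; a standard $L^2\times L^2(B(0,R))$ energy estimate—which loses one derivative in the source terms but is compensated by the uniform $\mathcal{N}$-bound—shows the iteration is Cauchy in this weaker topology. Interpolation with the uniform $\mathcal{N}$-bound yields convergence in any intermediate norm and hence a strong solution; the same difference estimate gives uniqueness.

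The main obstacle is that $u$ grows linearly at infinity since $\nabla_x V=x$ forces such growth, so $u\notin L^2(\R^d)$ globally, and the transport nonlinearity $u\cdot\nabla_x u$ cannot be treated by a naive global $L^2$ energy method. My resolution is to track $u$ itself only in $L^2_{loc}$ via a cutoff argument, while working at the level of $\nabla u$ and higher derivatives where the linear-growth component of $u$ has been killed and clean Sobolev estimates apply. A secondary delicacy is maintaining uniform regularity of the characteristic flow of $u^n$ across the iteration, which is precisely what the bootstrap on $\|\nabla u^n\|_{L^\infty}$ guarantees on the common time interval $[0,T^*]$.
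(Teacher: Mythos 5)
Your overall strategy coincides with the paper's: a linear iteration $(\rho^{n+1},u^{n+1})$ driven by $u^n$, uniform propagation of the functional $\|\rho\|_{H^s}+\|u\|_{L^2(B(0,R))}+\|\nabla_x u\|_{L^\infty}+\|\nabla_x^2 u\|_{H^{s-1}}$ on a short interval via commutator/Moser estimates, a Riccati-type bound along characteristics for $\|\nabla_x u\|_{L^\infty}$, Young's inequality with $\nabla_x W\in \W^{1,1}$ for the nonlocal forcing, and a Cauchy estimate in a weak topology upgraded by interpolation. The one place where your argument as written does not close is the contraction step. You propose to run it in $L^2\times L^2(B(0,R))$ and to absorb the resulting derivative loss "by the uniform $\mathcal{N}$-bound," but that cannot work: the difference equation for $\rho^{n+1}-\rho^n$ contains the source $\rho^n\,\nabla_x\cdot(u^n-u^{n-1})$, so the $L^2$ estimate for the density difference genuinely requires $\|\nabla_x(u^n-u^{n-1})\|_{L^2}$; replacing that by the uniform bound $2M$ yields only boundedness of the increments, not a Cauchy sequence, and interpolating a global $H^1$ difference norm against a merely local $L^2(B(0,R))$ difference norm is not available. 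The paper avoids this by measuring the velocity differences in $H^1(\R^d)$: the velocity-difference equation's only nonlocal source is $\nabla_x W\star(\rho^{n+1}-\rho^n)$, which is controlled in $H^1$ by $\|\nabla_x W\|_{\W^{1,1}}\|\rho^{n+1}-\rho^n\|_{L^2}$ alone, so the $L^2(\rho)\times H^1(u)$ system closes with no derivative loss. Switching your weak topology to this one repairs the step; the rest of your outline (including the cutoff treatment of the linearly growing $u$ and the sign-irrelevant damping term in the Riccati bound) matches the paper's proof.
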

\begin{proof} Since the proof of local-in-time existence theory is by now classical, we sketch the proof here, see \cite[Section 2.1]{CK16} for detailed discussions.  For simplicity, we set $\lambda = \gamma =1$.

\noindent {\bf Step 1 (Linearized system):} We first consider the associate linear system:
\begin{align}\label{lin_sys}
\begin{aligned}
&\pa_t \rho + \tilde u \cdot \nabla_x \rho + \rho \nabla_x \cdot \tilde u = 0,\cr
&\rho\pa_t u + \rho \tilde u \cdot \nabla_x u = - \rho u - \rho(\nabla_x V + \nabla_x W \star \rho)
\end{aligned}
\end{align}
with the initial data $(\rho_0,u_0)$ satisfying the assumptions in Theorem \ref{thm_local}. Here $\tilde u$ satisfies
\bq\label{lin_reg}
\sup_{0 \leq t \leq T}\lt(\|\tilde u(\cdot,t)\|_{L^2(B(0,R))} + \|\nabla_x \tilde u(\cdot,t)\|_{L^\infty} + \|\nabla_x^2 \tilde u(\cdot,t)\|_{H^{s-1}}\rt) \leq M.
\eq
We notice that the existence of the above linear system can be proved by a standard linear theory \cite{K73}. Since $\tilde u$ is globally Lipschitz, by using the method of characteristics, we can show the positivity of the density $\rho$. By a straightforward computation, we first find from the continuity equation in \eqref{lin_sys} that
\begin{align}\label{est_rho}
\begin{aligned}
\frac{d}{dt}\int_{\R^d} \rho^2\,dx &\leq C\|\nabla_x \tilde u\|_{L^\infty}\|\rho\|_{L^2}^2,\cr
\frac{d}{dt}\int_{\R^d} |\nabla_x \rho|^2\,dx &\leq C\|\nabla_x \tilde u\|_{L^\infty}\|\nabla_x \rho\|_{L^2}^2 + C\|\nabla_x^2 \tilde u\|_{L^2}\|\rho\|_{L^\infty}\|\nabla_x \rho\|_{L^2}.
\end{aligned}
\end{align}
For $2 \leq k \leq s$, we obtain
$$\begin{aligned}
&\frac12\frac{d}{dt}\int_{\R^d} |\nabla_x^k \rho|^2\,dx \cr
&\quad = - \int_{\R^d} \nabla_x^k \rho \cdot (\tilde u \cdot \nabla_x^{k+1} \rho)\,dx - \int_{\R^d} \nabla_x^k \rho \cdot (\nabla_x^k (\nabla_x \rho \cdot \tilde u) - \tilde u \cdot \nabla_x^{k+1} \rho)\,dx\cr
&\qquad - \int_{\R^d} \nabla_x^k \rho \cdot (\nabla_x^k (\nabla_x \cdot \tilde u)) \rho\,dx - \int_{\R^d} \nabla_x^k \rho \cdot (\nabla_x^k(\rho \nabla_x \cdot \tilde u) -  \rho\nabla_x^k (\nabla_x \cdot \tilde u))\,dx\cr
&\quad =: \sum_{i=1}^4 I_i,
\end{aligned}$$
where $\nabla_x^k$ denotes any partial derivative $\partial_x^\alpha$ with multi-index $\alpha$, $|\alpha| = k$, and we estimate
$$\begin{aligned}
I_1 &\leq \|\nabla_x \tilde u\|_{L^\infty}\|\nabla_x^k \rho\|_{L^2}^2,\cr
I_2 &\leq \|\nabla_x^k (\nabla_x \rho \cdot \tilde u) - \tilde u \cdot \nabla_x^{k+1} \rho\|_{L^2}\|\nabla_x^k\rho\|_{L^2}\cr
&\leq C\lt(\|\nabla_x^k \tilde u\|_{L^2}\|\nabla_x \rho\|_{L^\infty} +  \|\nabla_x \tilde u\|_{L^\infty}\|\nabla_x^k\rho\|_{L^2}\rt)\|\nabla_x^k\rho\|_{L^2},\cr
I_3 &\leq \|\rho\|_{L^\infty}\|\nabla_x^k\rho\|_{L^2}\|\nabla_x^{k+1}\tilde u\|_{L^2},\cr
I_4 &\leq \|\nabla_x^k(\rho \nabla_x \cdot \tilde u) -  \rho\nabla_x^k (\nabla_x \cdot \tilde u)\|_{L^2}\|\nabla_x^k\rho\|_{L^2}\cr
&\leq C\lt(\|\nabla_x^k \rho\|_{L^2}\|\nabla_x \tilde u\|_{L^\infty} + \|\nabla_x \rho\|_{L^\infty}\|\nabla_x^k \tilde u\|_{L^2} \rt)\|\nabla_x^k\rho\|_{L^2}.
\end{aligned}$$
Here, in order to bound $I_2$ and $I_4$, we used Moser-type inequality \cite[Lemma 2.1]{C16} as
\[
\|\nabla_x^k (fg) - f\nabla_x^kg\|_{L^2} \leq C\lt(\|\nabla_x f\|_{L^\infty}\|\nabla_x^{k-1}g\|_{L^2} + \|\nabla_x^k f\|_{L^2}\|g\|_{L^\infty} \rt)
\]
for $f,g \in (H^k \cap L^\infty)(\R^d)$ and $\nabla_x f \in L^\infty(\R^d)$. This, together with \eqref{lin_reg}, yields
\bq\label{est_lrho}
\frac{d}{dt}\|\rho\|_{H^s}^2 \leq CM\|\rho\|_{H^s}^2, \quad \mbox{i.e.,} \quad \sup_{0 \leq t \leq T} \|\rho(\cdot,t)\|_{H^s} \leq \|\rho_0\|_{H^s}e^{CMT},
\eq
due to $s > d/2+1$. For the estimate of $u$, we use the positivity of $\rho$ to divide the momentum equation in \eqref{lin_sys} by $\rho$ and use the similar argument as in Lemma \ref{lem_u} to get
\[
\|\nabla_x u\|_{L^\infty}e^t \leq \|\nabla_x u_0\|_{L^\infty} + CM\int_0^t \|\nabla_x u\|_{L^\infty}e^s\,ds + C(e^t - 1).
\]
Applying Gronwall's inequality to the above, we obtain
\bq\label{est_supu}
\sup_{0 \leq t \leq T}\|\nabla_x u(\cdot,t)\|_{L^\infty} \leq \|\nabla_x u_0\|_{L^\infty} e^{CMT} + C(e^{CMT} - 1).
\eq
For $2 \leq k \leq s+1$, similarly as above, we next estimate 
$$\begin{aligned}
&\frac12\frac{d}{dt}\int_{\R^d} |\nabla_x^k u|^2\,dx \cr
&\quad = - \int_{\R^d} \nabla_x^k u \cdot (\tilde u \cdot \nabla_x^{k+1} u)\,dx - \int_{\R^d} \nabla_x^k u \cdot ( \nabla_x^k(\tilde u \cdot \nabla_x u) - \tilde u \cdot \nabla_x^{k+1} u)\,dx\cr
&\qquad - \int_{\R^d} |\nabla_x^k u|^2\,dx - \int_{\R^d} \nabla_x^k u \cdot (\nabla_x^2 W \star \nabla_x^{k-1}\rho)\,dx\cr
&\leq \|\nabla_x \tilde u\|_{L^\infty}\|\nabla_x^k u\|_{L^2}^2 + C\lt(\|\nabla_x^k \tilde u\|_{L^2}\|\nabla_x u\|_{L^\infty} + \|\nabla_x \tilde u\|_{L^\infty} \|\nabla_x^k u\|_{L^2} \rt)\|\nabla_x^k u\|_{L^2}\cr
&\quad - \|\nabla_x^k u\|_{L^2}^2 + \|\nabla_x^k u\|_{L^2}\|\nabla_x^2 W\|_{L^1}\|\nabla_x^{k-1}\rho\|_{L^2}\cr
&\leq CM\|\nabla_x^k u\|_{L^2}^2 + CM\|\nabla_x u\|_{L^\infty}\|\nabla_x^k u\|_{L^2} + C\|\nabla_x^k u\|_{L^2}\|\nabla_x^{k-1}\rho\|_{L^2}.
\end{aligned}$$
Summing the above inequality over $2 \leq k \leq s+1$ gives
\[
\frac{d}{dt}\|\nabla_x^2 u\|_{H^{s-1}} \leq CM\|\nabla_x^2 u\|_{H^{s-1}}  + C\|\nabla_x \rho\|_{H^{s-1}} + CM\|\nabla_x u\|_{L^\infty}.
\]
Then we combine the above, \eqref{est_lrho}, and \eqref{est_supu} to have
\[
\frac{d}{dt}\|\nabla_x^2 u\|_{H^{s-1}} \leq CM\|\nabla_x^2 u\|_{H^{s-1}} + C\|\rho_0\|_{H^s}e^{CMT} + C\|\nabla_x u_0\|_{L^\infty} e^{CMT} + C(e^{CMT} - 1).
\]
Thus we obtain
\[
\|\nabla_x^2 u\|_{H^{s-1}} \leq \|\nabla_x^2 u_0\|_{H^{s-1}} e^{CMT}  + C(\|\rho_0\|_{H^s} + \|\nabla_x u_0\|_{L^\infty} + 1)Te^{CMT}.
\]
On the other hand, we get
$$\begin{aligned}
\frac12\frac{d}{dt}\int_{B(0,R)} |u|^2\,dx &= -\int_{B(0,R)} u \cdot ((\tilde u \cdot \nabla_x)u)\,dx - \int_{B(0,R)} |u|^2\,dx\cr
&\quad - \int_{B(0,R)} u \cdot \nabla_x V\,dx - \int_{B(0,R)} u \cdot (\nabla_x W \star \rho)\,dx\cr
&\leq \|\nabla_x u\|_{L^\infty}\|\tilde u\|_{L^2(B(0,R))}\|u\|_{L^2(B(0,R))} + C\|u\|_{L^2(B(0,R))} - \|u\|_{L^2(B(0,R))}^2\cr
&\leq C\|u\|_{L^2(B(0,R))} - \|u\|_{L^2(B(0,R))}^2,
\end{aligned}$$
due to \eqref{est_supu}. By using Gronwall's inequality, we find
\[
\frac{d}{dt}\|u\|_{L^2(B(0,R))}  \leq C - \|u\|_{L^2(B(0,R))}, \quad \mbox{i.e.,} \quad \|u\|_{L^2(B(0,R))} \leq \|u_0\|_{L^2(B(0,R))} + C(e^T - 1).
\]
Combining all of the above observations yields
$$\begin{aligned}
&\|\rho\|_{H^s} + \|u\|_{L^2(B(0,R))}+ \|\nabla_x u\|_{L^\infty} + \|\nabla_x^2 u\|_{H^{s-1}} \cr
&\quad \leq (\|\rho_0\|_{H^s} + \|u_0\|_{L^2(B(0,R))} +\|\nabla_x^2 u_0\|_{H^{s-1}}) e^{CMT}\cr
&\qquad  + C(\|\rho_0\|_{H^s} + \|\nabla_x u_0\|_{L^\infty} + 1)Te^{CMT} + C(e^{CMT} - 1)\cr
&\quad \leq (N + (N + 1)T)e^{CMT} + C(e^{CMT} - 1).
\end{aligned}$$
We finally choose $T^* >0$ small enough such that the right hand side of the above inequality is less than $M$. Hence we have
\[
\sup_{0 \leq t \leq T^*}\lt(\|\rho(\cdot,t)\|_{H^s} + \|u(\cdot,t)\|_{L^2(B(0,R))}+ \|\nabla_x u(\cdot,t)\|_{L^\infty} + \|\nabla_x^2 u(\cdot,t)\|_{H^{s-1}}\rt) \leq M.
\]
Notice that $T^*$, $N$, and $M$ do not depend on $\tilde u$. \newline

\noindent {\bf Step 2 (Existence):} We now construct the approximated solutions $(\rho^n,u^n)$ for the system \eqref{main_eq3} by solving the following linear system:
$$\begin{aligned}
&\pa_t \rho^{n+1} + u^n \cdot \nabla_x \rho^{n+1} + \rho^{n+1} \nabla_x \cdot u^n = 0,\cr
&\rho^{n+1}\pa_t u^{n+1} + \rho^{n+1} u^n \cdot \nabla_x u^{n+1} = - \rho^{n+1}u^{n+1} - \rho^{n+1}(\nabla_x V + \nabla_x W \star \rho^{n+1}),
\end{aligned}$$
with the initial data and first iteration step defined by
\[
(\rho^n(x,0),u^n(x,0))=(\rho_0(x),u_0(x)) \quad \mbox{for all} \quad n \geq 1, \quad x\in \R^d,
\]
and
\[
(\rho^0(x,t),u^0(x,t)) = (\rho_0(x),u_0(x)), \quad (x,t) \in \R^d \times \R_+.
\]
Then it follows from {\bf Step 1} that for any $N < M$, there exists $T^* > 0$ such that if $\|\rho_0\|_{H^s} + \|u_0\|_{L^2(B(0,R))} + \|\nabla_x u_0\|_{L^\infty} +  \|\nabla_x^2 u_0\|_{H^{s-1}} < N$, then we have
\[
\sup_{n \geq 0} \sup_{0 \leq t \leq T^*}\lt(\|\rho^n(\cdot,t)\|_{H^s} + \|u^n(\cdot,t)\|_{L^2(B(0,R))} + \|\nabla_x u^n(\cdot,t)\|_{L^\infty} + \|\nabla_x^2 u^n(\cdot,t)\|_{H^{s-1}}\rt) \leq M.
\]
Note that $\rho^{n+1} - \rho^n$ and $u^{n+1} - u^n$ satisfy
$$\begin{aligned}
&\pa_t (\rho^{n+1} - \rho^n) + (u^n - u^{n-1})\cdot \nabla_x \rho^{n+1} + u^{n-1} \cdot \nabla_x (\rho^{n+1} - \rho^n) \cr
&\qquad + (\rho^{n+1} - \rho^n) \nabla_x \cdot u^n + \rho^n \nabla_x \cdot (u^n - u^{n-1}) = 0
\end{aligned}$$
and
$$\begin{aligned}
&\pa_t (u^{n+1} - u^n) + (u^n - u^{n-1})\cdot \nabla_x u^{n+1} + u^{n-1} \cdot \nabla_x (u^{n+1} - u^n) \cr
&\qquad = - (u^{n+1} - u^n) - \nabla_x W \star (\rho^{n+1} - \rho^n),
\end{aligned}$$
respectively. Then a straightforward computation gives
\[
\|(\rho^{n+1} - \rho^n)(\cdot,t)\|_{L^2}^2 \leq C\int_0^t \lt(\|(\rho^{n+1} - \rho^n)(\cdot,s)\|_{L^2}^2 + \|(u^n - u^{n-1})(\cdot,s)\|_{H^1}^2\rt)ds,
\]
where $C > 0$ depends on $\|\nabla_x \rho^{n+1}\|_{L^\infty}$, $\|\nabla_x u^n\|_{L^\infty}$, $\|\nabla_x u^{n+1}\|_{L^\infty}$, and $\|\rho^n\|_{L^\infty}$.
We also find
\[
\|(u^{n+1} - u^n)(\cdot,t)\|_{H^1}^2 \leq C\int_0^t \lt(\|(\rho^{n+1} - \rho^n)(\cdot,s)\|_{L^2}^2 + \|(u^n - u^{n-1})(\cdot,s)\|_{H^1}^2\rt)ds,
\]
where $C > 0$ depends on $\|\nabla_x u^{n+1}\|_{\W^{1,\infty}}$, $\|\nabla_x u^{n-1}\|_{\W^{1,\infty}}$, and $\|\nabla_x W\|_{\W^{1,1}}$. This provides that $(\rho^n,u^n)$ is a Cauchy sequence in $\mc([0,T];L^2(\R^d)) \times \mc([0,T];H^1(\R^d))$. Interpolating this strong convergences with the above uniform-in-$n$ bound estimates gives
\[
\rho^n \to \rho \quad \mbox{in }\mc([0,T_*]; H^{s-1}(\R^d)), \quad u^n \to u \quad \mbox{in }\mc([0,T_*]; H^1(B(0,R))) \quad \mbox{as } n\to\infty,
\]
\[
\nabla_x u^n \to \nabla_x u \quad \mbox{in } \mc(\R^d \times [0,T_*]), \quad \mbox{and} \quad \nabla_x^2 u^n \to \nabla_x^2 u \quad \mbox{in } \mc([0,T_*];H^{s-2}(\R^d)) \quad \mbox{as } n\to\infty,
\]
due to $s > d/2+1$. In order to show the limiting functions $\rho$ and $u$ satisfy the regularity in Theorem \ref{thm_local} we can use a standard functional analytic arguments. For more details, we refer to \cite[Section 2.1]{CK16} and \cite[Appendix A]{CCZ16}. We also notice that it is easy to show the limiting functions $\rho$ and $u$ are solutions to \eqref{main_eq3} in the sense of Definition \ref{def_strong2}.

\noindent {\bf Step 3 (Uniqueness):} Let $(\rho_1,u_1)$ and $(\rho_2,u_2)$ be the strong solutions obtained in the previous step with the same initial data $(\rho_0,u_0)$. Then it directly follows from the Cauchy estimate in {\bf Step 2} that
\[
\|(\rho_1 - \rho_2)(\cdot,t)\|_{L^2}^2 + \|(u_1 - u_2)(\cdot,t)\|_{H^1}^2 \leq C\int_0^t \lt(\|(\rho_1 - \rho_2)(\cdot,s)\|_{L^2}^2 + \|(u_1 - u_2)(\cdot,s)\|_{H^1}^2\rt)ds.
\]
Thus we obtain
\[
\|(\rho_1 - \rho_2)(\cdot,t)\|_{L^2}^2 + \|(u_1 - u_2)(\cdot,t)\|_{H^1}^2 \equiv 0
\]
for all $t \in [0,T^*]$. Hence we have the uniqueness of strong solutions.
\end{proof}

We next show global-in-time existence of strong solutions to the system \eqref{main_eq3} under additional assumptions on parameters $\gamma$, $\lambda$, see below, and the interaction potential $W$. We remark that the assumption on $\gamma$ and $\lambda$ is used in Lemma \ref{lem_u} for the uniform bound estimate of $\nabla_x u$. The strong regularity of $\nabla_x W$ is needed for the global-in-time existence of solutions. Note that we do not require any small assumptions on the initial data.

\begin{theorem}\label{thm_glo} Let $s> d/2 + 1$, $T>0$, and $R>0$. Suppose that the confinement potential $V$ is given by $V = |x|^2/2$ and the interaction potential $W$ is symmetric and $\nabla_x W \in (\W^{1,1} \cap \W^{[d/2]+1,\infty})(\R^d)$. Suppose that initial data $(\rho_0, u_0)$ satisfy 
\[
\rho_0 \in H^s(\R^d), \quad u_0 \in (Lip \cap L^2_{loc})(\R^d), \quad \mbox{and} \quad \nabla_x^2 u_0 \in H^{s-1}(\R^d).
\]
Then there exist $\gamma_* > 0$ and $\kappa_*>0$ such that 
\[
\sup_{0 \leq t \leq T}\lt( \|\rho(\cdot,t)\|_{H^s} + \|u(\cdot,t)\|_{L^2(B(0,R))}+\|\nabla_x u(\cdot,t)\|_{L^\infty} + \|\nabla_x^2 u(\cdot,t)\|_{H^{s-1}} \rt) \leq C
\]
for $\gamma \geq \gamma_*$ and $\kappa \leq \kappa_*$, where $C$ depends on the initial data $(\rho_0, u_0)$, $T$, $\gamma_*$, $\kappa_*$, and $\|\nabla_x W\|_{\W^{[d/2]+1,1}}$. Here $\gamma_* $ and $\kappa_*$ are appeared in Lemma \ref{lem_u}.
\end{theorem}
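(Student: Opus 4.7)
The plan is to extend the local-in-time strong solution guaranteed by Theorem \ref{thm_local} to the full interval $[0,T]$ via the standard continuation principle: it suffices to show that, under the hypotheses $\gamma \geq \gamma_*$ and $\kappa \leq \kappa_*$, the quantity
$$\Phi(t) := \|\rho(\cdot,t)\|_{H^s} + \|u(\cdot,t)\|_{L^2(B(0,R))} + \|\nabla_x u(\cdot,t)\|_{L^\infty} + \|\nabla_x^2 u(\cdot,t)\|_{H^{s-1}}$$
remains bounded on the maximal existence interval intersected with $[0,T]$, and then iterate the local theorem to cover all of $[0,T]$.

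The first and most crucial ingredient is already in place: Lemma \ref{lem_u} provides the uniform-in-time bound $\|\nabla_x u(\cdot,t)\|_{L^\infty} \leq \|\nabla_x u_0\|_{L^\infty} + 1 =: C_0$ as soon as $\gamma \geq \gamma_*$ and $\kappa \leq \kappa_*$. The remaining three norms will be controlled on $[0,T]$ by exploiting the strong damping $-\gamma \rho u$ in the momentum equation. For the top-order derivatives of $u$, I would differentiate the nonconservative momentum equation $\partial_t u + u \cdot \nabla_x u = -\gamma u - \lambda \nabla_x V - \lambda \nabla_x W \star \rho$ up to order $k$ for $2 \leq k \leq s+1$. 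Since $V$ is quadratic, $\nabla_x^{k+1} V = 0$ for $k \geq 2$, so the only forcing term at top order is $\lambda\,\nabla_x^2 W \star \nabla_x^{k-1} \rho$, which by Young's inequality is bounded in $L^2$ by $\lambda \|\nabla_x^2 W\|_{L^1} \|\rho\|_{H^{s}}$. Combined with the Moser commutator inequality for the transport term and the bound $\|\nabla_x u\|_{L^\infty} \leq C_0$, the energy estimate yields
$$\frac{d}{dt}\|\nabla_x^2 u\|_{H^{s-1}}^2 \leq (-2\gamma + C C_0)\|\nabla_x^2 u\|_{H^{s-1}}^2 + C\lambda \|\rho\|_{H^s}\|\nabla_x^2 u\|_{H^{s-1}}.$$
Enlarging $\gamma_*$ if necessary and using $\lambda=\kappa\gamma$ together with Young's inequality, this yields
$$\|\nabla_x^2 u(\cdot,t)\|_{H^{s-1}}^2 \leq \|\nabla_x^2 u_0\|_{H^{s-1}}^2 + C\kappa^2 \sup_{0\le s \le t}\|\rho(\cdot,s)\|_{H^s}^2.$$

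In parallel, standard Moser-type energy estimates on the continuity equation, as in Step 1 of the proof of Theorem \ref{thm_local}, give
$$\frac{d}{dt}\|\rho\|_{H^s}^2 \leq C\bigl(\|\nabla_x u\|_{L^\infty} + \|\nabla_x^2 u\|_{H^{s-1}}\bigr)\|\rho\|_{H^s}^2 \leq C\bigl(1 + \|\nabla_x^2 u\|_{H^{s-1}}\bigr)\|\rho\|_{H^s}^2.$$
Inserting the $u$-bound and running a bootstrap argument (which is effective because the feedback of $\rho$ into $\|\nabla_x^2 u\|_{H^{s-1}}$ carries the small prefactor $\kappa$), I would conclude $\|\rho(\cdot,t)\|_{H^s} \leq C(T)$ on $[0,T]$, and consequently the same bound for $\|\nabla_x^2 u(\cdot,t)\|_{H^{s-1}}$. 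Finally, testing the momentum equation against $u\,\chi_{B(0,R)}$ and using the dissipation $-\gamma u$ together with the already obtained bounds on $\rho$ and $\nabla_x u$ yields a linear ODE of the form $\tfrac{d}{dt}\|u\|_{L^2(B(0,R))} \leq -c\gamma\|u\|_{L^2(B(0,R))} + C_R$, hence $\|u(\cdot,t)\|_{L^2(B(0,R))} \leq C(T,R)$, completing the a priori control of $\Phi$.

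The main obstacle is closing the coupled differential inequalities for $\|\rho\|_{H^s}$ and $\|\nabla_x^2 u\|_{H^{s-1}}$: the continuity equation is \emph{not} damped, so the $\rho$-norms grow exponentially at a rate driven by $\|\nabla_x^2 u\|_{H^{s-1}}$, which is in turn driven by $\|\rho\|_{H^s}$ with a prefactor proportional to $\kappa$ thanks to the damping. Closing the loop relies essentially on the smallness of $\kappa$ and the uniform $L^\infty$ bound on $\nabla_x u$, and produces a constant $C$ depending on $T$, $\gamma_*$, $\kappa_*$, and $\|\nabla_x W\|_{\W^{[d/2]+1,\infty}}$ (and the initial data), as stated. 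The higher regularity assumption $\nabla_x W \in \W^{[d/2]+1,\infty}$ enters in the intermediate Moser estimates, where $L^\infty$ bounds on $\nabla_x^k(\nabla_x W \star \rho)$ for $k \leq [d/2]+1$ are obtained via Young's inequality combined with mass conservation.
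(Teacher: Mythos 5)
Your proposal is essentially correct as an a priori estimate plus continuation argument, but it closes the top-order estimates by a genuinely different mechanism than the paper. The paper never uses smallness of $\kappa$ beyond Lemma \ref{lem_u}: it runs a ladder in which the forcing $\nabla_x^{k+1}(\nabla_x W\star\rho)$ is estimated by putting \emph{all but one} derivative on the kernel, $\|\nabla_x^{k+1}(\nabla_x W\star\rho)\|_{L^2}\leq \|\nabla_x^{k+1}W\|_{L^1}\|\nabla_x\rho\|_{L^2}$, so that $\|\nabla_x^2 u\|_{H^{[d/2]}}$ is controlled by the already-known $H^1$ bound on $\rho$ alone; this is then bootstrapped to $\|\rho\|_{H^{[d/2]+2}}$, which yields $\|\nabla_x\rho\|_{L^\infty}\leq C$ by Sobolev embedding, after which the full range $2\leq k\leq s$ closes using only $\|\nabla_x^2 W\|_{L^1}$. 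This is exactly where the hypothesis $\nabla_x W\in\W^{[d/2]+1,1}$ is consumed, and it decouples the top-order $u$-estimate from the top-order $\rho$-estimate without any smallness. You instead keep the forcing at the form $\nabla_x^2 W\star\nabla_x^{k-1}\rho$ (needing only $\nabla_x W\in\W^{1,1}$) and break the loop by playing the damping $-\gamma$ against the prefactor $\lambda=\kappa\gamma$, which gives $\sup_t\|\nabla_x^2 u\|_{H^{s-1}}\leq \|\nabla_x^2 u_0\|_{H^{s-1}}+C\kappa\sup_t\|\rho\|_{H^s}$ and then a self-consistent inequality of the type $M\leq B\,e^{C\kappa TM}$ for $M=\sup_t\|\rho\|_{H^s}$. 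This does close by a continuity argument, but only under the condition $C\kappa TB\ls 1$, so your $\kappa_*$ necessarily depends on $T$ and on the size of the initial data (and degenerates as $T\to\infty$), whereas the theorem asserts that $\gamma_*,\kappa_*$ are exactly the thresholds of Lemma \ref{lem_u}, which depend only on $\|\nabla_x u_0\|_{L^\infty}$. So you prove the ``there exist $\gamma_*,\kappa_*$'' version (which suffices for the application in Theorem \ref{thm_main}), but not the statement with the thresholds as pinned down in the last sentence; conversely, your route uses strictly less regularity of $W$ than the paper's. Your closing remark that $\nabla_x W\in\W^{[d/2]+1,\infty}$ enters through $L^\infty$ bounds on $\nabla_x^k(\nabla_x W\star\rho)$ is not where that hypothesis is actually used in the paper, and in your own scheme it is not needed at all.
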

\begin{proof}Similarly as \eqref{est_rho}, we estimate
\[
\frac{d}{dt}\|\rho\|_{H^1}^2 \leq C\|\nabla_x u\|_{L^\infty}\|\rho\|_{H^1}^2 + C\|\nabla_x^2 u\|_{L^2}\|\rho\|_{L^\infty}\|\nabla_x \rho\|_{L^2}
\]
and
\[
\frac{d}{dt}\|\nabla_x^2 u\|_{L^2}^2 \leq  C\|\nabla_x u\|_{L^\infty}\|\nabla_x^2 u\|_{L^2}^2 + C\|\nabla_x^2 u\|_{L^2}\|\rho\|_{H^1}.
\]
This yields
\[
\frac{d}{dt}\lt(\|\rho\|_{H^1}^2 + \|\nabla_x^2 u\|_{L^2}^2 \rt) \leq C\lt(\|\nabla_x u\|_{L^\infty} + \|\rho\|_{L^\infty} +1\rt)\lt(\|\rho\|_{H^1}^2 + \|\nabla_x^2 u\|_{L^2}^2 \rt),
\]
i.e.,
$$\begin{aligned}
&\sup_{0 \leq t \leq T} \lt(\|\rho(\cdot,t)\|_{H^1} + \|\nabla_x^2 u(\cdot,t)\|_{L^2}\rt) \cr
&\qquad \leq C\lt(\|\rho_0\|_{H^1} + \|\nabla_x^2 u_0\|_{L^2} \rt) \exp\lt( \int_0^T \|\nabla_x u(\cdot,t)\|_{L^\infty} + \|\rho(\cdot,t)\|_{L^\infty} \,dt \rt).
\end{aligned}$$
On the other hand, it follows from Lemma \ref{lem_u} that there exist $\gamma_* > 0$ and $\kappa_*>0$ such that 
\[
\sup_{0 \leq t \leq T}\|\nabla_x u(\cdot,t)\|_{L^\infty} \leq \|\nabla_x u_0\|_{L^\infty} + 1
\]
for $\gamma \geq \gamma_*$ and $\kappa \leq \kappa_*$. Then this, together with using the method of characteristics, gives
\[
\sup_{0 \leq t \leq T}\|\rho(\cdot,t)\|_{L^\infty} \leq \|\rho_0\|_{L^\infty} \exp\lt(\int_0^T\|\nabla_x u(\cdot,t)\|_{L^\infty}\,dt\rt) \leq C\|\rho_0\|_{L^\infty}\exp\lt( \|\nabla_x u_0\|_{L^\infty} + 1 \rt).
\]
Combining all of the above observations, we obtain
\begin{align}\label{est_f1}
\begin{aligned}
&\sup_{0 \leq t \leq T}\lt( \|\rho(\cdot,t)\|_{H^1} + \|\nabla_x u(\cdot,t)\|_{L^\infty} + \|\nabla_x^2 u(\cdot,t)\|_{L^2} \rt)\cr
&\qquad \leq C\exp\lt(\|\rho_0\|_{H^1} + \|\nabla_x u_0\|_{L^\infty} + \|\nabla_x^2 u_0\|_{L^2} \rt)
\end{aligned}
\end{align}
for $\gamma \geq \gamma_*$ and $\kappa \leq \kappa_*$. We also easily estimate
\[
\sup_{0 \leq t \leq T}\|u(\cdot,t)\|_{L^2(B(0,R))} \leq C\|u_0\|_{L^2(B(0,R))}.
\]
For $0 \leq k \leq s$, we find
\bq\label{est_rho1}
\frac{d}{dt} \|\nabla_x^k \rho\|_{L^2} \leq C\|\nabla_x^k \rho\|_{L^2} + C\|\nabla_x^k u\|_{L^2}\|\nabla_x \rho\|_{L^\infty} + C\|\nabla_x^{k+1}u\|_{L^2}
\eq
and
\bq\label{est_u1}
\frac{d}{dt} \|\nabla_x^{k+1} u\|_{L^2} \leq C\|\nabla_x^{k+1}u\|_{L^2} + C\|\nabla_x^{k+1}(\nabla_x W \star \rho)\|_{L^2}.
\eq
Then we have from \eqref{est_u1}
$$\begin{aligned}
\frac{d}{dt} \|\nabla_x^2 u\|_{H^{[d/2]}} &\leq C\|\nabla_x^2 u\|_{H^{[d/2]}} + C\sum_{1 \leq k \leq [d/2]+1}\|\nabla_x^{k+1}(\nabla_x W \star \rho)\|_{L^2}\cr
&\leq C\|\nabla_x^2 u\|_{H^{[d/2]}} + C\sum_{1 \leq k \leq [d/2]+1}\|\nabla_x^{k+1}W\|_{L^1}\|\nabla_x \rho\|_{L^2}\cr
&\leq C\|\nabla_x^2 u\|_{H^{[d/2]}} + C\|\nabla_x W\|_{\W^{[d/2]+
1,1}}\|\nabla_x \rho\|_{L^2}.
\end{aligned}$$
This together with \eqref{est_f1} implies
\bq\label{est_u2}
\sup_{0 \leq t \leq T}\|\nabla_x^2 u(\cdot,t)\|_{H^{[d/2]}} \leq C,
\eq
where $C$ depends on the initial data $(\rho_0, u_0)$, $\nabla_x W$, and $T$. We back to \eqref{est_rho1} to obtain
$$\begin{aligned}
\frac{d}{dt} \|\nabla_x^2 \rho\|_{H^{[d/2]}} &\leq C \|\nabla_x^2 \rho\|_{H^{[d/2]}} + C\|\nabla_x^2 u\|_{H^{[d/2]}}\|\nabla_x \rho\|_{L^\infty} + C\|\nabla_x^3 u\|_{H^{[d/2]}}\cr
&\leq C \|\nabla_x^2 \rho\|_{H^{[d/2]}} + C\|\nabla_x \rho\|_{H^{[d/2]+1}} + C\|\nabla_x^3 u\|_{H^{[d/2]}}\cr
&\leq C + C\|\nabla_x^2 \rho\|_{H^{[d/2]}} + C\|\nabla_x^3 u\|_{H^{[d/2]}},
\end{aligned}$$
where we used \eqref{est_u2} and \eqref{est_f1}. It also follows from \eqref{est_u1} that
$$\begin{aligned}
\frac{d}{dt}\|\nabla_x^3 u\|_{H^{[d/2]}} &\leq C\|\nabla_x^3 u\|_{H^{[d/2]}} + \|\nabla_x W\|_{\W^{[d/2]+ 1,1}}\|\nabla_x^2 \rho\|_{L^2}\cr
&\leq C\|\nabla_x^3 u\|_{H^{[d/2]}} + \|\nabla_x^2 \rho\|_{H^{[d/2]}}.
\end{aligned}$$
Combining the above two differential inequalities yields
\[
\frac{d}{dt}\lt(\|\nabla_x^2 \rho\|_{H^{[d/2]}} + \|\nabla_x^3 u\|_{H^{[d/2]}} \rt) \leq C + C\lt(\|\nabla_x^2 \rho\|_{H^{[d/2]}} + \|\nabla_x^3 u\|_{H^{[d/2]}} \rt),
\]
and subsequently we find
\[
\sup_{0 \leq t \leq T}\lt(\|\rho(\cdot,t)\|_{H^{[d/2]+2}} + \|\nabla_x^2 u(\cdot,t)\|_{H^{[d/2]+1}}\rt) \leq C,
\]
where $C > 0$ depends on the initial data $\|\rho_0\|_{H^{[d/2]+2}}$, $\|\nabla_x^2 u_0\|_{H^{[d/2]+1}}$, $\|\nabla_x u_0\|_{L^\infty}$, $\|\nabla_x W\|_{\W^{[d/2]+ 1,1}}$, and $T$. We next estimate \eqref{est_rho1} and \eqref{est_u1} as 
\[
\frac{d}{dt}\lt(\|\nabla_x^k \rho\|_{L^2} + \|\nabla_x^{k+1} u\|_{L^2}\rt) \leq C\lt(\|\nabla_x^k \rho\|_{L^2} + \|\nabla_x^{k+1} u\|_{L^2}\rt) + C\|\nabla_x^k u\|_{L^2},
\]
where we used
\[
\|\nabla_x^{k+1}(\nabla_x W \star \rho)\|_{L^2} \leq \|\nabla_x^2 W\|_{L^1}\|\nabla_x^k \rho\|_{L^2} \quad \mbox{and} \quad \|\nabla_x \rho\|_{L^\infty} \leq C\|\nabla_x \rho\|_{H^{[d/2]+1}} \leq C.
\]
By summing it over $2 \leq k \leq s$ and applying Gronwall's inequality to the resulting differential inequality, we finally have
\[
\sup_{0 \leq t \leq T}\lt(\|\nabla_x^2 \rho(\cdot,t)\|_{H^{s-2}} + \|\nabla_x^2 u(\cdot,t)\|_{H^{s-1}}\rt) \leq C,
\]
where $C > 0$ depends on the initial data $\|\rho_0\|_{H^s}$, $\|\nabla_x^2 u_0\|_{H^{s-1}}$, $\|\nabla_x u_0\|_{L^\infty}$, $\|\nabla_x W\|_{\W^{[d/2]+ 1,1}}$, and $T$. Combining all of the above discussion concludes the desired result.
\end{proof}


\section*{Acknowledgements}
JAC was partially supported by the EPSRC grant number EP/P031587/1. YPC was supported by NRF grant(No. 2017R1C1B2012918 and 2017R1A4A1014735) and POSCO Science Fellowship of POSCO TJ Park Foundation. 

%
%
%
%


\begin{thebibliography}{10}
\bibitem{AGS08} L. Ambrosio, N. Gigli, and G, Savar\'e, Gradient flows: in metric spaces and in the space of probability measures, Springer Science \& Business Media, 2008.

\bibitem{BCLR} D. Balagu\'e, J. A. Carrillo, T. Laurent, and G. Raoul, Nonlocal interactions by repulsive-attractive potentials: Radial ins/stability, Phys. D, 260, (2013), 5--25. 

\bibitem{BCL} A. L. Bertozzi, J. A. Carrillo, and T. Laurent, Blow-up in multidimensional aggregation equations with mildly singular interaction kernels, Nonlinearity, 22, (2009), 683--710.

\bibitem{BLR} A. L. Bertozzi, T. Laurent, and J. Rosado, {$L\sp p$} theory for the multidimensional aggregation equation, Comm. Pure Appl. Math., 64, (2011), 45--83.

\bibitem{CMV03} J. A. Carrillo, R. J. McCann, and C. Villani, Kinetic equilibration rates for granular media and related equations: entropy dissipation and mass transportation estimates, Rev. Mat. Iberoamericana, 19, (2003), 971--1018.

\bibitem{CCH} J. A. Carrillo, Y.-P. Choi, and M. Hauray, The derivation of swarming models: mean-field limit and {W}asserstein distances, In {\em Collective dynamics from bacteria to crowds}, volume 553 of CISM Courses and Lect., 1--46. Springer, Vienna, 2014.

\bibitem{CCTpre} J. A. Carrillo, Y.-P. Choi, and O. Tse, Convergence to equilibrium in Wasserstein distance for damped Euler equations with interaction forces, Commun. Math. Phys, 365, (2019), 329--361.

\bibitem{CCZ16} J. A. Carrillo, Y.-P. Choi, and E. Zatorska, On the pressureless damped Euler-Poisson equations with quadratic confinement: critical thresholds and large-time behavior, Math. Models Methods Appl. Sci., 26, (2016), 2311--2340.

\bibitem{CR} J. A. Carrillo and J. Rosado, Uniqueness of bounded solutions to aggregation equations by optimal transport methods, in European Congress of Mathematics(Eur. Math. Soc., 2010), 3--16. 

\bibitem{CIP} C. Cercignani, R. Illner, and M. Pulvirenti, The mathematical theory of dilute gases, Springer, New York, 1994. 

\bibitem{C16} Y.-P. Choi, Global classical solutions and large-time behavior of the two-phase fluid model, SIAM J. Math. Anal., 48, (2016), 3090--3122.

\bibitem{C16kin} Y.-P. Choi, Global classical solutions of the Vlasov-Fokker-Planck equation with local alignment forces, Nonlinearity, 29, (2016), 1887--1916.

\bibitem{CK16} Y.-P. Choi and B. Kwon, The Cauchy problem for the pressureless Euler/isentropic Navier-Stokes equations, J. Differential Equations, 261, (2016), 654--711.

\bibitem{CYpre} Y.-P. Choi and S.-B. Yun, Existence and hydrodynamic limit for a Paveri-Fontana type kinetic traffic model, preprint.

\bibitem{CG} J. F. Coulombel and T. Goudon, The strong relaxation limit of the multidimensional isothermal Euler equations, Trans. Amer. Math. Soc., 359, (2007), 637--648.

\bibitem{CS} F. Cucker, S. Smale, Emergent behavior in flocks, IEEE Trans. Automat. Control, 52, (2007), 852--862.

\bibitem{FS15} R. Fetecau and W. Sun, First-order aggregation models and zero inertia limits, J. Differential Equations, 259, (2015), 6774--6802.

\bibitem{FST16} R. Fetecau, W. Sun, and C. Tan, First order aggregation models with alignment, Physica D, 325, (2016), 146--163.

\bibitem{FK19} A. Figalli and M.-J. Kang, A rigorous derivation from the kinetic Cucker-Smale model to the pressureless Euler system with nonlocal alignment, Anal. PDE, 12, (2019), 843--866.

\bibitem{GLT} J.~Giesselmann, C.~Lattanzio, A.~ E.~Tzavaras, Relative energy for the Korteweg theory and related Hamiltonian flows in gas dynamics,  Arch. Ration. Mech. Anal., 223, (2017), 1427--1484.

\bibitem{Jabin00} P.-E. Jabin, Macroscopic limit of Vlasov type equations with friction, Ann. Inst. Henri Poincare (C) Non Linear Anal., 17, (2000), 651--672.

\bibitem{KMT13} T. Karper, A. Mellet and K. Trivisa, Existence of weak solutions to kinetic flocking models, SIAM Math. Anal. 45, (2013), 215--243.

\bibitem{K73} T. Kato, Linear evolution equations of ``hyperbolic'' type II, J. Math. Soc. Japan, 25, (1973), 648--666.

\bibitem{KMT15} T. K. Karper, A. Mellet, and K. Trivisa, Hydrodynamic limit of the kinetic Cucker-Smale flocking model, Math. Models Methods Appl. Sci., 25, (2015), 131--163.

\bibitem{LT} C. Lattanzio and A. E. Tzavaras, Relative energy for the Korteweg theory and related Hamiltonian flows in gas dynamics,  Comm. Partial Differential Equations, 42, (2017), 261--290.

\bibitem{LC} C. Lin and J.-F. Coulombel, The strong relaxation limit of the multidimensional Euler equations, NoDEA Nonlinear Differential Equations Appl., 20, (2013), 447--461.

\bibitem{MM} P. Marcati and A. Milani, The one-dimensional {D}arcy's law as the limit of a compressible Euler flow, J. Differential Equations, 84, (1990), 129--147.

\bibitem{MT} S. Motsch, E. Tadmor, A new model for self-organized dynamics and its flocking behavior, J. Stat. Phys., 144, (2011), 923--947.

\bibitem{NPS01} J. Nieto, F. Poupaud, and J. Soler, High-field limit for the Vlasov-Poisson-Fokker-Planck system, Arch. Ration. Mech. Anal., 158, (2001), 29--59.

\bibitem{Pou02} F. Poupaud, Diagonal defect measures, adhesion dynamics and Euler equation, Methods Appl. Anal., 9, (2002), 533-562.

\bibitem{Vil08} C. Villani, Optimal transport: old and new, Vol. 338. Springer Science \& Business Media, 2008.

\bibitem{Vilkinetic} C. Villani, A review of mathematical topics in collisional kinetic theory Handbook of Mathematical Fluid Dynamics vol I (Amsterdam: North-Holland), 71--305, 2002.
\end{thebibliography}
\end{document}